\newtheorem{definition}{Definition}
\newtheorem{example}{Example}
\newtheorem{lemma}{Lemma}
\newtheorem{theorem}{Theorem}
\newtheorem{corollary}{Corollary}
\newtheorem{remark}{Remark}
\newtheorem{proposition}{Proposition}
\begin{document}
	
	\title{Convergence analysis of oversampled collocation boundary element methods in 2D\thanks{The work of the first author was supported by the UK Engineering and Physical Sciences Research Council (EPSRC) grant EP/L016516/1 for the University of Cambridge Centre for Doctoral Training, the Cambridge Centre for Analysis. The second author was supported by KU Leuven IF project C14/15/055.}}	
		\author[1]{Georg Maierhofer }
	\affil[1]{Department of Applied Mathematics and Theoretical Physics\\
		University of Cambridge\\ United Kingdom\\g.maierhofer@maths.cam.ac.uk \vspace{1em}}
	\author[2]{Daan Huybrechs}	
	\affil[2]{Department of Computer Science\\
		KU Leuven\\ Belgium\\
		daan.huybrechs@kuleuven.be}	
	
	\maketitle
	
	\begin{abstract}
		Collocation boundary element methods for integral equations are easier to implement than Galerkin methods because the elements of the discretization matrix are given by lower-dimensional integrals. For that same reason, the matrix assembly also requires fewer computations. However, collocation methods typically yield slower convergence rates and less robustness, compared to Galerkin methods. We explore the extent to which oversampled collocation can improve both robustness and convergence rates. We show that in some cases convergence rates can actually be higher than the corresponding Galerkin method, although this requires oversampling at a faster than linear rate. In most cases of practical interest, oversampling at least lowers the error by a constant factor. This can still be a substantial improvement: we analyze an example where linear oversampling by a constant factor $J$ (leading to a rectangular system of size $JN \times N$) improves the error at a cubic rate in the constant $J$. Furthermore, the oversampled collocation method is much less affected by a poor choice of collocation points, as we show how oversampling can lead to guaranteed convergence. Numerical experiments are included for the two-dimensional Helmholtz equation.
	\end{abstract}

{\small\paragraph{Keywords} Fredholm Integral Equations {\Large$\cdot$} Collocation Methods {\Large$\cdot$} Least-squares oversampling {\Large$\cdot$}\newline\begin{flushright} \vspace{-0.6cm}Convergence Analysis\ \ \ \ \ \ \ \ \ \ \ \ \ \ \ \ \ \ \ \ \ \ \ \ \ \ \ \ \ \ \ \ \ \ \ \ \ \ \ \ \ \ \ \ \ \ \ \ \ \ \ \ \ \ \ \ \ \ \ \ \ \ \ \ \ \ \ \ \ \ \ \ \ \ \ \ \ \ \ \ \ \ \ \ \ \ \ \ \ \ 
	\end{flushright}
\paragraph{Mathematics Subject Classification (2020)}45B05 {\Large$\cdot$} 65N35}

\newpage
\section{Introduction}

Over the recent decade, the concept of oversampling (i.e. taking more observations than the dimension of the trial space) has found increasing attention in numerical analysis as a method to achieve enhanced reconstruction and function approximation. In a range of settings, it is now understood that the effects of suboptimal observations can be mitigated and reconstructions stabilised by introducing a sufficient number of additional observations. As one of the first works in this direction we mention Adcock and Hansen \cite{adcock2012357} who found that oversampling provides a suitable paradigm for function approximation by sampling from a Riesz basis in a Hilbert space, even when the sampling and trial spaces are distinct. It was then shown by Adcock et al. \cite{adcock2014} that oversampling can be used for equispaced Fourier extensions to achieve superalgebraic convergence in a numerically stable manner (where it is known that no method for the Fourier extension problem can be both numerically stable and exponentially convergent). More recently oversampling was studied in the context of function approximation using frames by Adcock and Huybrechs \cite{FNA2} who found it can lead to much-improved accuracy in the approximation and help further mitigate ill-conditioning arising from using a redundant set rather than a basis if an appropriate regularisation is used.

In the present work, we are interested in the study of least-squares oversampling in collocation methods for Fredholm integral equations. These integral equations are of particular practical interest due to the boundary integral method which transforms linear partial differential equations on a domain to Fredholm integral equations on the boundary (see Colton \& Kress \cite{colton2013integral} and Hackbusch \cite{hackbusch2012integral}). The use of collocation to solve these types of integral equations is generally speaking a delicate matter. On the one hand, the discretization matrix entries are given by lower-dimensional integrals, which makes the methods easier to implement and permits the use of a wider range of techniques for numerical integration (see for instance Dea\~no et al. \cite{deano2017computing}, Gibbs et al. \cite{gibbs2020} and Maierhofer et al. \cite{maierhofer2020extended}). On the other hand, at present no general framework for the convergence analysis of collocation methods exists. This is in stark contrast to Galerkin methods, for which there is a well-known and wide-ranging convergence theory based mainly on the coercivity of associated bilinear forms. Even though a general framework for the convergence analysis of collocation methods is not available, the literature offers a number of deep insights into convergence properties in specific settings. One of the most complete studies by Arnold \& Wendland \cite{arnold1983asymptotic} provides guarantees for optimal convergence rates for integral equations on 2D smooth Jordan curves for odd degree spline approximations. Their work is based on a coercivity assumption of the integral operator with respect to the inner product on Sobolev spaces $H^j(\Gamma),j\geq 1$, and shows that the corresponding collocation methods are convergent, albeit at a slower rate than the associated Galerkin methods. These estimates were further extended to even degree splines on Jordan curves (subject to a pseudo-differential form of the integral operator) by Saranen \& Wendland \cite{saranen1985asymptotic}. A unified analysis of spline collocation for strongly elliptic boundary integral and integro-differential operators is given by Arnold \& Wendland \cite{Arnold1985}. Recently there have also been some advances in the analysis of collocation methods for integral equations on higher dimensional spaces, although these have been restricted to biperiodic spaces where the use of Fourier series is available, such as the work by Arens \& R\"osch \cite{arens2016} for integral operators with weakly singular kernels whose singularity can be removed by a transformation to polar coordinates.

This downside of the decreased convergence rate of collocation methods has attracted further research by Sloan \cite{sloan1988quadrature}, Sloan \& Wendland \cite{sloan1989quadrature} and Chandler \& Wendland \cite{chandler1990}, who developed the so-called qualocation method (or `quadrature-modified collocation method') which essentially expresses the inner product in the Galerkin equations through a discrete quadrature rule that is specifically chosen to optimize the convergence rate of the overall method. While the results are very promising - already a linear amount of oversampling in the quadrature points with appropriate weights leads to superconvergence at the same rate as the Galerkin method - their results are highly specific to the setting of equispaced sampling and spline spaces on smooth domains in 2D.

A further level of discretisation from qualocation methods leads to fully discrete methods, for instance Nystr\"om methods as described by Bremer \& Gimbutas \cite{bremer2012} and by Hao et al. \cite{hao2014high}, in which a discrete representation of the integral operator is typically chosen with great care and typically tailored quadrature schemes are incorporated to ensure good convergence properties of the overall method. This means often the discretisation needs to be adapted to the type of integral equation (and specifically the singular kernel of the integral operator) at hand. Similar to collocation methods there is no unified framework for the analysis of Nystr\"om methods and rigorous convergence studies exist only for isolated cases, see for instance the work of Bruno et al. \cite{bruno2013convergence}.

In the present work, we raise the question of how one may improve the convergence properties of collocation methods by introducing oversampling without having to choose the collocation points optimally. Intuitively one might expect that, if used appropriately, any information obtained from additional collocation points can help improve the quality of the approximate solution. One should expect that this is the case even when sampling and basis are not perfectly matched, or when collocation points are chosen suboptimally. This observation was recently verified for a number of practical settings involving wave scattering problems (see for instance Barnett \& Betcke \cite{barnett2008stability}, Gibbs et al. \cite{gibbs2020} and Huybrechs \& Olteanu \cite{HUYBRECHS201992}). In the current manuscript, we aim to provide an introductory but rigorous analysis of the least-squares oversampled collocation method. Although the results focus on some specific cases for 2D integral equations, we believe the analysis highlights important reasons why and how oversampling works (since it provides an approximation to a Bubnov--Galerkin method) and what the determining factor for optimal rates of oversampling in the integral equation setting is (the quality of approximation in the corresponding trapezoidal rule in the relevant function spaces).

\subsection{Main results and outline of the paper}

\begin{figure}[ht]
\begin{subfigure}{0.65\textwidth}
	\centering
	\includegraphics[width=0.99\linewidth]{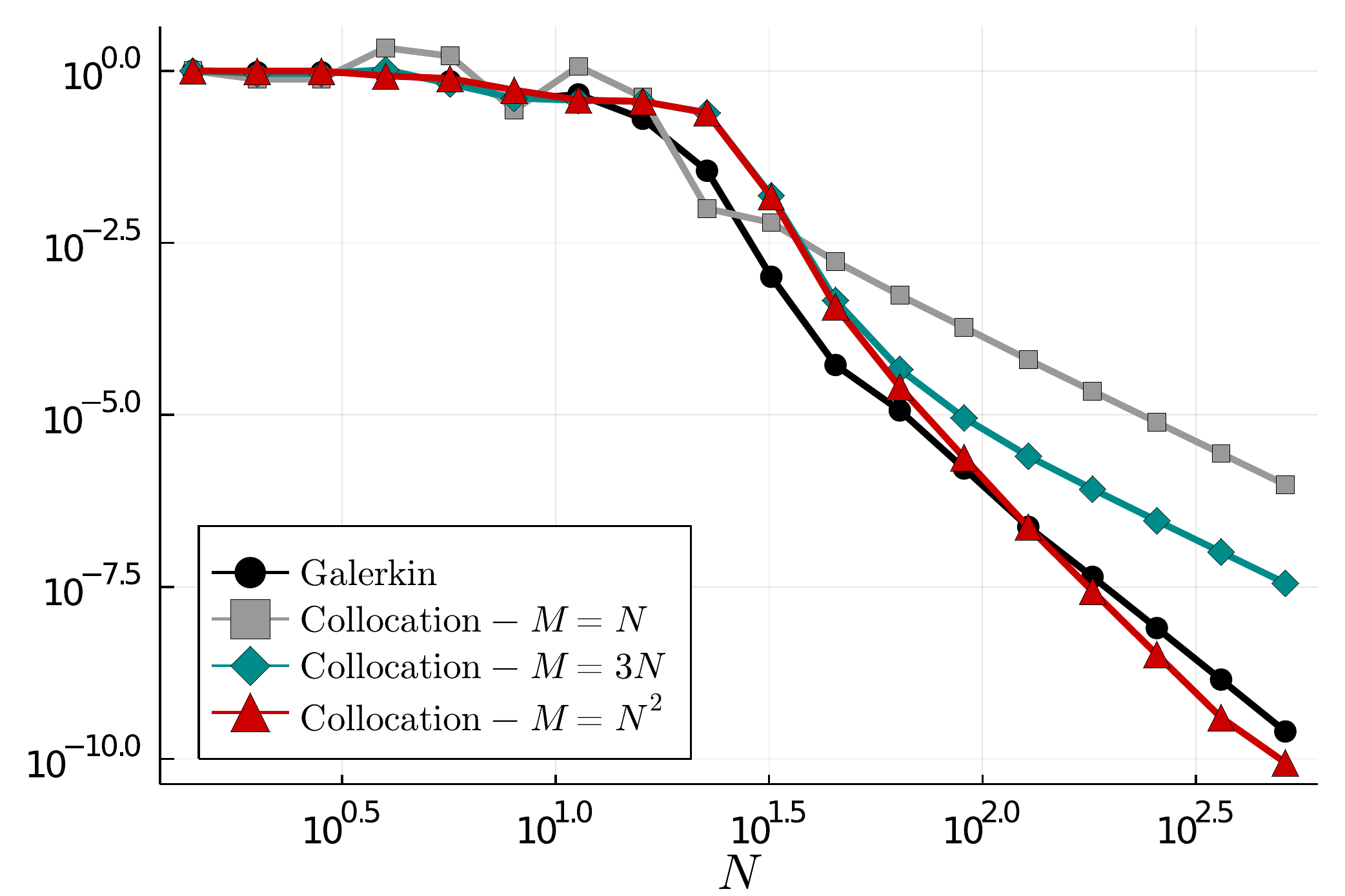}
	\caption{Convergence rates in the field point.}
	\end{subfigure}%
	\begin{subfigure}{0.35\textwidth}
	\centering
	\vspace{0.5cm}
	\includegraphics[width=0.99\linewidth]{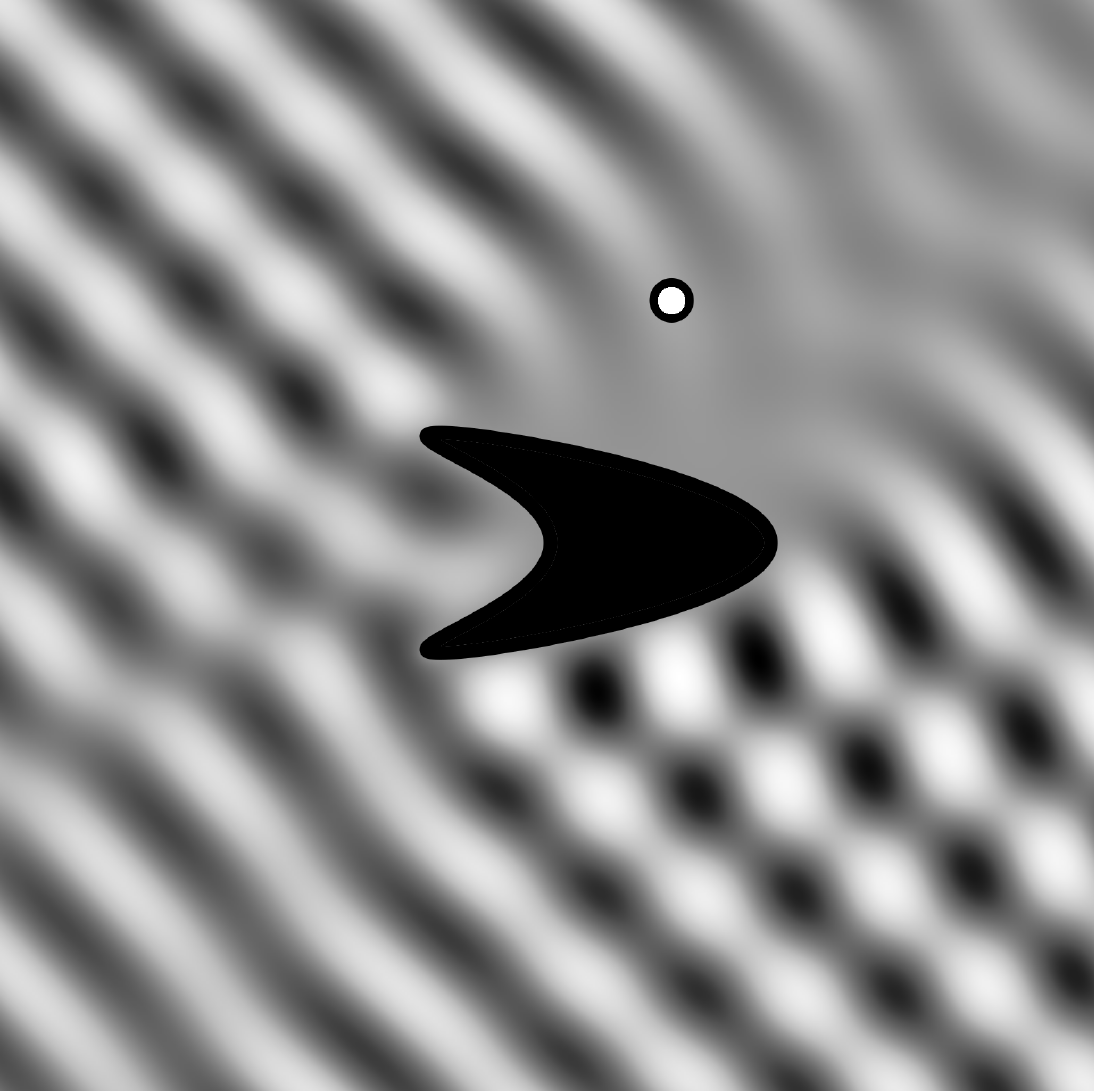}
	\vspace{0.225cm}
	\caption{The geometry and field point.}
	\label{fig:kite_geometry}
	\end{subfigure}
	\caption{The improved convergence properties of oversampled collocation on a smooth scatterer (error in a field point as shown in the right panel). The error for collocation with linear oversampling ($M=3N$) initially follows the Galerkin rate, and eventually follows the standard asymptotic collocation rate ($M=N$) with a  smaller constant. We will show that faster-than-linear oversampling may result in higher asymptotic convergence rates, slightly higher even than Galerkin in this example ($M=N^2$).}
	\label{fig:intro_example_singlelayeroversamplingfieldpointkite}
\end{figure}

The main theoretical results of the paper are Theorems \ref{thm:convergence_in_energy_space}, \ref{thm:discrete_aubin_nitsche} and \ref{thm:convergence_rates_equispaced_grids}. They are briefly illustrated with the numerical results in Fig.~\ref{fig:intro_example_singlelayeroversamplingfieldpointkite}, and more elaborately in \S\ref{sec:numerical_results}.

We introduce the general formulation of the oversampled collocation method together with necessary mathematical assumptions for later analysis in \S\ref{sec:oversapled_collocation_method}. This is followed by a rigorous convergence analysis of the method in \S\ref{sec:convergence_analysis_of_oversampled_collocation}. In Theorem~\ref{thm:convergence_in_energy_space} we prove convergence of the oversampled collocation method in the energy space for a broad class of boundary integral operators on smooth domains, subject to using certain regular boundary element spaces that include commonly used basis functions such as piecewise polynomials (Definition~\ref{def:approximation_property} and Definition~\ref{def:inverse_property}). The convergence is related to a quadrature error estimate in a particular form \eqref{eqn:discrete_inner_product_equispaced_points}, which can be defined and analyzed for arbitrary sets of collocation points. Here, the main point is that the quadrature error may not be small when $M=N$, but it can be made arbitrarily small by increasing $M$ relative to $N$ with minimal assumptions on the points. Though the dimension can be arbitrary, our examples are limited to integral equations on $2D$ domains. We illustrate the robustness in \S\ref{sec:numerical_results} by choosing random points.

In Theorem~\ref{thm:discrete_aubin_nitsche} we analyze convergence rates and show that the density function computed by an oversampled collocation method may converge at optimal rates in a range of Sobolev spaces, depending on the amount of oversampling. This result represents an extension of the so-called Aubin--Nitsche lemma and shows that the lowest possible order of the Sobolev spaces in this range dictates the highest order of convergence of the corresponding error at a field point. This demonstrates superconvergence properties of the oversampled collocation method, though only for domains with a smooth boundary and with more than linear oversampling. The latter is, of course, computationally less desirable as it increases the computational complexity of the solver.

Finally, in Theorem~\ref{thm:convergence_rates_equispaced_grids}, based on earlier results by Sloan, Chandler and Wendland (\cite{chandler1990,sloan1988quadrature,sloan1989quadrature}) we quantify the impact of the oversampling factor in the computationally more favourable regime of linear oversampling, where $M = JN$ for constant $J$. The analysis is restricted to integral operators of a specific pseudo-differential form on smooth domains in 2D, using spline basis functions and matching (oversampled) equispaced collocation points. This setting is the most restrictive, but also the most explicit and shows that the error may decay quite rapidly with $J$. Though the asymptotic convergence rate in the linear oversampling regime remains unchanged compared to standard collocation, the constant involved decays rapidly like $J^{-3}$ when using linear splines for a first kind integral equation using the Laplace single layer potential on a circular domain.

This analysis is followed by a discussion of the results in more specific settings in \S\ref{sec:specific_settings}, including non-equispaced collocation points and Lipschitz domains. These and the aforementioned theoretical results are exhibited on a number of numerical examples in \S\ref{sec:numerical_results} and we conclude the manuscript with a summary of the main insights in this work and an outlook to future research in \S\ref{sec:conclusions}.

\section{An oversampled collocation method}
\label{sec:oversapled_collocation_method}

We first describe the overall set-up and notation before embarking on a more detailed analysis in \S\ref{sec:convergence_analysis_of_oversampled_collocation}. We are given an approximation (trial) space $S_N$, with $\dim S_N=N$, and a domain $\Gamma$ which we assume to be a plane Jordan curve with a regular parametrisation, i.e. it is the graph of  a continuous 1-periodic function 
\begin{align}
\begin{split}\label{eqn:parametrisation_of_boundary}
	z:[0,1)&\rightarrow \mathbb{R}^2\\
	t&\mapsto (x(t),y(t)).
\end{split}
\end{align}
The two main cases of concern in this paper are when $\Gamma$ is smooth (i.e. when $z$ is a diffeomorphism) and when $\Gamma$ is a polygon (i.e. $z$ is piecewise linear). We consider an integral equation on $\Gamma$ of the form 
\begin{align}\label{eqn:original_integral_equation}
	Vu=f,
\end{align}
where $V:\mathcal{H}_1(\Gamma)\rightarrow \mathcal{H}_2(\Gamma)$ is a continuous linear map between two Hilbert spaces $\mathcal{H}_1(\Gamma),\mathcal{H}_2(\Gamma)$ of functions on the boundary. Our method of solution is a \textit{least-squares oversampled collocation method}, whereby we choose $M\geq N$ distinct collocation points $x_m\in \Gamma, m=1,\dots,M$, and define an approximation $u_N^{(M)}$ to the true solution $\tilde{u}$ in the space $S_N=\mathrm{span}\left\{\phi_n\right\}_{n=1}^N$ as follows. We expand
\begin{align}\label{eqn:basis_function_expansion_least_squares_solution}
	u_N=\sum_{n=1}^N a_n \phi_n,
\end{align}
after which the collocation conditions at $x_m, m=1,\dots,M,$ provide an overdetermined $M\times N$ linear system for $\bm{a}$. Motivated by results in approximation theory \cite{FNA1,FNA2} we consider a weighted least-squares solution to this system, such that
\begin{align}\label{eqn:general_least_squares_system}
	\left(\mathcal{G}_{M,N}^\dagger\mathcal{W}_M\mathcal{G}_{M,N}\right)\bm{a}=\mathcal{G}_{M,N}^\dagger\mathcal{W}_M\bm{f}
\end{align}
where
\begin{align*}
	\bm{f}=\left(f(x_m)\right)_{m=1}^M, \quad \mathcal{G}_{M,N}=\left(V\phi_n(x_m)\right)_{m=1,n=1}^{M,N},\quad \mathcal{W}_M=\mathrm{diag}\left(\frac{1}{2}\int_{x_{j-1}}^{x_{j+1}}ds_\Gamma\right)_{m=1}^M.
\end{align*}
Here $\mathcal{W}_M$ is an $M\times M$ diagonal matrix with entries corresponding to distances between sampling points in arclength along $\Gamma$ and it is understood that $x_{N+1}=x_1$. This method reduces to the \textit{standard collocation method} when $M=N$, and in this work we are specifically interested in the convergence properties, as $N\rightarrow\infty$, of the method when $M=M(N)>N$.

\subsection{Mathematical assumptions}

We focus our attention on integral operators $V$ with the following mapping properties:
\begin{itemize}
\item $V$ is a continuous linear map
\begin{align}\label{eqn:continuity_of_V}
	V:H^{s+\alpha}(\Gamma)\rightarrow H^{s-\alpha}(\Gamma)
\end{align}
for some $\alpha\in\mathbb{R}$ and any $s\in\mathbb{R}$ with $|s|\leq s_0$, some fixed constant $s_0$.
\item The inverse of $V$ is a well-defined continuous linear map
\begin{align}\label{eqn:continuity_of_inverse}
	V^{-1}:H^{s-\alpha}(\Gamma)\rightarrow H^{s+\alpha}(\Gamma)
\end{align}
for any $s\in\mathbb{R}$ with $|s|\leq s_1$.
\end{itemize}
In the usual way we call $2\alpha$ the order of the operator. In the first instance, further on in \S\ref{sec:convergence_analysis_of_oversampled_collocation}, we will focus on the case when $\Gamma$ is smooth, which means through the parametrisation z (see \eqref{eqn:parametrisation_of_boundary}) there is a one-to-one correspondence between functions on $\Gamma$ and 1-periodic functions on $[0,1)$. Moreover, since $z$ is a diffeomorphism the Sobolev norms on $\Gamma$ are equivalent to the Sobolev norms of the mapped function on $[0,1)$, i.e. for any $\alpha\in\mathbb{R}$ there is $C_\alpha$, such that
\begin{align*}
    C_{\alpha}^{-1}\|g\|_{H^{\alpha}(\Gamma)}\leq \|g\circ z\|_{H^{\alpha}([0,1))}\leq C_{\alpha}\|g\|_{H^{\alpha}(\Gamma)}.
\end{align*}
Thus from now on we shall limit ourselves to $\Gamma=[0,1)$, and unless mentioned otherwise it is understood that $H^t=H^t([0,1))$. In particular, on $[0,1)$, we use the following definitions of the Sobolev norm and the $L^2$-duality pairing for $f\in H^{s},g\in H^{-s}$:
\begin{align}\begin{split}\label{eqn:definition_sobolev_norms_in_terms_of_fourier_modes}
   \|f\|_{s}&=\|f\|_{H^s}=\|f\|_{H^s([0,1))}:=\left(|\hat{f}_0|^2+\sum_{0\neq m\in\mathbb{Z}}|m|^{2s}|\hat{f}_m|^2\right)^{1/2},\\
    \langle f,g\rangle&:=\overline{\hat{f}_0}\hat{g}_0+\sum_{0\neq m\in\mathbb{Z}}\overline{\hat{f}_m}\hat{g}_m=\int_0^1\overline{f(x)}g(x)dx,
    \end{split}
\end{align}
where
\begin{align*}
    \hat{f}_m:=\int_0^1e^{-2\pi i m t}f(t)dt.
\end{align*}
Note that in \S\ref{sec:discrete_aubin_nitsche_lemma} we will also make use of the expression
\begin{align*}
    \|f\|_{H^s}=\sup_{\substack{g\in C^\infty\\\|g\|_{H^{-s}}=1}}|\langle f,g\rangle|.
\end{align*}
Finally, in \S\ref{sec:polygonal_domains} we lift some of the above assumptions, but we will make clear at that point which properties still hold.

\begin{example}[Integral formulations of the Helmholtz equation, see \cite{colton2013integral}]\label{ex:Helmholtz_boundary_integral_formulation}
If we solve the exterior problem for the Helmholtz equation on some domain $\Omega$, with $\Gamma=\partial \Omega$, it is well-known that the Dirichlet problem
\begin{align*}
    \begin{cases}
    \Delta \phi +k^2 \phi=0,&\quad \mathbb{R}^2\setminus\overline{\Omega}\\
    \phi(x)=g(x),&\quad x\in \Gamma,
    \end{cases}
\end{align*}
can be formulated in terms of the single or double layer potential formulations:
\begin{align*}
\mathcal{S}u=f,\quad \left(\frac{1}{2}\mathcal{I}+\mathcal{D}\right)u=f,    
\end{align*}
where the single layer potential $\mathcal{S}$ and double layer potential operators $\mathcal{D}$ are given by
\begin{align*}
    (\mathcal{S}\phi)(x)&=\int_\Gamma G(x,y)\phi(y) ds_y,\quad x\in\Gamma,\\
    (\mathcal{D}\phi)(x)&=\int_\Gamma\frac{\partial G}{\partial n_y}(x,y)\phi(y)ds_y,\quad x\in\Gamma,
\end{align*}
where $G(x,y)=i/4H^{(1)}_0(k|x-y|)$ is the Green's function of the Helmholtz equation with wavenumber $k$. These integral operators have extensions to $H^r$ for any $r\in \mathbb{R}$ which satisfy, away from resonant frequencies, the above assumptions \eqref{eqn:continuity_of_V}\&\eqref{eqn:continuity_of_inverse} for any $s\in\mathbb{R}$ with orders of $2\alpha=-1$ and $2\alpha=0$ respectively.
\end{example}
We consider $(l,m)$-regular boundary element spaces $S_N=S_{h}^{l,m}\subset H^{m+1/2}$ satisfying the inverse assumption in the sense of Babu\v{s}ka \& Aziz \cite[Section 4.1]{babuska1992} (cf. also \cite[p. 38]{Hsiao2017}). These spaces are defined through two important properties:

\begin{definition}[Approximation property]\label{def:approximation_property} Let $t\leq s\leq l$ and $t<m+\frac{1}{2}$. Assume there exists a constant $c$ such that for any $v\in H^{s}(\Gamma)$, a sequence $\chi_h\in S_{h}^{l,m}$ exists and satisfies the estimate
	\begin{align}\label{eqn:approximation_property}
		\|v-\chi_h\|_{H^{t}(\Gamma)}\leq c h^{s-t}\|v\|_{H^{s}(\Gamma)}.
		\end{align}
	The space $S_h^{l,m}$ is called $(l,m)$-regular if for any fixed $\sigma<m+1/2$ there is such a sequence $\chi_h$ which satisfies \eqref{eqn:approximation_property} for all $t>\sigma$, i.e. which can be chosen independently of $t$.
\end{definition}

\begin{definition}[Inverse property]\label{def:inverse_property} For $t\leq s\leq m+\frac{1}{2}$, there exists a constant $C$ such that for all $\chi_h\in S_{h}^{l,m}$,
	\begin{align*}
		\|\chi_N\|_{H^{s}(\Gamma)}\leq C h^{t-s}\|\chi_N\|_{H^{t}(\Gamma)}.
	\end{align*}
\end{definition}
Note that these definitions introduce a parameter $h$, which typically tends to zero in a sequence of approximations. This is the parameter $h$ used in the convergence statements and in the conditions of the theorems further on in \S\ref{sec:convergence_analysis_of_oversampled_collocation}.

\begin{example}[Spline spaces on $\rho$-quasiuniform mesh, cf. {\cite[p. 359]{arnold1983asymptotic}}]\label{ex:splines_as_regular_boundary_element_spaces} A mesh
\begin{align*}
    \Delta_N=\left\{0=x_1<x_2<\cdots<x_N<1\right\}
\end{align*}
is called $\rho$-quasiuniform ($\rho>0$) if
	\begin{align*}
		\max_{1\leq j\leq N}|x_{j+1}-x_{j}|\leq \rho \min_{1\leq j\leq N}|x_{j+1}-x_{j}|
	\end{align*}
where it is understood that $x_{N+1}=x_{1}$. Splines of degree $d$ on a sequence of quasiuniform meshes $\Delta_N$, where $h_N=\max_{1\leq j\leq N}|x_{j+1}-x_j|$ are regular boundary element spaces on smooth periodic curves in the above sense, with $S_N=S(\Delta_N)=S_{h_N}^{d+1,d}$.
\end{example}

\subsection{From least-squares to a discrete Bubnov-Galerkin method}\label{sec:least-squares_to_bubnov-galerkin}

Oversampled collocation leads to a rectangular linear system and this system is solved in a least-squares sense, recall~\eqref{eqn:general_least_squares_system}. In view of the chosen weights, we will see that these normal equations have a continuous limit in the regime where $M \to \infty$ for fixed $N$. However, that limit differs from the classical Galerkin method of the same integral equation. The latter leads to the orthogonality conditions
\begin{equation}\label{eqn:classical_galerkin_orthogonality_conditions}
        \langle \chi_N,Vu_N\rangle=\langle\chi_N,f\rangle,\quad \forall\chi_N\in S_N.
\end{equation}

The central observation in the following analysis is that the least-squares system \eqref{eqn:general_least_squares_system} amounts to a discrete Bubnov-Galerkin method instead, in the following sense. Let $\Delta_M=\{0\leq x_1<x_2<\cdots<x_M<1\}$ be the collocation points and let $u_N^{(M)}\in S_N$ be the least-squares collocation approximation to the true solution $\tilde{u}$ in the sense of \eqref{eqn:basis_function_expansion_least_squares_solution}--\eqref{eqn:general_least_squares_system}, then we note that \eqref{eqn:general_least_squares_system} is equivalent to
\begin{align}\label{eqn:discrete_orthogonality_condition}
    \left\langle V\chi_N,Vu_N^{(M)}\right\rangle_{M}=\left\langle V\chi_N,V\tilde{u}\right\rangle_{M},\quad \forall \chi_N\in S_N,
\end{align}
where we defined the discrete inner product to be
\begin{align*}
    \left\langle f,g\right\rangle_M=\sum_{m=1}^M\frac{|x_{j+1}-x_{j-1}|}{2}\overline{f(x_j)}g(x_j),
\end{align*}
where it is understood that $|x_{N+1}-x_{N-1}|=x_{1}+1-x_{N-1}$, and we used the fact that on $\Gamma=[0,1)$ it is the case that  $\int_{x_{j-1}}^{x_{j+1}}ds_\Gamma=x_{j+1}-x_{j-1}$. The \textit{discrete orthogonality condition} \eqref{eqn:discrete_orthogonality_condition} plays a central role in the analysis of \S\ref{sec:convergence_analysis_of_oversampled_collocation}.

\begin{remark}\label{rem:large_oversampling_limit}
Note that, compared to the classical Galerkin orthogonality conditions~\eqref{eqn:classical_galerkin_orthogonality_conditions}, the discrete conditions \eqref{eqn:discrete_orthogonality_condition} feature an additional integral operator $V$ in the first argument of the inner product. This is the case both in the left hand side and in the right hand side: the integral equation is projected using the basis $\{V \chi_N\}_{\chi_N \in S_N}$ rather than $\{ \chi_N\}_{\chi_N \in S_N}$. If $V$ has negative order, then the former is smoother than the latter and that underlies some of the differences in convergence rates between Galerkin and the large oversampling limit of the collocation method.
\end{remark}

\begin{remark}
One might also wish to consider a discretisation of the standard Galerkin method \eqref{eqn:classical_galerkin_orthogonality_conditions}, which would result in the discrete orthogonality conditions
\begin{align}\label{eqn:modified_discrete_orthogonality_condition}
 \left\langle \chi_N,Vu_N^{(M)}\right\rangle_{M}=\left\langle \chi_N,V\tilde{u}\right\rangle_{M},\quad \forall \chi_N\in S_N.
\end{align}
We call this the modified oversampled collocation method because instead of weighted normal equations \eqref{eqn:general_least_squares_system} it can be formulated as
\begin{align*}
	\left(\mathcal{B}_{M,N}^\dagger\mathcal{W}_M\mathcal{G}_{M,N}\right)\bm{a}=\mathcal{B}_{M,N}^\dagger\mathcal{W}_M\bm{f}
\end{align*}
where
\begin{align*}
	\bm{f}&=\left(f(x_m)\right)_{m=1}^M, \quad \mathcal{G}_{M,N}=\left(V\phi_n(x_m)\right)_{m=1,n=1}^{M,N},\\
	\mathcal{B}_{M,N}&=\left(\phi_n(x_m)\right)_{m=1,n=1}^{M,N},\quad \mathcal{W}_M=\mathrm{diag}\left(\frac{1}{2}\int_{x_{j-1}}^{x_{j+1}}ds_\Gamma\right)_{m=1}^M.
\end{align*}
For this method, most of the analysis in the following sections carries through in a similar fashion, however some of the assumptions on $V$ such as invertibility are no longer sufficient to guarantee uniform ellipticity of the corresponding discrete forms and it is necessary to impose a discrete inf-sup assumption on the operator $V$ (similar to what would be required for the continuous Galerkin method \eqref{eqn:classical_galerkin_orthogonality_conditions}).
\end{remark}

\section{Convergence analysis of the oversampled collocation method}\label{sec:convergence_analysis_of_oversampled_collocation}
As mentioned above the central idea in the analysis is to regard the oversampled collocation method as a discrete version of a  Bubnov–Galerkin method, by which we mean the approximation $u_N\in S_N$ which is defined through the continuous orthogonality conditions
\begin{align}\label{eqn:bubnov_galerkin_method}
    \left\langle V\chi_N,Vu_N\right\rangle_{L^2}=\left\langle V\chi_N,f\right\rangle_{L^2},\quad \forall \chi_N\in S_N.
\end{align}
A combination of Strang-type estimates (cf. Theorem~4.1.1 and Theorem~4.2.2 in \cite{ciarlet2002finite}) and the error of the trapezoidal rule for the inner product allows us to study the convergence rates of our methods. For simplicity we begin by focusing on the case of smooth domains, and assume that
\begin{align}\label{eqn:mapping_properties_V}
	V: H^{s+\alpha}\rightarrow H^{s-\alpha}
\end{align}
is a continuous isomorphism for all $s\in\mathbb{R}$. If the integral equation arises as a boundary integral formulation of the boundary value problem for some partial differential equation, $\mathcal{L}\phi=f$, on a domain $\Omega\subset\mathbb{R}^2$ with $\Gamma=\partial \Omega$, we already saw a useful way of assessing the error in Fig.~\ref{fig:intro_example_singlelayeroversamplingfieldpointkite}. We can judge the error/convergence of the boundary integral method via the evaluation of the field at a point away from the boundary. In the case of Dirichlet or Neumann boundary value problems the field $\phi:\Omega\rightarrow \mathbb{C}$ is typically expressed in the form
\begin{align*}
    \phi(x)=\int_{\Gamma}k(x,y)u(y)ds_y,\quad x\in\Omega
\end{align*}
where $k(x,y)$ is a kernel function related to the Green's function of the partial differential operator $\mathcal{L}$. When $\mathcal{L}$ is elliptic $k(x,y)$ is a smooth function away from $x=y$, which means we can estimate, for any $t\in\mathbb{R}$,
\begin{align}\nonumber
    \left|\phi_N(x)-\phi(x)\right|&=\left|\int_{\Gamma}k(x,y)\left(u_N(y)-\tilde{u}(y)\right)ds_y\right|=\left|\left\langle k(x,\cdot),u_N-\tilde{u} \right\rangle_{L^2(\Gamma)}\right|\\\label{eqn:estimate_field_point_error_in_terms_of_sobolev_norms}
    &\leq \|k(x,\cdot)\|_{H^{t}(\Gamma)}\|\tilde{u}-u_N\|_{H^{-t}(\Gamma)},\quad x\in\Omega.
\end{align}
Since $y\mapsto k(x,y)\in C^\infty(\Gamma)$ for all $x\in \Omega$ the convergence rate of the approximation $\phi_N(x)$ to $\phi(x)$ is governed by the fastest convergence of $\|\tilde{u}-u_N\|_{H^{-t}(\Gamma)}$ in any boundary Sobolev norm for $t\in\mathbb{R}$. We shall see in the coming two sections that optimal convergence rates can be obtained in low order Sobolev spaces for sufficient amounts of oversampling. It will become apparent in \S\ref{sec:polygonal_domains} that the smoothness of the domain boundary affects the range of Sobolev spaces one can consider.

\subsection{Strang estimate for convergence on energy space}
 The first step in the analysis is to show that already a small amount of oversampling is sufficient to guarantee convergence of the \textit{least-squares oversampled collocation method} on the energy space $H^{2\alpha}$. As before, we let $\Delta_M=\{0\leq x_1<\dots<x_M<1\}$ be the collocation points. As the dimension $N$ of the approximation space $S_N=S_{h}^{l,m}$ increases, we also increase the number of collocation points $M=M(N)$ in a predefined way and we assume that there is an error estimate for the proximity of the discrete inner product to a continuous $L^2$ pairing in the following form for some $r,s>0$:
\begin{align}\label{eqn:general_error_estimate_trapezoidal_rule}
    |\langle g,f\rangle-\langle g,f\rangle_M|\leq \mathcal{E}_{r,s}(\Delta_M)\left(\|f\|_r\|g\|_s+\|f\|_s\|g\|_r\right)
\end{align}
for any $f,g\in H^{\max\{s,r\}}$, where $\mathcal{E}_{r,s}(\Delta_M)>0$ is independent of $f,g$. We will derive such error estimates in a number of settings further on.

\begin{proposition}\label{prop:uniform_ellipticity}
If \eqref{eqn:general_error_estimate_trapezoidal_rule} holds for $s,r$ with $\max\{r,s\}<m+1/2-2\alpha$, and if $\Delta_M=\Delta_{M(N)}$ is chosen such that
\begin{align*}
    \lim_{N\rightarrow\infty}\mathcal{E}_{r,s}(\Delta_M)h^{-(r+s)}=0,
\end{align*}
where $h=h(N)$ then the bilinear forms
\begin{align*}
    (f,g)\mapsto \left\langle Vf,Vg\right\rangle_M
\end{align*}
are uniformly $S_N$-elliptic, meaning they satisfy
\begin{align*}
    \gamma\|\chi_N\|_{H^{2\alpha}}^2\leq \left\langle V\chi_N,V\chi_N\right\rangle_M,\quad \forall\chi_N\in S_N,
\end{align*}
for some constant $\gamma$ independent of $N$.
\end{proposition}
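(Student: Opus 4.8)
The plan is to treat the discrete form $(f,g)\mapsto\langle Vf,Vg\rangle_M$ as a perturbation of the continuous form $(f,g)\mapsto\langle Vf,Vg\rangle_{L^2}$, for which $S_N$-ellipticity is immediate, and then to control the quadrature error using \eqref{eqn:general_error_estimate_trapezoidal_rule} together with the inverse property of Definition~\ref{def:inverse_property}. First I would record the continuous ellipticity. Since \eqref{eqn:mapping_properties_V} makes $V\colon H^{2\alpha}\to L^2$ a continuous isomorphism (taking the free index equal to $\alpha$), its inverse $V^{-1}\colon L^2\to H^{2\alpha}$ is bounded, and hence for every $\chi_N\in S_N$,
\begin{align*}
\|\chi_N\|_{2\alpha}=\left\|V^{-1}V\chi_N\right\|_{2\alpha}\leq\left\|V^{-1}\right\|\,\|V\chi_N\|_0,
\end{align*}
so that $\langle V\chi_N,V\chi_N\rangle_{L^2}=\|V\chi_N\|_0^2\geq\gamma_0\|\chi_N\|_{2\alpha}^2$ with $\gamma_0=\|V^{-1}\|^{-2}$, a constant independent of $N$.

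Next I would estimate the difference between the discrete and continuous forms. Applying \eqref{eqn:general_error_estimate_trapezoidal_rule} with $f=g=V\chi_N$ gives
\begin{align*}
\left|\langle V\chi_N,V\chi_N\rangle_{L^2}-\langle V\chi_N,V\chi_N\rangle_M\right|\leq 2\,\mathcal{E}_{r,s}(\Delta_M)\,\|V\chi_N\|_r\,\|V\chi_N\|_s.
\end{align*}
The continuity \eqref{eqn:continuity_of_V} of $V$ (each application raising the Sobolev order by $2\alpha$ when read in reverse) yields $\|V\chi_N\|_r\leq C\|\chi_N\|_{r+2\alpha}$ and $\|V\chi_N\|_s\leq C\|\chi_N\|_{s+2\alpha}$. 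The crucial step is then to absorb these higher-order norms: since $\max\{r,s\}<m+1/2-2\alpha$, both indices $r+2\alpha$ and $s+2\alpha$ lie strictly below $m+1/2$, so Definition~\ref{def:inverse_property} applies with lower index $2\alpha$ and gives
\begin{align*}
\|\chi_N\|_{r+2\alpha}\leq C\,h^{-r}\|\chi_N\|_{2\alpha},\qquad \|\chi_N\|_{s+2\alpha}\leq C\,h^{-s}\|\chi_N\|_{2\alpha}.
\end{align*}
Combining these bounds produces a quadrature error of the form $C'\,\mathcal{E}_{r,s}(\Delta_M)\,h^{-(r+s)}\|\chi_N\|_{2\alpha}^2$, with $C'$ independent of $N$ and $\chi_N$.

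Finally I would close the argument using the hypothesis $\lim_{N\to\infty}\mathcal{E}_{r,s}(\Delta_M)h^{-(r+s)}=0$: for all $N$ large enough the quadrature error is at most $\tfrac12\gamma_0\|\chi_N\|_{2\alpha}^2$, whence
\begin{align*}
\langle V\chi_N,V\chi_N\rangle_M\geq\langle V\chi_N,V\chi_N\rangle_{L^2}-\tfrac12\gamma_0\|\chi_N\|_{2\alpha}^2\geq\tfrac12\gamma_0\|\chi_N\|_{2\alpha}^2,
\end{align*}
giving uniform $S_N$-ellipticity with $\gamma=\gamma_0/2$.

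The main obstacle, and the place where the precise hypotheses are used, is the index bookkeeping in the middle step: one must verify that the two factors of $V$ raise the Sobolev order by exactly $2\alpha$ relative to $\chi_N$ and that the resulting orders $r+2\alpha$, $s+2\alpha$ remain within the admissible range of the inverse property, which is exactly what the strict condition $\max\{r,s\}<m+1/2-2\alpha$ guarantees; this is also what produces the factor $h^{-(r+s)}$ that the limit hypothesis is designed to annihilate. A secondary point is the uniformity of $\gamma$ in $N$, which holds because $\gamma_0$ derives from the $N$-independent quantity $\|V^{-1}\|$ and the smallness of the perturbation is guaranteed once $N$ exceeds a fixed threshold.
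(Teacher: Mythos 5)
Your proposal is correct and follows essentially the same route as the paper's proof: both rest on the continuous ellipticity coming from the boundedness of $V^{-1}\colon L^2\to H^{2\alpha}$, apply \eqref{eqn:general_error_estimate_trapezoidal_rule} with $f=g=V\chi_N$ together with the mapping properties of $V$ and the inverse property to produce the factor $\mathcal{E}_{r,s}(\Delta_M)h^{-(r+s)}\|\chi_N\|_{2\alpha}^2$, and then absorb this perturbation for $N$ large using the limit hypothesis. The only difference is cosmetic: you isolate the continuous ellipticity as a first step and perturb, whereas the paper starts from $\|\chi_N\|_{2\alpha}^2$ and moves the quadrature-error term to the left-hand side.
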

\begin{proof}
    By \eqref{eqn:mapping_properties_V} we know that for any $\chi_N\in S_N$
    \begin{align*}
        \|\chi_N\|_{H^{2\alpha}}
        &\leq \|V^{-1}\|_{2\alpha\rightarrow 0}^2\left(\left|\left\langle V\chi_N,V \chi_N\right\rangle_M\right|+\left|\left\langle V\chi_N,V \chi_N\right\rangle-\left\langle V\chi_N,V \chi_N\right\rangle_M\right|\right)\\
        &\leq \|V^{-1}\|_{2\alpha\rightarrow 0}^2\left(\left|\left\langle V\chi_N,V \chi_N\right\rangle_M\right|\right.\\
        &\left.\quad+2\mathcal{E}_{r,s}(\Delta_M)\|V\|_{r+2\alpha\rightarrow r}\|V\|_{s+2\alpha\rightarrow s}\|\chi_N\|_{r+2\alpha}\|\chi_N\|_{s+2\alpha}\right).
    \end{align*}
    Thus, by the inverse property of $S_N=S_h^{l,m}$ (Definition \ref{def:inverse_property}) we have, whenever $\max\{r,s\}<m+1/2-2\alpha$,
    \begin{align*}
        \|\chi_N\|_{r+2\alpha}\|\chi_N\|_{s+2\alpha}\leq Ch^{-(r+s)}\|\chi_N\|_{2\alpha}^2
    \end{align*}
    and therefore
    \begin{align*}
         (1-\tilde{C}\mathcal{E}_{r,s}(\Delta_M)h^{-(r+s)})\|\chi_N\|_{H^{2\alpha}}&\leq\|V^{-1}\|_{2\alpha\rightarrow 0}^2\left|\left\langle V\chi_N,V \chi_N\right\rangle_M\right|,
    \end{align*}
so if $\lim_{N\rightarrow\infty}\mathcal{E}_{r,s}(\Delta_M)h^{-(r+s)}=0$ the result follows.
\end{proof}
\begin{theorem}[Strang-type bound for convergence in $H^{2\alpha}$]\label{thm:convergence_in_energy_space}
	If \eqref{eqn:general_error_estimate_trapezoidal_rule} holds for $s,r$ with $\max\{r,s\}<m+1/2-2\alpha$, and $\Delta_M=\Delta_{M(N)}$ is chosen such that
\begin{align*}
    \lim_{N\rightarrow\infty}\mathcal{E}_{r,s}(\Delta_M)h^{-(r+s)}=0,
\end{align*}
then there is a constant $C>0$ independent of $M,N,u$ such that 
	\begin{align*}
		\|u_N^{(M)}-u\|_{H^{2\alpha}}\leq C h^{l-2\alpha}\|u\|_{H^{l}}.
	\end{align*}
	This means the optimal convergence rate in $H^{2\alpha}$ is achieved.
\end{theorem}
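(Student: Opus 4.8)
The plan is to treat the discrete orthogonality condition \eqref{eqn:discrete_orthogonality_condition} as a Galerkin scheme for the Hermitian bilinear form $a_M(f,g):=\langle Vf,Vg\rangle_M$ on $S_N$, and to run a Strang-type argument comparing it against the continuous form $a(f,g):=\langle Vf,Vg\rangle_{L^2}$. Since the exact solution $u$ satisfies $Vu=f$, the right-hand side of \eqref{eqn:discrete_orthogonality_condition} equals $a_M(\chi_N,u)$, so $u$ itself plays the role of the interpolated datum and the scheme is consistent. Proposition~\ref{prop:uniform_ellipticity} already supplies uniform $S_N$-ellipticity $\gamma\|\chi_N\|_{H^{2\alpha}}^2\le a_M(\chi_N,\chi_N)$, which makes $\mathcal{G}_{M,N}$ of full column rank and the discrete problem uniquely solvable; the remaining task is to convert ellipticity together with a consistency estimate into the claimed best-approximation bound.

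First I would fix the approximant: by the $(l,m)$-regularity in Definition~\ref{def:approximation_property} I choose a \emph{single} $\chi_h\in S_N$ satisfying $\|u-\chi_h\|_{H^t}\le c\,h^{l-t}\|u\|_{H^l}$ simultaneously for the indices $t\in\{2\alpha,\,r+2\alpha,\,s+2\alpha\}$; these are all admissible because $\max\{r,s\}<m+\tfrac12-2\alpha$ keeps them below $m+\tfrac12$. Setting $\psi_N:=u_N^{(M)}-\chi_h\in S_N$, ellipticity and the discrete orthogonality give $\gamma\|\psi_N\|_{2\alpha}^2\le a_M(\psi_N,\psi_N)=a_M(\psi_N,u-\chi_h)$, so the whole estimate reduces to bounding the single quantity $a_M(\psi_N,u-\chi_h)$.

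The core of the work is the splitting $a_M(\psi_N,u-\chi_h)=a(\psi_N,u-\chi_h)+(a_M-a)(\psi_N,u-\chi_h)$. The continuous term is handled directly by continuity of $V:H^{2\alpha}\to L^2$ and the approximation estimate at $t=2\alpha$, giving a bound $\le C\,h^{l-2\alpha}\|u\|_l\,\|\psi_N\|_{2\alpha}$. For the consistency term I would apply the quadrature estimate \eqref{eqn:general_error_estimate_trapezoidal_rule} with $f=V(u-\chi_h)$ and $g=V\psi_N$; then estimate $\|V\psi_N\|_r,\|V\psi_N\|_s$ via continuity of $V$ followed by the inverse property (Definition~\ref{def:inverse_property}), which is where the factors $h^{-r},h^{-s}$ enter, and estimate $\|V(u-\chi_h)\|_r,\|V(u-\chi_h)\|_s$ via continuity of $V$ and the approximation property at $t=r+2\alpha,\,s+2\alpha$, which contribute $h^{\,l-(r+2\alpha)}$ and $h^{\,l-(s+2\alpha)}$.

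The decisive cancellation, and the step I expect to be the main obstacle to carry out cleanly, is that these factors recombine into $\mathcal{E}_{r,s}(\Delta_M)\,h^{-(r+s)}\cdot h^{l-2\alpha}$, so the hypothesis $\mathcal{E}_{r,s}(\Delta_M)\,h^{-(r+s)}\to0$ makes the consistency term likewise $\le C\,h^{l-2\alpha}\|u\|_l\,\|\psi_N\|_{2\alpha}$ for $N$ large. Dividing through by $\|\psi_N\|_{2\alpha}$ yields $\|u_N^{(M)}-\chi_h\|_{2\alpha}\le C\,h^{l-2\alpha}\|u\|_l$, and a final triangle inequality with the approximation bound for $\|u-\chi_h\|_{2\alpha}$ delivers the theorem. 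The care required is in checking that the indices $r+2\alpha,\,s+2\alpha$ stay inside the admissible ranges for both the inverse property ($\le m+\tfrac12$) and the approximation property ($\le l$ and $<m+\tfrac12$) — exactly what $\max\{r,s\}<m+\tfrac12-2\alpha$ secures — and in using the $(l,m)$-regularity to justify that one approximant $\chi_h$ serves for all these indices at once.
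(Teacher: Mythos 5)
Your proposal is correct and follows essentially the same route as the paper's proof: uniform $S_N$-ellipticity from Proposition~\ref{prop:uniform_ellipticity} combined with the discrete orthogonality condition \eqref{eqn:discrete_orthogonality_condition}, a splitting of $\langle V(u-\chi_N),V(u_N^{(M)}-\chi_N)\rangle_M$ into its continuous part plus the quadrature error \eqref{eqn:general_error_estimate_trapezoidal_rule}, the inverse property applied to the $u_N^{(M)}-\chi_N$ factors and the approximation property to the $u-\chi_N$ factors, with the powers of $h$ recombining into $\mathcal{E}_{r,s}(\Delta_M)h^{-(r+s)}\cdot h^{l-2\alpha}$. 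The only (immaterial) difference is the order in which you divide by $\|u_N^{(M)}-\chi_N\|_{H^{2\alpha}}$ and invoke the approximation property.
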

\begin{proof}We begin with the following estimate on the energy space $H^{2\alpha}$:
\begin{align*}
\|u_{N}^{(M)}-u\|_{H^{2\alpha}}\leq \inf_{\chi_N\in  S_N}\left\{\|u-\chi_N\|_{H^{2\alpha}}+\|u_{N}^{(M)}-\chi_N\|_{H^{2\alpha}}\right\}.
\end{align*}
    We can estimate the second term using Proposition~\ref{prop:uniform_ellipticity} and the discrete orthogonality condition \eqref{eqn:discrete_orthogonality_condition} as follows
    \begin{align}\nonumber
    \hspace{-0.4cm}\gamma \|u_N^{(M)} -  \chi_N\|_{H^{2\alpha}}^2&\leq \left|\left\langle V(u_N^{(M)}-\chi_N),V(u_N^{(M)}-\chi_N)\right\rangle_M\right|\\\nonumber
    &=\left|\left\langle V(u-\chi_N),V(u_N^{(M)}-\chi_N)\right\rangle_M\right|\\\nonumber
    &\leq \left|\left\langle V(u-\chi_N),V(u_N^{(M)}-\chi_N)\right\rangle\right|\\\nonumber
    &\quad+\left|\left\langle V(u-\chi_N),V(u_N^{(M)}-\chi_N)\right\rangle-\left\langle V(u-\chi_N),V(u_N^{(M)}-\chi_N)\right\rangle_M\right|\\
    \begin{split}\label{eqn:upper_bound_energy_space_estimate}
    &\leq\|V\|_{2\alpha\rightarrow 0}^2\|u-\chi_N\|_{2\alpha}\|u_N^{(M)}-\chi_N\|_{2\alpha}\\
    &\quad+\mathcal{E}_{r,s}(\Delta_M)\|V\|_{r+2\alpha\rightarrow r}\|V\|_{s+2\alpha\rightarrow s}\|u-\chi_N\|_{r+2\alpha}\|\|u_N^{(M)}-\chi_N\|_{s+2\alpha}\\
    &\quad\quad+\mathcal{E}_{r,s}(\Delta_M)\|V\|_{r+2\alpha\rightarrow r}\|V\|_{s+2\alpha\rightarrow s}\|u-\chi_N\|_{s+2\alpha}\|\|u_N^{(M)}-\chi_N\|_{r+2\alpha}
    \end{split}
    \end{align}
    Using the inverse property of $S_N=S_h^{l,m}$ (Definition \ref{def:inverse_property}) we find if $\max\{r,s\}<M+1/2-2\alpha$
\begin{align*}
    \tilde{\gamma}\|u_N^{(M)}-\chi_N\|_{H^{2\alpha}}\leq & \|u-\chi_N\|_{2\alpha}+\mathcal{E}_{r,s}(\Delta_M)h^{-s}\|u-\chi_N\|_{r+2\alpha}+ \mathcal{E}_{r,s}(\Delta_M)h^{-r}\|u-\chi_N\|_{s+2\alpha}
\end{align*}
for some $\tilde{\gamma}>0$ independent of $N,\Delta_M,u$. And thus if
\begin{align*}
    \lim_{N\rightarrow\infty}\mathcal{E}_{r,s}(\Delta_M)h^{-(r+s)}=0,
\end{align*}
we find
\begin{align*}
    \tilde{\gamma}\|u_N^{(M)}-\chi_N\|_{H^{2\alpha}}&\leq\|u-\chi_N\|_{2\alpha}+h^{r}\|u-\chi_N\|_{r+2\alpha}+h^{s}\|u-\chi_N\|_{s+2\alpha}.
\end{align*}
By the approximation property of $S_N=S_h^{l,m}$ (Definition \ref{def:approximation_property}) the result follows.
\end{proof}
We note that Theorem~\ref{thm:convergence_in_energy_space} actually implies optimal convergence in Sobolev norms that are of higher order than the energy space by the following argument.
\begin{corollary}\label{cor:higher_order_sobolev_estimates}
Let $N,\Delta_M=\Delta_{M(N)}$ satisfy the assumptions of Theorem~\ref{thm:convergence_in_energy_space}, then for all $2\alpha<t<m+1/2$ we have optimal convergence in the sense that there is a constant independent of $N,\Delta_M,u$, such that
\begin{align*}
    \|u_N^{(M)}-u\|_{H^{t}}\leq Ch^{l-t}\|u\|_{H^l}.
\end{align*}
\end{corollary}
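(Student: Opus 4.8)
The plan is to lift the optimal energy-space estimate of Theorem~\ref{thm:convergence_in_energy_space} to higher-order norms by a standard inverse-estimate bootstrap: a triangle inequality split around an approximant from $S_N$, followed by the inverse property to trade the surplus regularity gained in $H^{2\alpha}$ for the higher index $t$. First I would fix a target $t$ with $2\alpha<t<m+1/2$ and produce a \emph{single} approximant $\chi_N\in S_N$ from the approximation property (Definition~\ref{def:approximation_property}). The crucial point is to exploit the $t$-independence clause in that definition: since the assumptions of Theorem~\ref{thm:convergence_in_energy_space} force $2\alpha<m+1/2$, both exponents $2\alpha$ and $t$ lie strictly below $m+1/2$, so I may pick $\sigma<2\alpha$ and obtain one sequence $\chi_h$ satisfying \eqref{eqn:approximation_property} simultaneously at both exponents. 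This guarantees that the same $\chi_N$ is admissible in all the estimates that follow.

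Next I would write
\[
\|u_N^{(M)}-u\|_{H^t}\leq \|u_N^{(M)}-\chi_N\|_{H^t}+\|\chi_N-u\|_{H^t}.
\]
The second term is handled directly by the approximation property with $s=l$, giving $\|\chi_N-u\|_{H^t}\leq C h^{l-t}\|u\|_{H^l}$ (legitimate in the stated range, where $t\le l$). For the first term, since $u_N^{(M)}-\chi_N\in S_N$, the inverse property (Definition~\ref{def:inverse_property}) applied with lower index $2\alpha$ and upper index $t$ yields
\[
\|u_N^{(M)}-\chi_N\|_{H^t}\leq C h^{2\alpha-t}\|u_N^{(M)}-\chi_N\|_{H^{2\alpha}},
\]
which is valid precisely because $2\alpha\le t\le m+1/2$.

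It then remains to bound $\|u_N^{(M)}-\chi_N\|_{H^{2\alpha}}$. I would insert $u$ via a second triangle inequality, $\|u_N^{(M)}-\chi_N\|_{H^{2\alpha}}\leq \|u_N^{(M)}-u\|_{H^{2\alpha}}+\|u-\chi_N\|_{H^{2\alpha}}$, bounding the first summand by Theorem~\ref{thm:convergence_in_energy_space} and the second by the approximation property (with $t=2\alpha$, $s=l$), both of which give $C h^{l-2\alpha}\|u\|_{H^l}$. Substituting back, the negative power $h^{2\alpha-t}$ from the inverse estimate cancels against the surplus $h^{l-2\alpha}$ to leave exactly $h^{l-t}$, matching the approximation-error term and yielding the claim.

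I do not anticipate a serious obstacle, as this is a routine argument. The only point requiring genuine care is ensuring that one and the same approximant $\chi_N$ serves in both the $H^{2\alpha}$ and $H^t$ estimates, which is exactly what the $t$-independence in Definition~\ref{def:approximation_property} supplies; and verifying that the endpoints of the admissible range $2\alpha<t<m+1/2$ are precisely those for which the inverse property (needing $2\alpha\le t\le m+1/2$) and the approximation property (needing $t<m+1/2$) can both be invoked.
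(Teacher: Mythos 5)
Your proposal is correct and follows essentially the same route as the paper's proof: a triangle inequality around an approximant $\psi_N\in S_N$, the inverse property to pass from $H^t$ down to $H^{2\alpha}$ at the cost of $h^{2\alpha-t}$, a second triangle inequality, and then Theorem~\ref{thm:convergence_in_energy_space} together with the uniform (i.e.\ $t$-independent) approximation property. Your explicit remark that one and the same $\chi_N$ must serve at both exponents is exactly the role the paper assigns to the uniformity clause in Definition~\ref{def:approximation_property}.
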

\begin{proof}
Let $2\alpha<t<m+1/2$, then
    \begin{align*}
        \|u_N^{(M)}-u\|_{H^{t}}&\leq \|u-\psi_N\|_{H^{t}}+\|\psi_N-u_N^{(M)}\|_{H^{t}}\\
        &\leq\|u-\psi_N\|_{H^{t}}+C h^{2\alpha-t}\|\psi_N-u_N^{(M)}\|_{H^{2\alpha}}\\
        &\leq \|u-\psi_N\|_{H^{t}}+C h^{2\alpha-t}\left(\|\psi_N-u\|_{H^{2\alpha}}+\|u-u_N^{(M)}\|_{H^{2\alpha}}\right)
    \end{align*}
    and the result follows by Theorem~\ref{thm:convergence_in_energy_space} and by the uniform approximation property in Definition~\ref{def:approximation_property}.
\end{proof}
\begin{remark}
One can see that the statements in Theorem~\ref{thm:convergence_in_energy_space} and Proposition~\ref{prop:uniform_ellipticity} are in fact true for much more general settings, including the case of 3D boundary integral equations, i.e. when $\dim \Gamma=2$, as long as appropriate error estimates for the discrete inner product similar to \eqref{eqn:general_error_estimate_trapezoidal_rule} are available. However, for the purpose of this paper we shall remain in the 2D setting.
\end{remark}
Although the above statements are phrased in a general form we can use them to make concrete predictions. We begin by considering equispaced collocation points $|x_{j+1}-x_j|=1/M$, then we have the following error estimate for the $L^2$-inner product as in \eqref{eqn:general_error_estimate_trapezoidal_rule}:

\begin{lemma}[Error in discrete $L^2$ inner product - equispaced sampling]\label{lem:bound_equispaced_trapezoidal_rule_L^2_product}
Let $\Delta_M=\{x_m=\tilde{x}+j/M\}_{m=1}^M$ be a set of equispaced collocation points (where it is understood $x+1\equiv x$) and let
\begin{align*}
    \left\langle f,g\right\rangle =\frac{1}{M}\sum_{m=1}^M\overline{f(x_m)}g(x_m)
\end{align*}
for $f,g\in H^r([0,1))$ for some $r>1/2$. Then there is a constant $C_{r,s}>0$ independent of $f,g$ such that for any $r>s>1/2$:
	\begin{align*}
		\left|\langle f,g\rangle-\langle f,g\rangle_M\right|\leq C_{r,s} M^{-r}\Big(\|f\|_{H^r}\|g\|_{H^s}+\|f\|_{H^s}\|g\|_{H^{r}}\Big).
	\end{align*}
\end{lemma}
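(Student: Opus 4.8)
The plan is to recognise the discrete inner product $\langle f,g\rangle_M=\frac1M\sum_{m=1}^M\overline{f(x_m)}g(x_m)$ as the periodic trapezoidal (rectangle) rule applied to the product $\phi:=\overline f\,g$, and to exploit the classical aliasing identity for this rule. First I would record, with $\langle m\rangle:=\max(1,|m|)$, that the norm in \eqref{eqn:definition_sobolev_norms_in_terms_of_fourier_modes} is exactly $\|f\|_s^2=\sum_m\langle m\rangle^{2s}|\hat f_m|^2$. Since $r>s>1/2$, the embedding of $H^s$ into the space of functions with absolutely summable Fourier coefficients guarantees that $\phi$ has an absolutely convergent Fourier series: indeed $(\widehat\phi)_k=\sum_j\overline{\hat f_j}\,\hat g_{j+k}$ and $\sum_k|(\widehat\phi)_k|\le\big(\sum_j|\hat f_j|\big)\big(\sum_l|\hat g_l|\big)$, each factor being finite because $\sum_j|\hat f_j|\le\big(\sum_j\langle j\rangle^{-2s}\big)^{1/2}\|f\|_s$ with $2s>1$. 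This legitimises the pointwise evaluations and the interchange of summation used below.

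For equispaced nodes $x_m=\tilde x+m/M$, summing the Fourier series of $\phi$ and using that $\frac1M\sum_{m=1}^M e^{2\pi ikm/M}$ equals $1$ when $M\mid k$ and vanishes otherwise, I obtain the aliasing formula $\langle f,g\rangle_M-\langle f,g\rangle=\sum_{k\ne0}(\widehat\phi)_{kM}\,e^{2\pi ikM\tilde x}$, since $\langle f,g\rangle=\int_0^1\phi=(\widehat\phi)_0$. Reindexing with $n=j$ and $n'=j+kM$ then gives
\[
 |\langle f,g\rangle-\langle f,g\rangle_M|\le\sum_{\substack{n\ne n'\\ M\mid(n'-n)}}|\hat f_n|\,|\hat g_{n'}| .
\]
The decisive structural observation is that every contributing pair satisfies $|n'-n|=|k|M\ge M$, so $\max(|n|,|n'|)\ge |k|M/2$; that is, at least one of the two frequencies is forced to be large, and this is precisely where the factor $M^{-r}$ will come from.

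I would then split the sum according to whether $|n'|\ge|k|M/2$ (case A) or $|n|\ge|k|M/2$ (case B), which together cover all pairs. In case A I write $|\hat f_n||\hat g_{n'}|=\big(\langle n\rangle^s|\hat f_n|\,\langle n'\rangle^r|\hat g_{n'}|\big)\cdot\big(\langle n\rangle^{-s}\langle n'\rangle^{-r}\big)$ and apply Cauchy--Schwarz over the pairs: the first factor sums to at most $\|f\|_s\|g\|_r$ (after dropping the case-A restriction), while the square of the second factor sums to $\sum_{k\ne0}\sum_{n:\,|n+kM|\ge|k|M/2}\langle n\rangle^{-2s}\langle n+kM\rangle^{-2r}$. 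On this region $\langle n+kM\rangle^{-2r}\le(|k|M/2)^{-2r}=2^{2r}|k|^{-2r}M^{-2r}$, so the sum is at most $2^{2r}M^{-2r}\big(\sum_{k\ne0}|k|^{-2r}\big)\big(\sum_n\langle n\rangle^{-2s}\big)$, a finite multiple $C_{r,s}M^{-2r}$ precisely because $r>1/2$ and $s>1/2$. This yields $\sqrt{C_{r,s}}\,M^{-r}\|f\|_s\|g\|_r$ for case A, and case B gives the symmetric bound $\sqrt{C_{r,s}}\,M^{-r}\|f\|_r\|g\|_s$ with the roles of $f$ and $g$ exchanged; adding the two reproduces the claimed estimate.

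The step I expect to be delicate is exactly the convergence of these residual sums in the regime where $r$ is only slightly larger than $1/2$. A naive estimate that places the entire $r$-weight on the large frequency and then sums $\sum_k|k|^{-r}$ over the aliasing index diverges whenever $r\le1$, so the full power $M^{-r}$ cannot be extracted that way. The remedy, and the crux of the argument, is to keep the $s$-weight on the low-frequency factor so that $\sum_n\langle n\rangle^{-2s}$ converges ($s>1/2$), and to extract from the high-frequency factor both the genuine decay $M^{-2r}$ and a convergent $\sum_k|k|^{-2r}$ ($r>1/2$) by squaring inside Cauchy--Schwarz; this is what simultaneously forces the hypotheses $r>s>1/2$ and produces the exact rate $M^{-r}$.
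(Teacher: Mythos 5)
Your proof is correct. It shares the paper's starting point --- the aliasing identity $\langle f,g\rangle_M-\langle f,g\rangle=\sum_{k\neq0}\widehat{(\bar f g)}_{kM}$ for equispaced nodes --- but then diverges in how the bilinear estimate is extracted. The paper proceeds in two separate steps: first it bounds the quadrature error by $C_tM^{-t}\|f\bar g\|_{H^t}$ (Corollary~\ref{cor:quad_error_in_sobolev_norm}), and then it proves a standalone product estimate $\|f\bar g\|_{H^r}\lesssim\|f\|_0\|g\|_0+\|f\|_r\|g\|_s+\|f\|_s\|g\|_r$ via the inequality $(|m|+|n|)^r\le C(|m|^r+|n|^r)$ and the discrete Minkowski inequality. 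You instead never form the full Sobolev norm of the product: you expand $\widehat{(\bar fg)}_{kM}=\sum_n\overline{\hat f_n}\hat g_{n+kM}$, note that each contributing pair has $\max(|n|,|n+kM|)\ge|k|M/2$, split on which frequency is large, and apply Cauchy--Schwarz with the $s$-weight on the low frequency and the $r$-weight (which absorbs both the $M^{-r}$ decay and the summability in $k$) on the high one. Your route is leaner --- it avoids proving the module property of $H^r$ and uses only the aliased coefficients, which is all the problem actually requires --- while the paper's factorization buys a reusable intermediate estimate ($\|f\bar g\|_{H^t}$) that it can cite elsewhere. Both arguments use the hypotheses $r>1/2$ and $s>1/2$ in exactly the same places and produce the same rate $M^{-r}$.
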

\begin{proof}
	For completeness the proof is given in Appendix~\ref{app:proof_error_bound_trapezoidal_rule}.
\end{proof}

If we now choose $S_N$ to be a sequence of spline spaces of degree $d$ on a quasiequispaced mesh (cf. Example~\ref{ex:splines_as_regular_boundary_element_spaces}) we find the following convergence result for the oversampled collocation method. Note in particular that here the collocation points need not match the mesh of the basis functions, i.e. this result reflects the idea that a small amount of oversampling can guarantee convergence even if the collocation points are chosen suboptimally:

\begin{corollary}\label{cor:convergence_on_equispaced_grids_spline_basis}
	If $\Delta_M$ are equispaced and $S_N$ are spline spaces of degree $d$ on a quasiequispaced mesh, and if in addition $M=M(N)\geq N^{\beta}$ for some $\beta>1+\frac{1}{2d+1-4\alpha}$, then there is a constant $C$ independent of $M,N,u$ such that 
	\begin{align*}
		\|u_N^{(M)}-u\|_{H^{2\alpha}}\leq C N^{2\alpha-d-1}\|u\|_{H^{d+1}}.
	\end{align*}
\end{corollary}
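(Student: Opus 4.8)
The plan is to apply Theorem~\ref{thm:convergence_in_energy_space} directly, feeding it the quadrature estimate from Lemma~\ref{lem:bound_equispaced_trapezoidal_rule_L^2_product} and exploiting the freedom in the exponents $r,s$. First I would record the relevant parameters: by Example~\ref{ex:splines_as_regular_boundary_element_spaces}, splines of degree $d$ on a quasiuniform mesh form a $(d+1,d)$-regular space, so $l=d+1$ and $m=d$; the quasiuniformity of the mesh moreover gives $h=h_N\sim N^{-1}$ up to a constant depending only on $\rho$. For equispaced collocation points, Lemma~\ref{lem:bound_equispaced_trapezoidal_rule_L^2_product} supplies \eqref{eqn:general_error_estimate_trapezoidal_rule} with $\mathcal{E}_{r,s}(\Delta_M)=C_{r,s}M^{-r}$ for any pair $r>s>1/2$.

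The heart of the argument is to select $r,s$ so that both hypotheses of Theorem~\ref{thm:convergence_in_energy_space} hold simultaneously. The admissibility condition $\max\{r,s\}=r<m+1/2-2\alpha=d+1/2-2\alpha$ together with $r>s>1/2$ confines the ratio $s/r$ to an open range whose infimum is $\tfrac{1/2}{d+1/2-2\alpha}=\tfrac{1}{2d+1-4\alpha}$. Inserting the quadrature estimate into the decay condition and using $h\sim N^{-1}$ and $M\ge N^{\beta}$, I compute
\begin{align*}
\mathcal{E}_{r,s}(\Delta_M)\,h^{-(r+s)}\le C\,N^{-r\beta}N^{r+s}=C\,N^{r(1-\beta)+s},
\end{align*}
which tends to zero precisely when $r(1-\beta)+s<0$, i.e. $\beta>1+s/r$. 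Because the hypothesis furnishes $\beta>1+\tfrac{1}{2d+1-4\alpha}$ as a \emph{strict} inequality, there is room to fix $s$ just above $1/2$ and $r$ just below $d+1/2-2\alpha$ so that $s/r<\beta-1$ while still respecting $r>s>1/2$ and $r<d+1/2-2\alpha$. With such a pair frozen, both hypotheses of Theorem~\ref{thm:convergence_in_energy_space} are satisfied.

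Theorem~\ref{thm:convergence_in_energy_space} then gives $\|u_N^{(M)}-u\|_{H^{2\alpha}}\le C\,h^{l-2\alpha}\|u\|_{H^l}=C\,h^{d+1-2\alpha}\|u\|_{H^{d+1}}$, and substituting $h\sim N^{-1}$ converts this into the claimed bound $C\,N^{2\alpha-d-1}\|u\|_{H^{d+1}}$. The step I would treat most carefully—and the genuine obstacle here—is the parameter selection: the strict inequality on $\beta$ is exactly what guarantees that the admissible region for $(r,s)$ is a nonempty open set, and one must note that although the constant $C_{r,s}$ from Lemma~\ref{lem:bound_equispaced_trapezoidal_rule_L^2_product} may deteriorate as $s\downarrow 1/2$ or $r\uparrow d+1/2-2\alpha$, it is fixed once a single valid pair is chosen and therefore enters only into the final $N$-independent constant $C$. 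Everything else reduces to the mechanical translation of $h$-powers into $N$-powers via the quasiuniform mesh relation.
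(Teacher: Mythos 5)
Your proposal is correct and is exactly the argument the paper intends: the corollary is stated as a direct consequence of Theorem~\ref{thm:convergence_in_energy_space} combined with the quadrature estimate $\mathcal{E}_{r,s}(\Delta_M)=C_{r,s}M^{-r}$ from Lemma~\ref{lem:bound_equispaced_trapezoidal_rule_L^2_product}, with $l=d+1$, $m=d$, $h\sim N^{-1}$, and the exponents chosen so that $s/r<\beta-1$ while $1/2<s<r<d+1/2-2\alpha$. Your identification of the infimum $\tfrac{1}{2d+1-4\alpha}$ of $s/r$ as the source of the threshold on $\beta$, and your remark that the constant $C_{r,s}$ is frozen once a valid pair is fixed, match the paper's reasoning precisely.
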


\subsection{Superconvergence and the discrete Aubin--Nitsche Lemma}\label{sec:discrete_aubin_nitsche_lemma}
Although the results in Theorem~\ref{thm:convergence_in_energy_space} guarantee convergence of the oversampled collocation method we have yet to ask at what asymptotic rate we expect this to occur. For continuous Galerkin methods it is possible to prove superconvergence by a duality argument, the so-called Aubin--Nitsche lemma (see \cite[\S4.3]{Hsiao2017} and \cite{hsiao1981}). We will demonstrate that a sufficient amount of superlinear oversampling can actually achieve such superconvergence for the oversampled collocation method as well, which explains the results observed in Fig.~\ref{fig:intro_example_singlelayeroversamplingfieldpointkite}.
\begin{theorem}[Discrete Aubin--Nitsche Lemma]\label{thm:discrete_aubin_nitsche}
Let $-l\leq t\leq 0$ and suppose \eqref{eqn:general_error_estimate_trapezoidal_rule} holds for $s,r$ with $\max\{r,s\}<\min\{m+1/2,-t\}-2\alpha$, and that $\Delta_M=\Delta_{M(N)}$ is chosen such that
\begin{align*}
    \lim_{N\rightarrow\infty}\mathcal{E}_{r,s}(\Delta_M)h^{2\alpha-t-\max\{r,s\}}=0.
\end{align*}
Then there is a constant $C>0$ independent of $M,N,u$ such that 
	\begin{align*}
		\|u_N^{(M)}-u\|_{H^{t+4\alpha}}\leq C h^{l-t-4\alpha}\|u\|_{H^{l}}.
	\end{align*}
	This means the optimal convergence rate in $H^{t+4\alpha}$ is achieved.
\end{theorem}
\begin{proof}
Note we have by the definition of the dual norm, for all $t\leq 0$:
\begin{align}\nonumber
	\|(V^*V)(u-u_N^{(M)})\|_{H^t}&=\sup_{\substack{\psi\in C^\infty\\\|\psi\|_{H^{-t}}=1}}|\langle (V^*V)(u-u_N^{(M)}),\psi\rangle| \\\nonumber& =\sup_{\substack{\psi\in C^\infty\\\|\psi\|_{H^{-t}}=1}}|\langle Vu-Vu_N^{(M)}),V\psi\rangle| \\\nonumber
	&\hspace{-0cm}\leq \sup_{\substack{\psi\in C^\infty\\\|\psi\|_{H^{-t}}=1}}\inf_{\chi_N\in  S_N}\left(|\langle Vu-Vu_N^{(M)},V\psi-V\chi_N\rangle|+|\langle Vu-Vu_N^{(M)},V\chi_N\rangle|\right)\\\nonumber
	&\hspace{-0cm}= \sup_{\substack{\psi\in C^\infty\\\|\psi\|_{H^{-t}}=1}}\inf_{\chi_N\in  S_N}\left(|\langle Vu-Vu_N^{(M)},V\psi-V\chi_N\rangle|\right. \\ \label{eqn:collocation_use_of_orthogonal_projection}
	&\hspace{0.75cm}+\left.|\langle Vu-Vu_N^{(M)},V\chi_N\rangle-\langle V(u-u_N^{(M)}),V\chi_N\rangle_M|\right)\\\label{eqn:aubin_nitsche_trapezoidal_rule_included}
	&\hspace{-0cm}\leq \sup_{\substack{\psi\in C^\infty\\\|\psi\|_{H^{-t}}=1}}\inf_{\chi_N\in  S_N}\left(\|V\|^2_{2\alpha\rightarrow0}\|u-u_N^{(M)}\|_{H^{2\alpha}}\|\psi-\chi_N\|_{H^{2\alpha}}\right.\\\nonumber
	&\hspace{0.75cm}+\mathcal{E}_{r,s}(\Delta_M)\|V\|_{r+2\alpha\rightarrow r}\|V\|_{s+2\alpha\rightarrow s}\|u-u_N^{(M)}\|_{r+2\alpha}\|\|\chi_N\|_{s+2\alpha}\\\nonumber
    &\hspace{1.5cm}\left.+\mathcal{E}_{r,s}(\Delta_M)\|V\|_{r+2\alpha\rightarrow r}\|V\|_{s+2\alpha\rightarrow s}\|u-u_N^{(M)}\|_{s+2\alpha}\|\|\chi_N\|_{r+2\alpha}\right).
\end{align}
Here \eqref{eqn:collocation_use_of_orthogonal_projection} follows from \eqref{eqn:discrete_orthogonality_condition} and \eqref{eqn:aubin_nitsche_trapezoidal_rule_included} follows from \eqref{eqn:general_error_estimate_trapezoidal_rule} since $\max\{r,s\}<m+1/2-2\alpha$. We now refer back to the approximation property of the basis spaces $S_N$ (Definition \ref{def:approximation_property}) which shows that for any $\psi\in H^{-t}$ we have $\chi_N\in S_N$ such that (uniformly for all $-l<s<m+1/2$ and $s\leq -t\leq l$):
\begin{align*}
	\|\psi-\chi_N\|_{H^s}<C h^{-(s+t)}\|\psi\|_{H^{-t}}.
\end{align*}
for this choice of $\chi_N$ it also follows that
\begin{align*}
	\|\chi_N\|_{H^s}&\leq \|\psi-\chi_N\|_{H^s}+\|\psi\|_{H^s}\\
	&\leq \|\psi-\chi_N\|_{H^s}+\|\psi\|_{H^{-t}}\leq (1+Ch^{-(s+t)})\|\psi\|_{H^{-t}}\leq \tilde{C}\|\psi\|_{H^{-t}},
\end{align*}
for some $\tilde{C}>0$ independent of $\psi$. Thus, choosing $\chi_N$ in this way, the right hand side of \eqref{eqn:aubin_nitsche_trapezoidal_rule_included} is bounded by (since $\max\{r,s\}<-t-2\alpha$)
\begin{align*}
    \tilde{\tilde{C}}\left(\|u-u_N^{(M)}\|_{H^{2\alpha}}h^{-(2\alpha+t)}+\mathcal{E}_{r,s}(\Delta_M)\left(\|u-u_N^{(M)}\|_{r+2\alpha}+\|u-u_N^{(M)}\|_{s+2\alpha}\right)\right).
\end{align*}
Now we use Corollary~\ref{cor:higher_order_sobolev_estimates} to conclude:
\begin{align*}
    \|u-u_N^{(M)}\|_{H^{t+4\alpha}}\leq \tilde{\tilde{C}}h^{l-(4\alpha+t)}\left(1+\mathcal{E}_{r,s}(\Delta_M)h^{2\alpha-t-\max\{r,s\}}\right)\|u\|_{H^{l}}
\end{align*}
for some constant $\tilde{\tilde{C}}$ independent of $N,\Delta_M,u$ and the result follows.
\end{proof}
As before we can use Lemma~\ref{lem:bound_equispaced_trapezoidal_rule_L^2_product} to make concrete predictions for equispaced collocation points $\Delta_M=\{\tilde{x}+m/M\}_{m=1}^M$ and approximation spaces $S_N$ consisting of degree $d$ splines on a quasiequispaced grid:

\begin{corollary}\label{cor:aubin-nitsche_for_splines}
If $\Delta_M$ are equispaced and $S_N$ are spline spaces of degree $d$ on a quasiequispaced mesh, and if in addition $M=M(N)\geq N^{\beta}$ for some $\beta>2+\frac{1}{2d+1-4\alpha}$, then there is a constant $C$ independent of $M,N,u$ such that 
\begin{align*}
	\|u_N^{(M)}-u\|_{H^{-d-1+4\alpha}}\leq C N^{4\alpha-2d-2}\|u\|_{H^{d+1}}.
\end{align*}
\end{corollary}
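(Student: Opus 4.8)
The plan is to read Corollary~\ref{cor:aubin-nitsche_for_splines} off the discrete Aubin--Nitsche Lemma (Theorem~\ref{thm:discrete_aubin_nitsche}) by inserting the equispaced quadrature bound of Lemma~\ref{lem:bound_equispaced_trapezoidal_rule_L^2_product} and converting the mesh width $h$ into the dimension $N$. For degree-$d$ splines on a quasiequispaced mesh one has $S_N=S_h^{d+1,d}$ by Example~\ref{ex:splines_as_regular_boundary_element_spaces}, so $l=d+1$ and $m=d$, while quasiuniformity gives $h=h_N\asymp 1/N$ with constants independent of $N$. I would apply Theorem~\ref{thm:discrete_aubin_nitsche} at the extreme admissible value $t=-l=-(d+1)$, which is allowed since $-l\le t\le 0$. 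With this choice the target space is $H^{t+4\alpha}=H^{-d-1+4\alpha}$, exactly the one in the statement, and the exponent of $h$ produced by the theorem is $l-t-4\alpha=2(d+1)-4\alpha=2d+2-4\alpha$; substituting $h\asymp 1/N$ immediately yields the claimed rate $h^{\,l-t-4\alpha}\asymp N^{4\alpha-2d-2}$. Everything then hinges on verifying that the hypotheses of Theorem~\ref{thm:discrete_aubin_nitsche} hold for this $t$ under the stated oversampling.

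For equispaced collocation points Lemma~\ref{lem:bound_equispaced_trapezoidal_rule_L^2_product} furnishes the estimate \eqref{eqn:general_error_estimate_trapezoidal_rule} with $\mathcal{E}_{r,s}(\Delta_M)=C_{r,s}\,M^{-\max\{r,s\}}$ for any pair with $\max\{r,s\}>\min\{r,s\}>1/2$. Taking $r\ge s$ without loss of generality, the regularity constraint $\max\{r,s\}<\min\{m+1/2,-t\}-2\alpha$ collapses, for $t=-(d+1)$, to $r<d+1/2-2\alpha$, so the admissible window is $1/2<s\le r<d+1/2-2\alpha$, which is nonempty precisely when $d>2\alpha$ (as holds, e.g., for linear splines and the single-layer operator). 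Writing $M\ge N^{\beta}$ and $h\asymp N^{-1}$, I would insert these into the limit condition of Theorem~\ref{thm:discrete_aubin_nitsche} and collect powers of $N$; the controlling term reduces to a single power of $N$ whose vanishing is equivalent to the requirement $r(\beta-1)>d+1-2\alpha$.

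The threshold on $\beta$ then emerges by optimising the admissible pair: pushing $r\nearrow d+1/2-2\alpha$ (and $s\searrow 1/2$, which simultaneously keeps the weaker energy-space prerequisite invoked through Corollary~\ref{cor:higher_order_sobolev_estimates} satisfied) turns the requirement into $\beta-1>\dfrac{d+1-2\alpha}{\,d+1/2-2\alpha\,}=1+\dfrac{1}{2d+1-4\alpha}$, that is $\beta>2+\dfrac{1}{2d+1-4\alpha}$, exactly the hypothesis of the corollary. I would close the argument by noting that for any $\beta$ strictly above this threshold one may fix a single admissible pair $(r,s)$, bounded away from the endpoints, for which the limit condition holds; Theorem~\ref{thm:discrete_aubin_nitsche} then applies and delivers the displayed estimate after the substitution $h\asymp 1/N$.

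The main obstacle is the exponent bookkeeping that pins the constant down to $2+\frac{1}{2d+1-4\alpha}$ rather than the $1+\frac{1}{2d+1-4\alpha}$ of the energy-space Corollary~\ref{cor:convergence_on_equispaced_grids_spline_basis}. The extra unit traces back to the quadrature correction term, which carries the factor $h^{\,2\alpha+t-\max\{r,s\}}$ rather than the $h^{-(r+s)}$ of the energy estimate; because $t=-l$ is maximally negative, one must track the sign of $t$ carefully, and balancing this factor against $\mathcal{E}_{r,s}\asymp M^{-\max\{r,s\}}$ at the endpoint $\max\{r,s\}=d+1/2-2\alpha$ is exactly what produces the shift by one. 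The only genuine care needed is to confirm that the supremum over the admissible $r$ is attained in the limit, so that no sharper threshold is achievable and no larger one is forced.
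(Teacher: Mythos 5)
Your derivation is correct and is exactly the route the paper intends (the corollary is stated without a separate proof, as a direct consequence of Theorem~\ref{thm:discrete_aubin_nitsche} and Lemma~\ref{lem:bound_equispaced_trapezoidal_rule_L^2_product}): take $t=-(d+1)$, $l=d+1$, $m=d$, $h\asymp 1/N$, $\mathcal{E}_{r,s}\asymp M^{-r}$, and let $r\nearrow d+1/2-2\alpha$. Your bookkeeping --- in particular tracking the quadrature correction as $\mathcal{E}_{r,s}h^{2\alpha+t-\max\{r,s\}}$, the form that actually emerges in the proof of the theorem --- reproduces the stated threshold $\beta>2+\tfrac{1}{2d+1-4\alpha}$ and the rate $N^{4\alpha-2d-2}$ exactly.
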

This tells us that just a bit more than quadratic oversampling suffices to achieve the fastest convergence rate in $H^{-d-1+4\alpha}$. We can infer similar results for the convergence rates in $\|\cdot\|_{H^{t+4\alpha}}$ for $-d-1<t\leq0$.

\subsection{Exact expression for the error for equispaced spline bases}\label{sec:exact_expression_error_equispaced_splines}
So far we have tried to keep the analysis fairly general to allow for suboptimal choices of collocation points. In this section we are interested in understanding the effects of oversampling when the collocation points and approximation spaces are chosen in an optimal way. We expect that oversampling can mitigate the effects of slower convergence in the standard collocation method and, with sufficient oversampling, recover the convergence rate of an associated Bubnov-Galerkin method. Specifically, in this section we choose $S_N$ to consist of splines of degree $d$ on an equispaced grid $0=x_1<x_2<\cdots<x_N<1$, with $x_n=n/N,n=1,\dots N$, and we let the corresponding collocation points be given by the following mesh refinement
\begin{align}\label{eqn:def_equispaced_matching_collocation_points}
    \Delta_M=\left\{\frac{l+\xi_j}{N}\,\Big|\, l=1,\dots N,j=1,\dots, J\right\}, \quad \xi_j=j/J,
\end{align}
such that $M=JN, J\in\mathbb{N}$, and the discrete inner product is given by
\begin{align}\label{eqn:discrete_inner_product_equispaced_points}
    \left\langle f,g\right\rangle_M=\frac{1}{N}\sum_{n=1}^N\frac{1}{J}\sum_{j=1}^J\overline{f\left(\frac{l+\xi_j}{N}\right)}g\left(\frac{l+\xi_j}{N}\right).
\end{align}
The approach we take here was first provided by Sloan \cite{sloan1988quadrature} (see also \cite{chandler1990,sloan_1992}) as a way to study generalised quadrature rules for the Galerkin inner product. In the following we adapt the results from \cite{chandler1990} to our setting, with the only minor difference between our case and theirs being that our test functions are of the form $V\chi_N$ as opposed to $\chi_N$. While Chandler \& Sloan \cite{chandler1990} were mainly focused on constructing specific quadrature rules similar to \eqref{eqn:discrete_inner_product_equispaced_points} which keep $J$ fixed as $N$ increases, the main novelty for this section is to use their arguments to understand the behaviour when $J=J(N)$ varies with $N$.

In order to facilitate this we need to make a slightly stronger assumption on $V$, namely that it has a Fourier series representation in the form
\begin{align}\label{eqn:pseudo_differential_form_V}
    Vg(x)=\hat{g}_0+\sum_{m\neq 0}[m]^{2\alpha}\hat{g}_me^{2\pi i m x},
\end{align}
where we introduced the notation 
\begin{align*}
    \left[m\right]=\begin{cases}1,&\quad\text{if\ }m=0\\
    |m|,&\quad\text{if\ }m\neq 0.
    \end{cases}
\end{align*}
This means that we assume $V$ is a pseudo-differential operator whose action maps every Fourier mode to a constant multiple of itself. An example of an operator taking this form is the single layer potential for the Laplace equation on a circular boundary.

In this case we are able to get an exact expression for the Fourier coefficients of the error $u_N^{(M)}-\tilde{u}$ and use this to derive tight estimates on the convergence rate as follows:
\begin{theorem}\label{thm:convergence_rates_equispaced_grids}
If the method satisfies the consistency condition $d>2\alpha$, then it is stable and converges satisfying the following error estimate:
\begin{align*}
    \|u_N^{(M)}-\tilde{u}\|_{H^{4\alpha-(d+1)}}\leq C\left(M^{-(d+1)+2\alpha}+N^{-2(d+1)+4\alpha}\right)\|\tilde{u}\|_{H^{d+1}}
\end{align*}
where $C$ is a constant depending on $d,\alpha$, but independent of $N,M$.
\end{theorem}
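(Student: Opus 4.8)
The plan is to follow the Fourier-analytic approach of Chandler \& Sloan \cite{chandler1990}, adapted to the test functions $V\chi_N$ in place of $\chi_N$. Two structural facts drive everything. First, by the pseudo-differential assumption \eqref{eqn:pseudo_differential_form_V} the operator $V$ is diagonal in the Fourier basis, multiplying the $k$-th mode by $[k]^{2\alpha}$. Second, a periodic spline of degree $d$ on the equispaced grid $\{n/N\}$ has Fourier coefficients of the factored form $\hat{\chi}_k = \hat{B}_k\, c_{k \bmod N}$, where $\hat{B}_k = \tfrac{1}{N}\big(\tfrac{\sin(\pi k/N)}{\pi k/N}\big)^{d+1}$ are the B-spline coefficients (so $|\hat B_k|\lesssim N^d|k|^{-(d+1)}$, vanishing at nonzero multiples of $N$) and $c$ is $N$-periodic. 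First I would expand $u_N^{(M)} = \sum_k \hat B_k c_{k\bmod N}\,e^{2\pi i k x}$ with the $N$ free parameters $c_0,\dots,c_{N-1}$, and rewrite the discrete orthogonality condition \eqref{eqn:discrete_orthogonality_condition}. Since the points \eqref{eqn:def_equispaced_matching_collocation_points} reduce to the equispaced nodes $m/M$ with equal weights $1/M$, the discrete inner product obeys the aliasing identity $\langle f,g\rangle_M = \sum_k \overline{\hat f_k}\sum_{p\in\mathbb{Z}}\hat g_{k+pM}$; testing against $V\chi_N$ for all $\chi_N\in S_N$ and grouping by residue $r=k\bmod N$ then produces one equation per residue class $r=0,\dots,N-1$.

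The decisive simplification, which I would establish next, is that because $M=JN$ the aliased index $k+pM$ has the same residue modulo $N$ as $k$, so $c_{(k+pM)\bmod N}=c_r$ and the $N\times N$ system decouples into the scalar equations $A_r c_r = F_r$ with
\begin{align*}
A_r &= \sum_{k\equiv r}[k]^{2\alpha}\,\overline{\hat B_k}\sum_{p}[k+pM]^{2\alpha}\hat B_{k+pM},\\
F_r &= \sum_{k\equiv r}[k]^{2\alpha}\,\overline{\hat B_k}\sum_{p}[k+pM]^{2\alpha}\hat{\tilde u}_{k+pM}.
\end{align*}
Isolating the $p=0$ contributions recovers the continuous Bubnov--Galerkin quantities $A_r^{BG}=\sum_{k\equiv r}[k]^{4\alpha}|\hat B_k|^2$ and $F_r^{BG}=\sum_{k\equiv r}[k]^{4\alpha}\overline{\hat B_k}\hat{\tilde u}_k$, whose ratio is the Bubnov--Galerkin coefficient $c_r^{BG}$. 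I would prove stability by showing $A_r^{BG}$ is bounded below uniformly in $N$ and dominates the $p\neq0$ aliasing terms in $A_r$: using $[k]^{4\alpha}|\hat B_k|^2\sim N^{2d}|k|^{4\alpha-2(d+1)}$ the defining series converges exactly when $2\alpha<d+\tfrac12$, which is guaranteed by the consistency condition $d>2\alpha$, while every aliasing term carries an index $|k+pM|\gtrsim M$ and is therefore smaller by a factor decaying in $J$.

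Finally, with the $c_r$ in hand I would assemble the error coefficients $\hat e_{r+jN}=\hat B_{r+jN}c_r-\hat{\tilde u}_{r+jN}$ and evaluate $\|u_N^{(M)}-\tilde u\|_{H^{4\alpha-(d+1)}}^2=\sum_k[k]^{2(4\alpha-d-1)}|\hat e_k|^2$. Splitting $c_r=c_r^{BG}+(c_r-c_r^{BG})$ separates this into the intrinsic Bubnov--Galerkin error, which yields the superconvergent negative-norm rate $N^{-2(d+1)+4\alpha}$ (the same doubling one gets from duality applied to the continuous method), and the aliasing perturbation $c_r-c_r^{BG}=(F_r-A_rc_r^{BG})/A_r$. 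Because the $p=0$ parts of the numerator cancel by definition of $c_r^{BG}$, the numerator $F_r-A_rc_r^{BG}$ is built entirely from modes of size $\gtrsim M$; feeding in $\tilde u\in H^{d+1}$ and Cauchy--Schwarz on the aliasing sums then produces the $M^{-(d+1)+2\alpha}$ term.

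The main obstacle will be the final bookkeeping: bounding the aliasing sums $F_r-F_r^{BG}$ and $A_r-A_r^{BG}$ \emph{sharply} enough to recover the exact exponents $M^{-(d+1)+2\alpha}$ and $N^{-2(d+1)+4\alpha}$ rather than lossy over-estimates, which requires tracking the competition between the $|k|^{-(d+1)}$ decay of $\hat B_k$, the $[k]^{2\alpha}$ growth from $V$, and the limited $H^{d+1}$ regularity of $\tilde u$. A closely related difficulty is to confirm that the scalar denominators $A_r$ remain bounded away from zero uniformly in both $r$ and $N$, so that the decoupled inversions are genuinely stable and the perturbation $c_r-c_r^{BG}$ can be controlled by its numerator alone.
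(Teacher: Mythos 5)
Your proposal follows essentially the same route as the paper's proof: both adapt the Chandler--Sloan Fourier analysis, exploit that $M=JN$ makes aliasing preserve residue classes modulo $N$ so that the discrete normal equations decouple into one scalar equation per residue class, establish stability via a uniform lower bound on the resulting denominators (the paper shows $D(y)\ge 1$), and split the error into a Bubnov--Galerkin part yielding $N^{-2(d+1)+4\alpha}$ and an aliasing part yielding $M^{-(d+1)+2\alpha}$. The only cosmetic difference is that the paper works with the normalized residue-class basis $\psi_\mu$ and exact expressions $D$, $E$, $Z_N$, $R_N$, whereas you parametrize splines by their B-spline Fourier factorization and use a Galerkin-plus-perturbation split; the underlying computations and estimates coincide.
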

Thus we find that if $J(N)=M/N:\mathbb{N}_{>0}\rightarrow\mathbb{N}_{>0}$ the method converges like
\begin{align*}
    \tilde{u}-u_N^{(M)}=\mathcal{O}\left( M^{-({d+1})+2\alpha}+N^{-2(d+1)+4\alpha}\right)
\end{align*}
so the fastest possible rate is achieved for $M=N^2$. In particular this means that for equispaced sampling points that are a refinement of the mesh in the approximation space, linear oversampling leads to an improvement of the error by a factor of $J^{-(d+1)+2\alpha}$. For the single layer potential with linear splines this means by a factor of $J^{-3}$ which is indeed observed in practice, see Fig.~\ref{fig:linear_oversampling_graph}.
\begin{remark}\label{rmk:modified_oversampled_col_and_compact_perturbation_argument}
We mentioned in \S\ref{sec:least-squares_to_bubnov-galerkin} that one might also consider a \textit{modified} oversampled collocation method which is defined through a modified discrete orthogonality condition \eqref{eqn:modified_discrete_orthogonality_condition} of the form
\begin{align*}
     \left\langle \chi_N,Vu_N^{(M)}\right\rangle_{M}=\left\langle \chi_N,V\tilde{u}\right\rangle_{M},\quad \forall \chi_N\in S_N,
\end{align*}
and which corresponds to a discrete version of a standard Galerkin method. The analysis in this present section can be applied in an analogous way for this modified oversampled collocation method, and one can prove (under the consistency assumption $d>2\alpha$) that there is $C>0$ such that:
\begin{align}\label{eqn:optimal_convergence_modified_collocation}
    \|u_N^{(M)}-\tilde{u}\|_{H^{2\alpha-(d+1)}}\leq C\left(M^{-(d+1)+2\alpha}+N^{-2(d+1)+2\alpha}\right)\|\tilde{u}\|_{H^{d+1}}.
\end{align}
We highlight that whilst at present the results of Theorem~\ref{thm:convergence_rates_equispaced_grids} for the least-squares oversampled collocation method (as defined via \eqref{eqn:discrete_orthogonality_condition}) are restricted to operators $V$ in the pseudo-differential form \eqref{eqn:pseudo_differential_form_V}, the analysis for the modified oversampled collocation method actually extends to a compact perturbation of this form. Indeed \eqref{eqn:optimal_convergence_modified_collocation} applies equally if $V=V_0+\mathcal{K}$, where $V_0$ takes the form \eqref{eqn:pseudo_differential_form_V} and $\mathcal{K}:H^s\rightarrow H^t$ is continuous as a bounded linear map for all $s,t\in\mathbb{R}$. This follows from a standard argument that is described for instance by Arnold \& Wendland \cite[\S3]{Arnold1985} and reproduced for completeness in Appendix~\ref{app:compact_perturbation_argument}.
\end{remark}
\begin{proof}[Proof of Theorem~\ref{thm:convergence_rates_equispaced_grids}]
We follow \cite[\S2]{chandler1990} and \cite[\S7]{sloan_1992} and introduce a convenient basis for $S_N$ (where here we write $\Lambda_N=\left\{\mu\in\mathbb{Z}:-N/2<\mu\leq N/2\right\}$ and $\Lambda_N^*=\Lambda_N\setminus\{0\}$):
\begin{align*}
    \psi_\mu(x)=\begin{cases}
    1,&\mu=0\\
    \sum_{k\equiv \mu (N)}(\mu/k)^{d+1} e^{2\pi i kx},&\mu\in\Lambda_N^*.
    \end{cases}
\end{align*}
Note this is indeed a spline of the given degree $d$ since its Fourier coefficients satisfy the appropriate recurrence relation.
\begin{align*}
    k^{d+1}\hat{v}_k=\mu^{d+1}\hat{v}_\mu,\quad \text{if\ }k\equiv \mu.
\end{align*}
In many ways the basis $\{\psi_\mu\}_{\mu\in\Lambda_N}$ behaves like a Fourier basis, in particular
\begin{align*}
    \psi_{\mu}(x+n/N)=e^{2\pi i \mu n/N }\psi_{\mu}(x),
\end{align*}
which allows us to treat the leading order terms in the oversampled collocation system exactly: Let us write our oversampled collocation approximation as
\begin{align*}
    u^{(M)}_N=\sum_{\nu\in\Lambda_N}\hat{a}_\nu \psi_{\nu}
\end{align*}
and let the true solution to \eqref{eqn:original_integral_equation} be $\tilde{u}(x)=\sum_{m\in\mathbb{Z}}\hat{u}_m\exp(2\pi im x)$. The discrete orthogonality conditions \eqref{eqn:discrete_orthogonality_condition} are
\begin{align}\label{eqn:discrete_orthogonality_special_basis}
   \sum_{\nu\in\Lambda_N} \left\langle A\psi_\mu,A\psi_{\nu}\right\rangle_M\hat{a}_\nu=\left\langle A\psi_{\mu}, A\tilde{u}\right\rangle_M, \quad \mu=1,\dots N.
\end{align}
One can then show after a few steps of algebra that for $\mu\in\Lambda_N$:
\begin{align}\label{eqn:explicit_solution_linear_system}
    \hat{a}_\mu=\begin{cases}\frac{1}{J}\sum_{j=1}^J\sum_{n\equiv 0(N)}[n]^{2\alpha}\hat{u}_n\exp\left(\frac{n}{N}\xi_j\right),&\text{if\ }\mu=0\\
    D\left(\frac{\mu}{N}\right)^{-1}\frac{1}{J}\sum_{j=1}^J\sum_{n\equiv\mu (N)}\left[\frac{n}{\mu}\right]^{2\alpha}\exp\left(2\pi i \frac{n-\mu}{N}\xi_j\right) \hat{u}_n\left(1+\overline{\Omega\left(\xi_j,\frac{\mu}{N}\right)}\right),&\text{if\ }\mu\neq 0,
    \end{cases}
\end{align}
where
\begin{align*}
    D(y)&=\frac{1}{J}\sum_{j=1}^J\left|1+\Omega\left(\xi_j,y\right)\right|^2,\\
    \Omega(\xi,y)&=|y|^{{d+1}-\beta}\sum_{l\neq 0}\frac{1}{|l+y|^{{d+1}-\beta}}e^{2\pi il\xi}.
\end{align*}
In consequence it follows that
\begin{align}\label{eqn:exact_expression_error_equispaced_grids}
  \hat{a}_\mu-\hat{u}_\mu=\begin{cases}Z_N,&\text{if\ }\mu=0\\
  -\frac{E(\mu/N)}{D(\mu/N)}\hat{u}_\mu+R_N(\mu),&\text{if\ }\mu\neq 0,
  \end{cases}
\end{align}
where
\begin{align*}
    Z_N&=\sum_{\substack{n\in\mathbb{Z}\\n\neq 0}}\left[nM\right]^{2\alpha}\hat{u}_{nM},\\
        E(y)&=|y|^{d+1-2\alpha}\sum_{l\neq0}\frac{1}{|lJ+y|^{d+1-2\alpha}}+\frac{1}{J}\sum_{j=1}^J\left|\Omega(\xi_j,y)\right|^2,\\
    R_N(\mu)&=D\left(\frac{\mu}{N}\right)^{-1}\left(\sum_{k\neq 0}\left[\frac{\mu+kM}{\mu}\right]^{2\alpha}\hat{u}_{\mu+kM} \right. \\
    &\quad\quad\left. +\sum_{k\neq 0}\left[\frac{\mu+kN}{\mu}\right]^{2\alpha}\hat{u}_{\mu+kN}\left|\frac{\mu}{N}\right|^{d+1-2\alpha}\sum_{\substack{l\equiv k (J)\\l\neq 0}}\left|\frac{1}{l+\mu/N}\right|^{d+1-2\alpha}\right).
\end{align*}
The details of this derivation require only very minor modification to the discussion in \cite[Section 2]{chandler1990}, but for completeness the arguments are provided in Appendix~\ref{app:derivation_exact_error_expression}. We show in Appendix~\ref{app:derivation_exact_error_expression} in \eqref{eqn:app_proof_that_D_uniformly_bounded_away_from_0} that
\begin{align*}
    D(y)\geq 1,\quad \forall y\in[-1/2,1/2]
\end{align*}
for any choice $J=J(N):\mathbb{N}_{>0}\rightarrow\mathbb{N}_{>0}$, which means the oversampled collocation system is well-posed and stable for any integer rate of oversampling $M=J(N)N$. The expression for the error in the small frequency Fourier modes in \eqref{eqn:exact_expression_error_equispaced_grids} determines the fastest possible rate of convergence in any Sobolev norm because the following ``projection'' $P_N$ onto the low-frequencies
\begin{align*}
    P_N:f\mapsto\sum_{\mu\in\Lambda_N}\hat{f}_\mu \psi_\mu(x)
\end{align*}
satisfies
\begin{align}\label{eqn:estimate_for_small_frequency_projection}
    \|f-P_Nf\|_{H^{s}}\leq C_s N^{s-t}\|f\|_{H^{t}},\quad \forall s+1/2<t\leq d+1.
\end{align}
We can use the expressions \eqref{eqn:exact_expression_error_equispaced_grids} analogously to \cite[(7.38)-(7.39)]{sloan_1992} and show that (subject to the consistency assumption $d>2\alpha$):
\begin{align}\label{eqn:estimates_for_remainder_terms}
    |Z_N|^2&+\sum_{\mu\in\Lambda_N^*}|\mu|^{2(4\alpha-d-1)}|R_N(\mu)|^2 \\ \nonumber
    &\leq C_{\alpha,r}\left(M^{-2r+4\alpha}\|\tilde{u}\|_{H^r}^2+N^{-2r-2(d+1)+8\alpha}\|\tilde{u}\|_{H^r}^2\right),\quad \forall r>2\alpha+1/2.
    \end{align}
Thus by taking $r$ arbitrarily large we can ensure an arbitrarily fast rate of decay in the terms arising from $Z_N,R_N(\mu)$. Therefore the leading order error term is due to $E(\mu/N)$, and we can in fact estimate this one as well:
\begin{align}\label{eqn:estimates_for_leading_term}
\left|\frac{E(\mu/N)}{D(\mu/N)}\hat{u}_\mu\right|^2&\leq C_{\alpha,d}\left(\left|M\right|^{4\alpha-2(d+1)}|\mu|^{2(d+1)-4\alpha}+\left|N\right|^{8\alpha-4(d+1)}|\mu|^{4(d+1)-8\alpha}\right)\left|\hat{u}_\mu\right|,
\end{align}
if $d>2\alpha$. Thus combining \eqref{eqn:estimates_for_remainder_terms} \& \eqref{eqn:estimates_for_leading_term} we have:
\begin{align*}
\|u_N^{(M)}-P_N\tilde{u}\|_{H^{4\alpha-(d+1)}}^2&=\sum_{\mu\in\Lambda_N}\left[\mu\right]^{2(4\alpha-(d+1))}|\hat{a}_\mu-\hat{u}_\mu|^2\left(1+\sum_{l\neq0}\left[\frac{\mu}{\mu+lN}\right]^{4(d+1)-8\alpha}\right)\\
&\leq C\sum_{\mu\in\Lambda_N}\left[\mu\right]^{2(4\alpha-(d+1))}|\hat{a}_\mu-\hat{u}_\mu|^2\\
&\leq \tilde{C}\left(M^{4\alpha-2(d+1)}\|\tilde{u}\|_{H^{2\alpha}}^2+N^{8\alpha-4(d+1)}\|\tilde{u}\|_{H^{d+1}}^2\right.\\
&\quad\left.+M^{-2r+4\alpha}\|\tilde{u}\|_{H^r}^2+N^{-2r-2(d+1)+8\alpha}\|\tilde{u}\|_{H^r}^2\right),\quad \forall r>2\alpha+1/2.
\end{align*}
And combining this with the projection estimate \eqref{eqn:estimate_for_small_frequency_projection} yields the desired bound
\begin{align*}
    \|u_N^{(M)}-\tilde{u}\|_{H^{4\alpha-(d+1)}}\leq C\left(M^{2\alpha-(d+1)}+N^{-2(d+1)+4\alpha}\right)\|\tilde{u}\|_{H^{d+1}}.
\end{align*}
\end{proof}

\section{Oversampled collocation in specific settings}
\label{sec:specific_settings}

In the previous section, we focused our attention mainly on smooth boundaries with equispaced collocation points, but in this section we aim to demonstrate that a slight amount of oversampling can actually stabilise the oversampled collocation method even in the case of Lipschitz domains and the case of highly sub-optimal choices of collocation points. This is with a view to the possible advantages of oversampled collocation methods in general settings/geometries, particularly in 3D, where an optimal choice of collocation points may not be immediately obvious.
\subsection{Non-equispaced sampling points}\label{sec:non-equispaced_sampling_points}
We begin by examining non-equispaced collocation points. Let us remain in the case of a smooth boundary $\Gamma$ for this part (i.e. without loss of generality $\Gamma=[0,1)$) and consider a general sequence of collocation points
\begin{align*}
    \Delta_M=\Delta_{M(N)}=\left\{0\leq x_1<\cdots<x_M<1\right\}.
\end{align*}
We now assume nothing more than the requirement that the maximum spacing of consecutive collocation points,
\begin{align*}
    \mathrm{d}\left(\Delta_M\right)=\max_{1\leq j\leq N}|x_{j+1}-x_{j}|
\end{align*}
reduces to $0$ in a certain way as $N\rightarrow\infty$. Here it is again understood that $x_{N+1}=x_1$ and the distance is measured on the periodic domain $[0,1)$. We have the following error estimate for the discrete inner product:
\begin{lemma}\label{lem:error_estimate_trapezoidal_rule_general_sampling} Let $\Gamma, \Delta_M(N)$ be as above. Then there is a constant $C_{r,s}>0$, independent of $\Delta_M$, such that for any $f,g\in H^{\max\{r,s\}}$ with $r>5/2,s>1/2$:
    \begin{align}\label{eqn:error_estimate_trapezoidal_rule_non_equispaced}
        |\langle f,g\rangle-\langle f,g\rangle_M|\leq C_{r,s} M\mathrm{d}(\Delta_M)^3\left(\|f\|_{H^r}\|g\|_{H^s}+\|f\|_{H^s}\|g\|_{H^r}\right).
    \end{align}
\end{lemma}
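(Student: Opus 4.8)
The plan is to recognise the weighted discrete inner product $\langle\cdot,\cdot\rangle_M$ as the periodic composite trapezoidal rule applied to the product $F:=\overline{f}g$, and then to control the quadrature error by a Peano-kernel representation combined with product estimates. First I would observe that, with the weights $w_m=(x_{m+1}-x_{m-1})/2$ and the wraparound convention, collecting the elementary trapezoidal contributions on the subintervals $[x_j,x_{j+1}]$ by node gives
\[
\langle f,g\rangle_M=\sum_{j=1}^M w_j\,\overline{f(x_j)}g(x_j)=\sum_{j=1}^M\frac{h_j}{2}\bigl(F(x_j)+F(x_{j+1})\bigr)=\int_0^1 (\Pi F)(x)\,dx,
\]
where $h_j=x_{j+1}-x_j$ and $\Pi F$ is the periodic piecewise-linear interpolant of $F$ at the nodes of $\Delta_M$. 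Hence the quadrature error is exactly the integrated interpolation error, $\langle f,g\rangle-\langle f,g\rangle_M=\int_0^1(F-\Pi F)\,dx$.

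Next I would invoke the elementary trapezoidal Peano-kernel identity on each subinterval to write
\[
\langle f,g\rangle-\langle f,g\rangle_M=-\int_0^1 P(t)\,F''(t)\,dt,\qquad P(t)=\tfrac12 (t-x_j)(x_{j+1}-t)\ \text{on }[x_j,x_{j+1}],
\]
where $P\ge 0$ and the only feature of $P$ I need is the bound on its mass, $\|P\|_{L^1}=\sum_{j=1}^M h_j^3/12\le M\,\mathrm{d}(\Delta_M)^3/12$, obtained by crudely counting $M$ intervals each of length at most $\mathrm{d}(\Delta_M)$. This already yields $|\langle f,g\rangle-\langle f,g\rangle_M|\le \tfrac{1}{12}M\,\mathrm{d}(\Delta_M)^3\,\|F''\|_{L^\infty}$, so it remains to bound $\|F''\|_{L^\infty}=\|(\overline{f}g)''\|_{L^\infty}$ by the stated mixed Sobolev norms.

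The decomposition $F''=\overline{f}''g+2\overline{f}'g'+\overline{f}g''$ makes the two \emph{pure} terms immediate: by the Sobolev embeddings $H^r\hookrightarrow C^2$ (valid since $r>5/2$) and $H^s\hookrightarrow C^0$ (valid since $s>1/2$) one has $\|f''\|_\infty\|g\|_\infty\le C\|f\|_{H^r}\|g\|_{H^s}$ and $\|f\|_\infty\|g''\|_\infty\le C\|f\|_{H^s}\|g\|_{H^r}$, with $f,g\in H^{\max\{r,s\}}$ guaranteeing that all these pointwise quantities are finite. The main obstacle is the cross term $2\,\overline{f}'g'$: with only $s>1/2$ the factor $\|g'\|_\infty$ is not controlled by $\|g\|_{H^s}$, and a naive pointwise bound produces an unwanted $\|f\|_{H^r}\|g\|_{H^r}$ that does not fit the required symmetric right-hand side. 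I would resolve this using the Landau--Kolmogorov (Hadamard) interpolation inequality for periodic functions, $\|\phi'\|_\infty^2\le C\|\phi\|_\infty\|\phi''\|_\infty$, which gives
\[
\|f'\|_\infty\|g'\|_\infty\le C\bigl(\|f\|_\infty\|f''\|_\infty\bigr)^{1/2}\bigl(\|g\|_\infty\|g''\|_\infty\bigr)^{1/2}\le \tfrac{C}{2}\bigl(\|f''\|_\infty\|g\|_\infty+\|f\|_\infty\|g''\|_\infty\bigr),
\]
the last step by the arithmetic--geometric mean inequality. Thus the cross term is reabsorbed into exactly the same two pure terms, and combining everything yields $\|F''\|_\infty\le C_{r,s}\bigl(\|f\|_{H^r}\|g\|_{H^s}+\|f\|_{H^s}\|g\|_{H^r}\bigr)$ with $C_{r,s}$ depending only on $r,s$ through the embedding and interpolation constants and not on $\Delta_M$. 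Substituting this into the mass bound for $P$ gives the claim. The one point warranting care is the validity of the Landau--Kolmogorov inequality on the periodic domain, which I would justify directly (for instance by a Fourier/interpolation argument), together with the observation that $\max\{r,s\}$-regularity of both $f$ and $g$ is precisely what is needed to make every pointwise norm appearing above finite.
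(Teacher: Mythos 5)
Your proof is correct, and for the quadrature part it coincides with the paper's: both identify the weighted sum with the periodic composite trapezoidal rule applied to $F=\overline{f}g$ and sum the elementary $C^2$ error bound over the $M$ subintervals to extract the factor $M\,\mathrm{d}(\Delta_M)^3\|F''\|_{L^\infty}$. The genuine difference lies in how $\|F''\|_{L^\infty}$ is converted into the mixed right-hand side. The paper bounds $\|F''\|_{L^\infty}\lesssim \|F\|_{H^r}$ by Morrey's inequality applied to the \emph{product} and then invokes the Fourier-side bilinear estimate $\|\overline{f}g\|_{H^r}\lesssim \|f\|_{H^r}\|g\|_{H^s}+\|f\|_{H^s}\|g\|_{H^r}$ already proved in Appendix~\ref{app:proof_error_bound_trapezoidal_rule} (splitting $|n|^r\lesssim |n-m|^r+|m|^r$ and using Minkowski's inequality), so it never differentiates the product and reuses machinery shared with Lemma~\ref{lem:bound_equispaced_trapezoidal_rule_L^2_product}. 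You instead expand $F''$ by the Leibniz rule, handle the pure terms with the embeddings $H^r\hookrightarrow C^2$ and $H^s\hookrightarrow C^0$, and correctly isolate the cross term $2\overline{f}'g'$ as the only obstruction, resolving it with Landau--Kolmogorov plus the arithmetic--geometric mean inequality. That step is sound: a $1$-periodic $C^2$ function is a bounded $C^2$ function on $\mathbb{R}$, so $\|\phi'\|_\infty^2\leq 2\|\phi\|_\infty\|\phi''\|_\infty$ applies verbatim; if you prefer to stay on the Fourier side, the Cauchy--Schwarz interpolation $\|\phi\|_{H^{(r+s)/2}}\leq\|\phi\|_{H^r}^{1/2}\|\phi\|_{H^s}^{1/2}$ together with $\|\phi'\|_\infty\lesssim\|\phi\|_{H^{(r+s)/2}}$ (valid since $r+s>3$) yields the same reabsorption without importing any new inequality. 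Your route is more elementary and self-contained at the pointwise level; the paper's is shorter on the page because the product estimate is recycled from the equispaced case, and it avoids tracking cross terms, which would become more numerous if one pushed the argument to higher-order quadrature rules.
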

\begin{proof}
    The error estimate is based on Morrey's inequality and the well-known error-expression for the trapezoidal rule for $C^2$-functions. For completeness a proof is included in Appendix~\ref{app:error_trapezoidal_rule_non-uniform_grid}.
\end{proof}
The estimate in \eqref{eqn:error_estimate_trapezoidal_rule_non_equispaced} is precisely of the form \eqref{eqn:general_error_estimate_trapezoidal_rule}, which allows us to apply Theorems \ref{thm:convergence_in_energy_space} \& \ref{thm:discrete_aubin_nitsche} for the following result:
\begin{corollary}\label{cor:convergence_estimates_for_collocation_non-equispaced_points}
If for some $\epsilon>0$ we have $5/2+\epsilon\leq m+1/2-2\alpha$, and $\Delta_M=\Delta_{M(N)}$ is chosen such that
\begin{align}\label{eqn:mesh_condition_for_energy_space_convergence}
    \lim_{N\rightarrow\infty}\mathrm{d}(\Delta_M)^3h^{-3-\epsilon}M=0,
\end{align}
then there is a constant $C>0$ independent of $M,N,u$ such that 
	\begin{align*}
		\|u_N^{(M)}-u\|_{H^{2\alpha}}\leq C h^{l-2\alpha}\|u\|_{H^{l}}.
	\end{align*}
If in addition we have for some $-l\leq t\leq 5/2+\epsilon+2\alpha$
\begin{align}\label{eqn:mesh_condition_for_Aubin_Nitsche_convergence}
    \lim_{N\rightarrow\infty}\mathrm{d}(\Delta_M)^3h^{2\alpha-t-5/2-\epsilon}=0,
\end{align}
then there is a constant $C>0$ independent of $M,N,u$ such that 
	\begin{align*}
		\|u_N^{(M)}-u\|_{H^{t+4\alpha}}\leq C h^{l-t-4\alpha}\|u\|_{H^{l}}.
	\end{align*}
\end{corollary}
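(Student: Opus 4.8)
The plan is to read Lemma~\ref{lem:error_estimate_trapezoidal_rule_general_sampling} as a concrete instance of the abstract quadrature estimate~\eqref{eqn:general_error_estimate_trapezoidal_rule}, after which the corollary becomes a direct application of Theorem~\ref{thm:convergence_in_energy_space} and Theorem~\ref{thm:discrete_aubin_nitsche}. Indeed, the bound~\eqref{eqn:error_estimate_trapezoidal_rule_non_equispaced} is exactly of the form~\eqref{eqn:general_error_estimate_trapezoidal_rule} with $\mathcal{E}_{r,s}(\Delta_M)=C_{r,s}\,M\,\mathrm{d}(\Delta_M)^3$, now valid only on the restricted range $r>5/2$, $s>1/2$. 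The whole argument therefore reduces to selecting admissible pairs $(r,s)$ close to the thresholds $r=5/2$, $s=1/2$ and checking that, for these choices, the abstract order constraints and limit conditions of the two theorems collapse onto the stated mesh conditions~\eqref{eqn:mesh_condition_for_energy_space_convergence} and~\eqref{eqn:mesh_condition_for_Aubin_Nitsche_convergence}.

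For the energy-space bound I would take $r=5/2+\epsilon/2$ and $s=1/2+\epsilon/2$, so that $\max\{r,s\}=r<5/2+\epsilon\le m+1/2-2\alpha$ — which is precisely the order hypothesis of Theorem~\ref{thm:convergence_in_energy_space} (strict, which is exactly why $\epsilon>0$ and why $r,s$ are taken strictly interior) — while $r+s=3+\epsilon$. With this choice the theorem's limit requirement reads $\mathcal{E}_{r,s}(\Delta_M)h^{-(r+s)}=C_{r,s}\,M\,\mathrm{d}(\Delta_M)^3 h^{-3-\epsilon}\to 0$, i.e. exactly~\eqref{eqn:mesh_condition_for_energy_space_convergence} up to the harmless constant $C_{r,s}$. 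Theorem~\ref{thm:convergence_in_energy_space} then yields $\|u_N^{(M)}-u\|_{H^{2\alpha}}\le C h^{l-2\alpha}\|u\|_{H^l}$.

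For the Aubin--Nitsche bound I would instead push $r$ up to $r=5/2+\epsilon$ (keeping $s\in(1/2,r]$), so that $\max\{r,s\}=5/2+\epsilon$ and the exponent $2\alpha-t-\max\{r,s\}=2\alpha-t-5/2-\epsilon$ matches the one in~\eqref{eqn:mesh_condition_for_Aubin_Nitsche_convergence}. The order hypothesis of Theorem~\ref{thm:discrete_aubin_nitsche}, namely $\max\{r,s\}<\min\{m+1/2,-t\}-2\alpha$, then has to be read off from the standing assumption $5/2+\epsilon\le m+1/2-2\alpha$ together with the admissible range of $t$, and its limit condition $\mathcal{E}_{r,s}(\Delta_M)h^{2\alpha-t-\max\{r,s\}}\to 0$ must be reduced to~\eqref{eqn:mesh_condition_for_Aubin_Nitsche_convergence}. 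Invoking Theorem~\ref{thm:discrete_aubin_nitsche} then produces $\|u_N^{(M)}-u\|_{H^{t+4\alpha}}\le C h^{l-t-4\alpha}\|u\|_{H^l}$ for each admissible $t$.

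The main obstacle — essentially the only non-mechanical point — is the bookkeeping that ties the factor $M$ carried by $\mathcal{E}_{r,s}(\Delta_M)=C_{r,s}\,M\,\mathrm{d}(\Delta_M)^3$ to the $\mathrm{d}(\Delta_M)^3$-type mesh conditions in the statement. For the Aubin--Nitsche part one must control $M\,\mathrm{d}(\Delta_M)^3 h^{2\alpha-t-5/2-\epsilon}$, and since~\eqref{eqn:mesh_condition_for_Aubin_Nitsche_convergence} carries no factor $M$, the natural route is to factor it as $\bigl(M\,\mathrm{d}(\Delta_M)^3 h^{-3-\epsilon}\bigr)\,h^{2\alpha-t+1/2}$ and use the already-assumed energy-space condition~\eqref{eqn:mesh_condition_for_energy_space_convergence} to annihilate the first factor while the residual power $h^{2\alpha-t+1/2}$ stays bounded. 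This last point is what forces genuine care with the admissible range of $t$ and with the strict-versus-nonstrict inequalities at the endpoints $r=5/2$, $s=1/2$ and $5/2+\epsilon=m+1/2-2\alpha$; I would verify these edge cases explicitly, as they are where the argument is most delicate.
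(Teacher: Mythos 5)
Your proposal is correct and coincides with the paper's own (essentially one-line) argument: the paper simply observes that Lemma~\ref{lem:error_estimate_trapezoidal_rule_general_sampling} puts $\mathcal{E}_{r,s}(\Delta_M)=C_{r,s}\,M\,\mathrm{d}(\Delta_M)^3$ into the form \eqref{eqn:general_error_estimate_trapezoidal_rule} and then invokes Theorems~\ref{thm:convergence_in_energy_space} and~\ref{thm:discrete_aubin_nitsche} with $r,s$ taken just above the thresholds $5/2$ and $1/2$, exactly as you do. Your remark about the factor $M$ missing from \eqref{eqn:mesh_condition_for_Aubin_Nitsche_convergence} is the right point to press on, and your factorisation $M\,\mathrm{d}(\Delta_M)^3h^{2\alpha-t-5/2-\epsilon}=\bigl(M\,\mathrm{d}(\Delta_M)^3h^{-3-\epsilon}\bigr)h^{2\alpha-t+1/2}$ closes the gap whenever $t\leq 2\alpha+1/2$, which holds for all $t$ admissible under the hypotheses of Theorem~\ref{thm:discrete_aubin_nitsche} for the operators treated in the paper.
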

These results guarantee convergence of oversampled collocation methods even when the collocation points are very badly chosen - the only condition for success is that, as $N$ increases, the collocation points are distributed sufficiently uniformly to provide a good approximation to the $L^2$-inner product in the sense of \eqref{eqn:mesh_condition_for_energy_space_convergence} \& \eqref{eqn:mesh_condition_for_Aubin_Nitsche_convergence}. Indeed one may see Corollary~\ref{cor:convergence_estimates_for_collocation_non-equispaced_points} as confirmation that, in settings when standard collocation fails, a small amount of oversampling can help resolve the convergence issues. We will see this to be the case in practice in \S\ref{sec:suboptimal_collocation_points}.
\begin{remark}
Although this is not a main focus of the current work we highlight that through a similar argument to Proposition~\ref{prop:uniform_ellipticity} it can be seen that conditioning of the weighted normal equations \eqref{eqn:general_least_squares_system} depends (if sufficiently many collocation points are taken) mainly on the properties of the basis functions $S_N$. So if the Bubnov-Galerkin method \eqref{eqn:bubnov_galerkin_method} is well-conditioned we expect that for sufficient oversampling the linear system \eqref{eqn:general_least_squares_system} is also well-conditioned.
\end{remark}

\subsection{Lipschitz domains}\label{sec:polygonal_domains}
So far we focused on smooth domains, but in this section we aim to show on a more specific case that our results in Theorem~\ref{thm:convergence_in_energy_space} and Corollary~\ref{cor:convergence_on_equispaced_grids_spline_basis} extend to Lipschitz domains. This is of interest because previous analyses of collocation methods in the literature have focused on the smooth case $\Gamma$ (e.g. \cite{arnold1983asymptotic,saranen1985asymptotic,sloan1988quadrature}). In the process of extending our results to less regular boundaries more care must be given to the continuity properties of the integral operators (similar to the analysis of any Galerkin method on similar domains) and whether the boundary Sobolev spaces are well-defined, all of which needs to be considered on a case-by-case basis. For that reason and for simplicity we shall focus on the single layer integral equation on a Lipschitz (e.g. polynomial) boundary for the Dirichlet problem of the Helmholtz equation. This means in the above notation that the parametrisation $z:[0,1)\rightarrow \Gamma$ as in \eqref{eqn:parametrisation_of_boundary} is a Lipschitz function with Lipschitz inverse (piecewise linear in the case of polygonal boundaries) and our integral equation is
\begin{align}\label{eqn:single_layer_integral_equation}
    \mathcal{S} u=f
\end{align}
where the single layer potential $S$ is as defined in Example~\ref{ex:Helmholtz_boundary_integral_formulation}. It was shown in Appendix~A.3 of \cite{chandler-wilde_graham_langdon_spence_2012} that a sensible definition of boundary Sobolev spaces for Lipschitz domains $\Gamma$ is in terms of the parametrisation $z$:
\begin{align*}
    H^{s}(\Gamma):=\left\{f\in L^2(\Gamma)\Big|f\circ z\in H^{s}([0,1))\right\},
\end{align*}
which is a consistent definition for all $0\leq s\leq 1$ (in that it has all the usual properties of Sobolev spaces, including $H^s\subset H^t$ for $s<t$, and $C^{\infty}(\mathbb{R}^2)\big|_{\Gamma}$ is dense in $H^s$). The dual spaces can be defined similarly, and we can therefore, as before, limit our attention to the spaces $H^s([0,1))$ with
\begin{align*}
    \|f\|_{H^s}&=\|f\|_{H^s([0,1))}:=\left(|\hat{f}_0|^2+\sum_{0\neq m\in\mathbb{Z}}|m|^{2s}|\hat{f}_m|^2\right)^{1/2},
\end{align*}
where
\begin{align*}
    \hat{f}_m:=\int_0^1e^{-2\pi i m t}f(t)dt,
\end{align*}
\textit{with the additional restriction} that $|s|\leq 1$. It is then shown in Theorem~2.25 of \cite{chandler-wilde_graham_langdon_spence_2012} (see also \cite[p. 1466]{torres1993}) that 
\begin{align*}
	\mathcal{S}:H^{s+1/2}(\Gamma)\rightarrow H^{s-1/2}(\Gamma)
\end{align*}
for $|s|\leq 1/2$ is bounded linear and furthermore that $\mathcal{S}$ is invertible as a mapping between the spaces indicated above for $|s| \leq 1/2$ if and only if the interior homogeneous Dirichlet problem of the associated Helmholtz equation only has the trivial solution. We note by \cite[p. 1466]{torres1993} that $\mathcal{S}:L^{2}(\Gamma)\rightarrow L^2(\Gamma)$ is compact which means that by the spectral theorem $\mathcal{S}$ must be invertible except for a countably infinite set of values for the wavenumber in the Helmholtz equation - the resonant frequencies of the interior Dirichlet problem. We now look back at the proof of Proposition~\ref{prop:uniform_ellipticity} and Theorem~\ref{thm:convergence_in_energy_space} and notice that in all estimates the only norm bounds on $V$ and its inverse appeared as 
\begin{align*}
    \|V^{-1}\|_{2\alpha\rightarrow 0},\|V\|_{r+2\alpha\rightarrow r},\|V\|_{s+2\alpha\rightarrow s}
\end{align*}
which means that as long as $0\leq r,s\leq 1$ these terms are still bounded in the Lipschitz case. Furthermore in the upper bound \eqref{eqn:upper_bound_energy_space_estimate} the any term of the form $\|u-\chi_N\|_{H^{t}}$ has $t\leq 0$ (by virtue of $r,s\leq 1=-2\alpha$). Thus, the conclusions of these two statements remain true and we have
\begin{corollary}\label{cor:convergence_estimates_Lipschitz_domains}
For $\Gamma$ Lipschitz and $V=\mathcal{S}$, suppose \eqref{eqn:general_error_estimate_trapezoidal_rule} holds for $s,r$ with $0\leq r,s<\min\{m+3/2,1\}$, and $\Delta_M=\Delta_{M(N)}$ is chosen such that
\begin{align*}
    \lim_{N\rightarrow\infty}\mathcal{E}_{r,s}(\Delta_M)h^{-(r+s)}=0,
\end{align*}
then there is a constant $C>0$ independent of $M,N,\tilde{u}$ such that for all $t\geq 0$ 
	\begin{align*}
		\|u_N^{(M)}-\tilde{u}\|_{H^{-1}}\leq C h^{\min\{l,t\}+1}\|\tilde{u}\|_{H^{t}}.
	\end{align*}
	This means the optimal convergence rate in $H^{-1}$ is achieved.
\end{corollary}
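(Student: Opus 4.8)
The plan is to read this corollary as Theorem~\ref{thm:convergence_in_energy_space} transported to the Lipschitz setting with $V=\mathcal{S}$, so that $2\alpha=-1$ and the energy space $H^{2\alpha}$ is exactly $H^{-1}$ (Example~\ref{ex:Helmholtz_boundary_integral_formulation}), and then to sharpen the final approximation step so that the true solution need only lie in $H^{t}$ for an arbitrary $t\ge 0$ rather than in $H^{l}$. First I would verify, as the paragraph preceding the corollary already indicates, that the proofs of Proposition~\ref{prop:uniform_ellipticity} and Theorem~\ref{thm:convergence_in_energy_space} use the integral operator only through the three operator-norm constants displayed just before the corollary. Each of these is an operator norm of $\mathcal{S}$, or of its inverse, between boundary Sobolev spaces whose indices lie in $[-1,1]$ as soon as $0\le r,s\le 1$ (e.g. $\mathcal{S}:H^{r-1}\to H^{r}$ and $\mathcal{S}:H^{-1}\to L^2$); by the Lipschitz mapping and invertibility result cited (Theorem~2.25 of \cite{chandler-wilde_graham_langdon_spence_2012}, valid away from interior Dirichlet resonances) all three are finite, so uniform $S_N$-ellipticity in $H^{-1}$ holds verbatim under the stated decay $\mathcal{E}_{r,s}(\Delta_M)h^{-(r+s)}\to 0$.

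With ellipticity available I would rerun the Strang chain \eqref{eqn:upper_bound_energy_space_estimate}, invoking the discrete orthogonality condition \eqref{eqn:discrete_orthogonality_condition}, the quadrature estimate \eqref{eqn:general_error_estimate_trapezoidal_rule}, and the inverse property (Definition~\ref{def:inverse_property}), together with the decay hypothesis, to reach
\begin{align*}
\tilde\gamma\,\|u_N^{(M)}-\chi_N\|_{H^{-1}}\le \|u-\chi_N\|_{H^{-1}}+h^{r}\|u-\chi_N\|_{H^{r-1}}+h^{s}\|u-\chi_N\|_{H^{s-1}}
\end{align*}
for every admissible $\chi_N\in S_N$. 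The essential bookkeeping is that each Sobolev index appearing here --- the targets $-1,\,r-1,\,s-1$ and the sources $r-1,\,s-1$ fed into the inverse property --- must remain inside the band $[-1,1]$, the only range in which the Lipschitz boundary Sobolev spaces and the mapping properties of $\mathcal{S}$ are defined. This is precisely what dictates the hypothesis $0\le r,s<\min\{m+3/2,1\}$: the branch $m+3/2=m+1/2-2\alpha$ is the usual inverse-property constraint of Theorem~\ref{thm:convergence_in_energy_space}, while the branch $1$ enforces the domain restriction and, through $r,s\le 1=-2\alpha$, also guarantees that every error term $\|u-\chi_N\|_{H^{\cdot}}$ above carries a nonpositive index, so that no Sobolev norm of index above $1$ is ever needed.

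The single new ingredient is the concluding appeal to the approximation property. Rather than taking the source smoothness to be $l$, I would apply the uniform form of Definition~\ref{def:approximation_property} with source index $s_\star:=\min\{l,t\}$, choosing one sequence $\chi_h$ that simultaneously controls $\|u-\chi_h\|_{H^{t'}}$ for all three targets $t'\in\{-1,r-1,s-1\}\subset[-1,0]$; this is admissible because $t'\le 0\le s_\star\le l$ and $t'=-1<m+1/2$. Every term then acquires the common factor $h^{s_\star+1}$, giving $\|u_N^{(M)}-\tilde u\|_{H^{-1}}\le C\,h^{s_\star+1}\|\tilde u\|_{H^{s_\star}}$, after which the embedding $\|\tilde u\|_{H^{s_\star}}\le\|\tilde u\|_{H^{t}}$ (valid since $s_\star\le t$) produces the claimed bound $C\,h^{\min\{l,t\}+1}\|\tilde u\|_{H^{t}}$. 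I expect the main obstacle to be exactly this index accounting rather than any new analysis: one must check that the admissible window $[-1,1]$ is nowhere violated in the ellipticity step, the inverse-property step, or the approximation step, and in particular that $s_\star=\min\{l,t\}$ keeps the source smoothness within the range where $H^{s_\star}(\Gamma)$ is well-defined on the Lipschitz curve --- which is what ties the attainable rate to $\min\{l,t\}$ rather than to $l$ alone.
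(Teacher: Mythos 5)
Your proposal is correct and follows essentially the same route as the paper: the paper's justification is precisely the observation that the proofs of Proposition~\ref{prop:uniform_ellipticity} and Theorem~\ref{thm:convergence_in_energy_space} touch $V$ only through the three operator norms $\|V^{-1}\|_{2\alpha\to 0}$, $\|V\|_{r+2\alpha\to r}$, $\|V\|_{s+2\alpha\to s}$, which for $2\alpha=-1$ and $0\leq r,s\leq 1$ involve only Sobolev indices in $[-1,1]$ where the Lipschitz mapping and invertibility results for $\mathcal{S}$ apply, and that every term $\|u-\chi_N\|_{H^{t'}}$ in \eqref{eqn:upper_bound_energy_space_estimate} has $t'\leq 0$. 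Your additional bookkeeping for the final approximation step with source index $\min\{l,t\}$ is consistent with (and slightly more explicit than) what the paper states.
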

To make a more specific conclusion we can combine this with the estimate in Lemma~\ref{lem:bound_equispaced_trapezoidal_rule_L^2_product} to show that for equispaced sampling on spline spaces convergence in $H^{-1}$ is achieved if 
\begin{align*}
   M=M(N)=N^\beta,\quad \text{for\ some\ }\beta>3/2.
\end{align*}
We note moreover that it is well-known that for smooth boundary conditions on polygonal scatterers the solution $\tilde{u}$ to the Dirichlet Problem~\eqref{eqn:single_layer_integral_equation} has specific corner singularities (see Theorem~2.3 in \cite{chandler2007galerkin}), which means its regularity for arbitrary boundary conditions in $C^{\infty}_0(\mathbb{R}^2)\big|_\Gamma$ restricted to
\begin{align}\label{eqn:regularity_of_solution_polygonal_scatterers}
    \tilde{u}\in H^{s}(\Gamma)\quad \text{if\ and\ only\ if\ }s<-1/2+\pi/\max_j{\Omega_j}
\end{align}
where $\Omega_j, j=1,\dots, 2K,$ are the interior and exterior angles of the polygon.

\section{Numerical results}
\label{sec:numerical_results}

In this section we test the aforementioned theoretical results through numerical experiments. In the following examples we consider the two types of integral opertators introduced in Example~\ref{ex:Helmholtz_boundary_integral_formulation} - the single and double layer potential formulations for the Helmholtz equation as examples of integral operators with orders $2\alpha=-1$ and $2\alpha=0$ respectively.

\subsection{Smooth domains with equispaced sampling}
We begin by verifying that the results in Theorem~\ref{thm:convergence_rates_equispaced_grids} are indeed tight and accurately predict the effect of oversampling for equispaced grids and matching collocation points as defined in \eqref{eqn:def_equispaced_matching_collocation_points}. In the first example we present results for the single layer potential formulation
\begin{align*}
    \mathcal{S}u=f,
\end{align*}
where $u$ represents the normal derivative of the field $\partial_n\phi$ on the boundary $\Gamma$. We choose $\Gamma=\{|x|=1\}$, the unit circle, and wavenumber $k=4.2$. For this domain an exact reference solution $\tilde{u}$ in terms of Bessel functions is available (cf. pp. 501--504 in \cite{morse1953methods}, see also \cite{weisstein2021}). Therefore we can evaluate the error directly in the Sobolev norms $\|\cdot\|_{H^{\Gamma}}$ which we compute using the expression in terms of Fourier modes as in \eqref{eqn:definition_sobolev_norms_in_terms_of_fourier_modes}. In Fig.~\ref{fig:sobolev_errors_for_smooth_case} we display the Sobolev error $\|\tilde{u}-u_N\|_{H^s}$ and the error in a field point $|\mathcal{S}(\tilde{u}-u_N^{(M)})(x)|$ (the latter of which converges at the fastest rate of any Sobolev error as per \eqref{eqn:estimate_field_point_error_in_terms_of_sobolev_norms}) for the following quantities:
\begin{itemize}
    \item $H^s$-projection: The orthogonal projection of $\tilde{u}$ onto $S_N$ with respect to the inner product $\langle\cdot,\cdot \rangle_s$.
    \item Galerkin method: The solution of the continuous Galerkin equations
    \begin{align*}
        \langle \chi_N,Vu_n\rangle=\langle\chi_N,f\rangle,\quad \forall\chi_N\in S_N.
    \end{align*}
    \item Collocation - $M=N$: The standard collocation method at equispaced points, $x_m=m/M$.
    \item Collocation - $M=N\lfloor N^{1/2}\rfloor$, $M=N^2$: The oversampled collocation method at equispaced points with the appropriate rates of oversampling.
\end{itemize}

\begin{figure}[t!]
	\centering
	\begin{subfigure}{0.49\textwidth}
				\centering
		\includegraphics[width=0.99\linewidth]{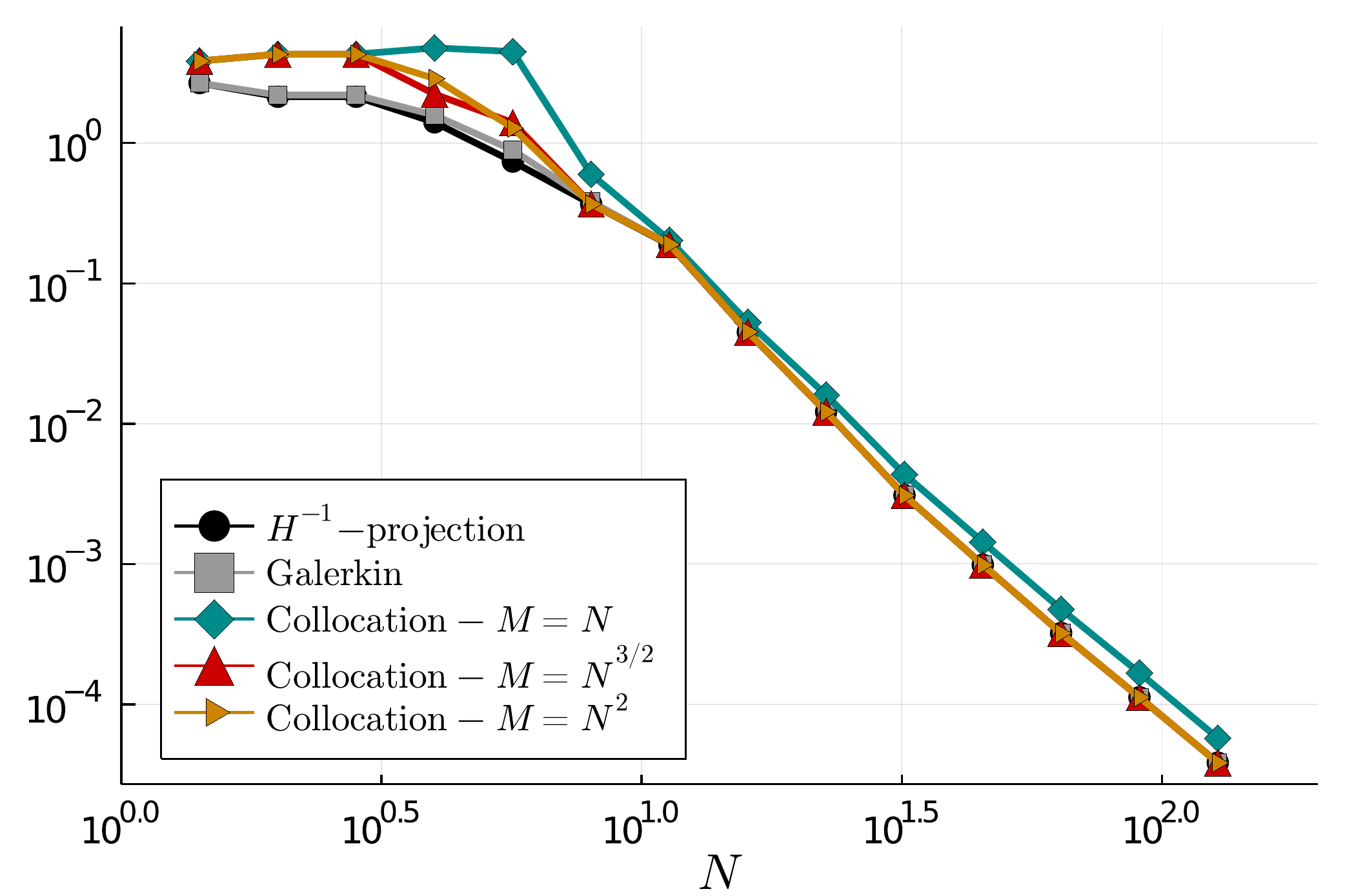}
		\caption{$\|u-u_N^{(M)}\|_{H^{-1}}$.}
		\label{fig:sobolev_errors_for_smooth_case_s-1}
	\end{subfigure}%
	\begin{subfigure}{0.49\textwidth}
		\centering
		\includegraphics[width=0.99\linewidth]{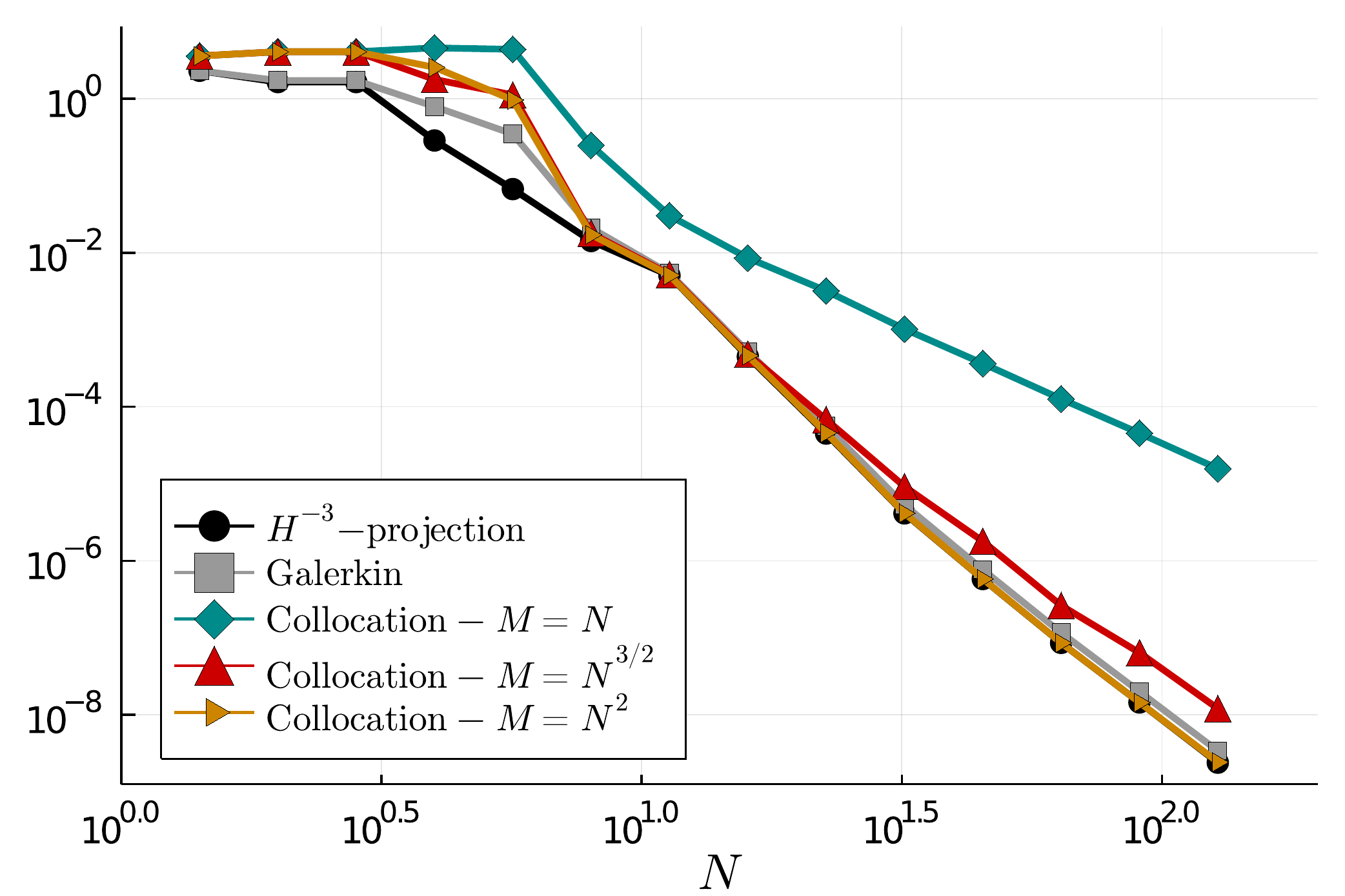}
		\caption{$\|u-u_N^{(M)}\|_{H^{-3}}$.}
		\label{fig:sobolev_errors_for_smooth_case_s-3}
	\end{subfigure}\\
	\begin{subfigure}{0.49\textwidth}
		\centering
\includegraphics[width=0.99\linewidth]{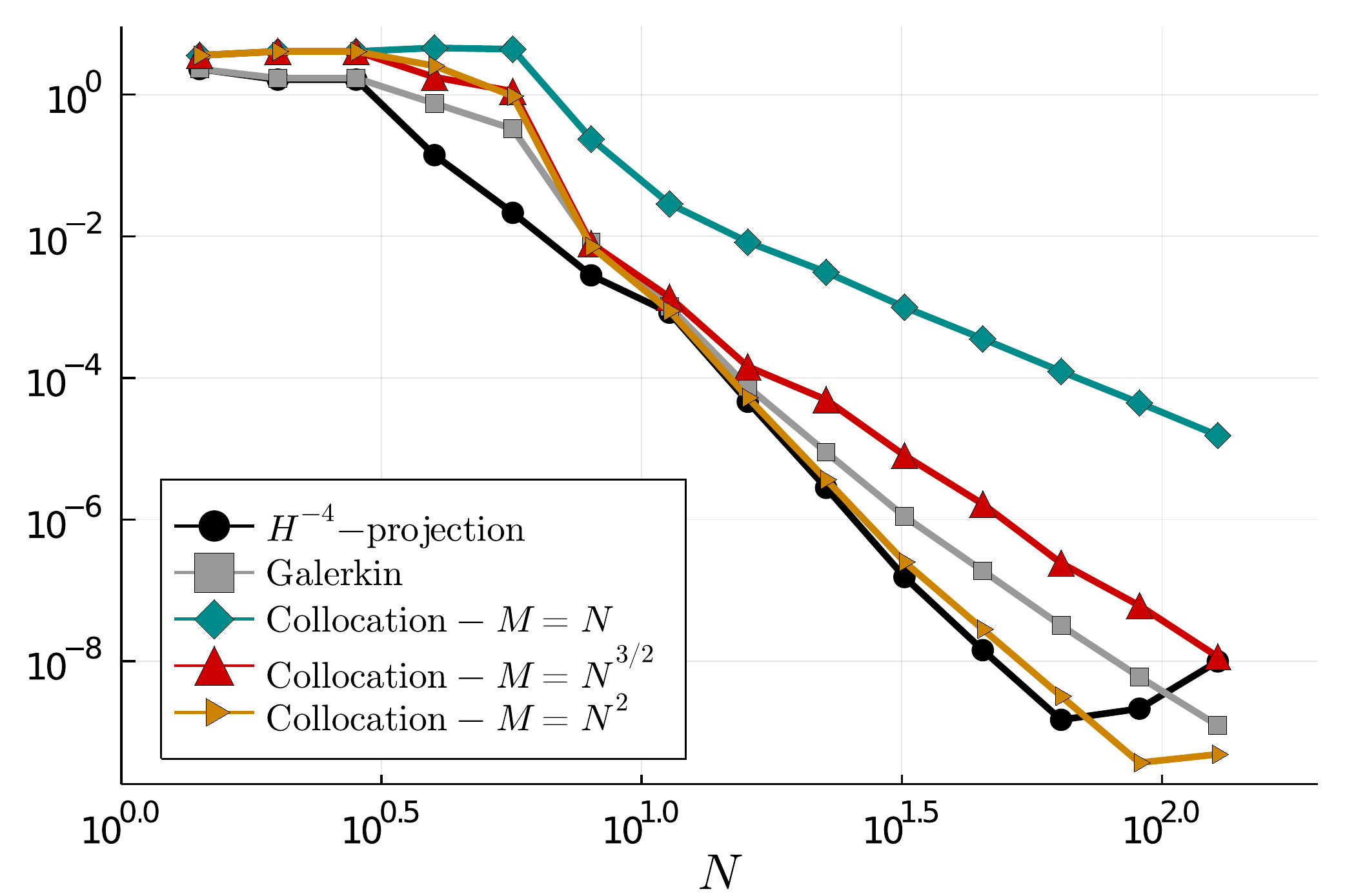}
\caption{$\|u-u_N^{(M)}\|_{H^{-4}}$.}
\label{fig:sobolev_errors_for_smooth_case_s-4}
	\end{subfigure}%
		\begin{subfigure}{0.49\textwidth}
		\centering
\includegraphics[width=0.99\linewidth]{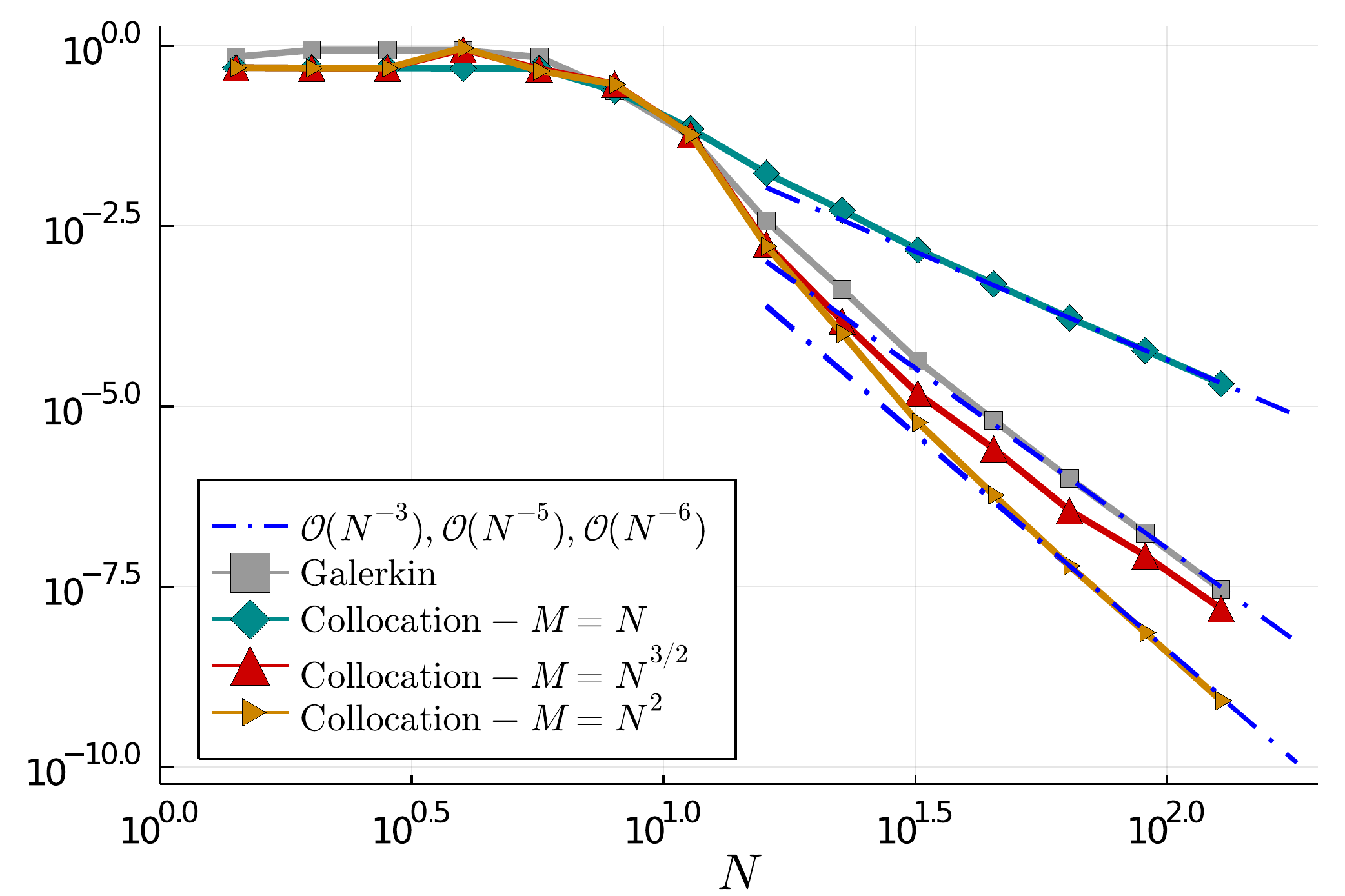}
\caption{Error in a field point $|\mathcal{S}(\tilde{u}-u_N^{(M)})(x)|$.}
\label{fig:error_field_point_scattering_on_circle}
	\end{subfigure}\\
	\caption{Error in the numerical method $\|u-u_N^{(M)}\|_{H^{s}}$ for a smooth circular scatterer, using the single layer potential (order $2\alpha = -1$) and linear splines ($d=1$). Convergence is seen in Sobolev spaces of low order, with the fastest convergence occurring at $s=4\alpha -(d+1)=-4$. This rate is achieved with oversampled collocation and quadratic oversampling ($M=N^2$), but not with Galerkin. In (\subref{fig:error_field_point_scattering_on_circle}) the error in a field point shows the fastest convergence rate in any Sobolev norm, the blue dash-dotted lines indicate, from top to bottom, $\mathcal{O}(N^{-3})$, $\mathcal{O}(N^{-5})$  and $\mathcal{O}(N^{-6})$ respectively.\vspace{-0.1cm} }\label{fig:sobolev_errors_for_smooth_case}
\end{figure}

We use splines of degree $1$, which means in terms of earlier notation $d=1$, ${2\alpha=-1}$. We now refer to the results in \S\ref{sec:exact_expression_error_equispaced_splines}, where we showed that for operators of the pseudo-differential form \eqref{eqn:pseudo_differential_form_V} the convergence rates of the oversampled collocation method are
\begin{align*}
    \mathcal{O}\left(M^{-(d+1)+2\alpha}+N^{-2(d+1)+4\alpha}\right)=\mathcal{O}\left(M^{-3}+N^{-6}\right),
\end{align*}
and that the fastest possible rate is attained in $H^{4\alpha-(d+1)}$. Indeed, the same reasoning as in the proof of Theorem~\ref{thm:convergence_rates_equispaced_grids} actually shows the finer result that the convergence order for the oversampled collocation method in $H^{t}, 4\alpha-(d+1)\leq t\leq 2\alpha$ is
\begin{align*}
    \mathcal{O}\left(M^{-(d+1)+2\alpha}+N^{\min\{t-(d+1),-2(d+1)+4\alpha\}}\right)=\mathcal{O}\left(M^{-3}+N^{\min\{t-2,-6\alpha\}}\right).
\end{align*}
In our current numerical results the single layer potential $\mathcal{S}$ is of the form $\mathcal{S}=\mathcal{S}_0+\mathcal{K}$ where $\mathcal{S}_0$ has the form \eqref{eqn:pseudo_differential_form_V} and $\mathcal{K}$ is an integral operator with smooth kernel, meaning $\mathcal{K}:H^{s}\rightarrow H^t$ for any $s,t\in\mathbb{R}$. \newpage In Fig.~\ref{fig:sobolev_errors_for_smooth_case} we observe that the predictions of Theorem~\ref{thm:convergence_rates_equispaced_grids} seem to also apply in the present case (where the optimal convergence rates, i.e. the rates of convergence of the $H^s$-projections, are included in the thin blue dashdotted lines -- these correspond to $\mathcal{O}(N^{-3}),\mathcal{O}(N^{-5}),\mathcal{O}(N^{-6})$ respectively):
\begin{itemize}
    \item In the energy space, $H^{-1}$, the optimal convergence rates are achieved for any choice of $M= JN$, $J=J(N)\geq 1$, in particular for the standard collocation method ($J=1$), which reflects the results from \cite{arnold1983asymptotic}.
    \item In the space $H^{-3}$ the Galerkin method achieves its fastest rate of convergence $\mathcal{O}(N^{-5})$, as does the oversampled collocation method with $M=N^{1.5}$. Indeed, we also see that the standard collocation method converges at a slower rate in this norm, as expected.
    \item Finally, in the space $H^{-4}$ the oversampled collocation method with $M=N^2$ converges at the optimal rate $\mathcal{O}(N^{-6})$ as predicted by the results in \S\ref{sec:exact_expression_error_equispaced_splines}, whereas all of the other methods including Galerkin converge at slower rates.
\end{itemize}
Although we highlight that the conditions for Theorem~\ref{thm:convergence_rates_equispaced_grids} are not satisfied in the current case, our more general results in Corollaries~\ref{cor:convergence_on_equispaced_grids_spline_basis}-\ref{cor:aubin-nitsche_for_splines} do apply and would guarantee that the observed convergence rates are indeed achieved when $M\geq N^\beta$ with slightly more oversampling than is used in the example:
\begin{itemize}
    \item $\beta>1+\frac{1}{2d+1-4\alpha}=1+1/5$ for optimal convergence in $H^{-1}$,
    \item $\beta>2+\frac{1}{2d+1-4\alpha}=2+1/5$ for optimal convergence in $H^{-4}$.
\end{itemize}
\begin{remark}
We note in Fig.~\ref{fig:sobolev_errors_for_smooth_case_s-4} that the convergence rates of our methods appear to level off around $N\approx 10^2$. This is a result of numerical errors due to the ill-conditioning in the $H^{-4}$-projection matrix and the need to compute a large number of Fourier coefficients of the error function to very high accuracy to accurately compute the Sobolev norm.
\end{remark}

\begin{figure}[h!]
		\centering
\includegraphics[width=0.6\linewidth]{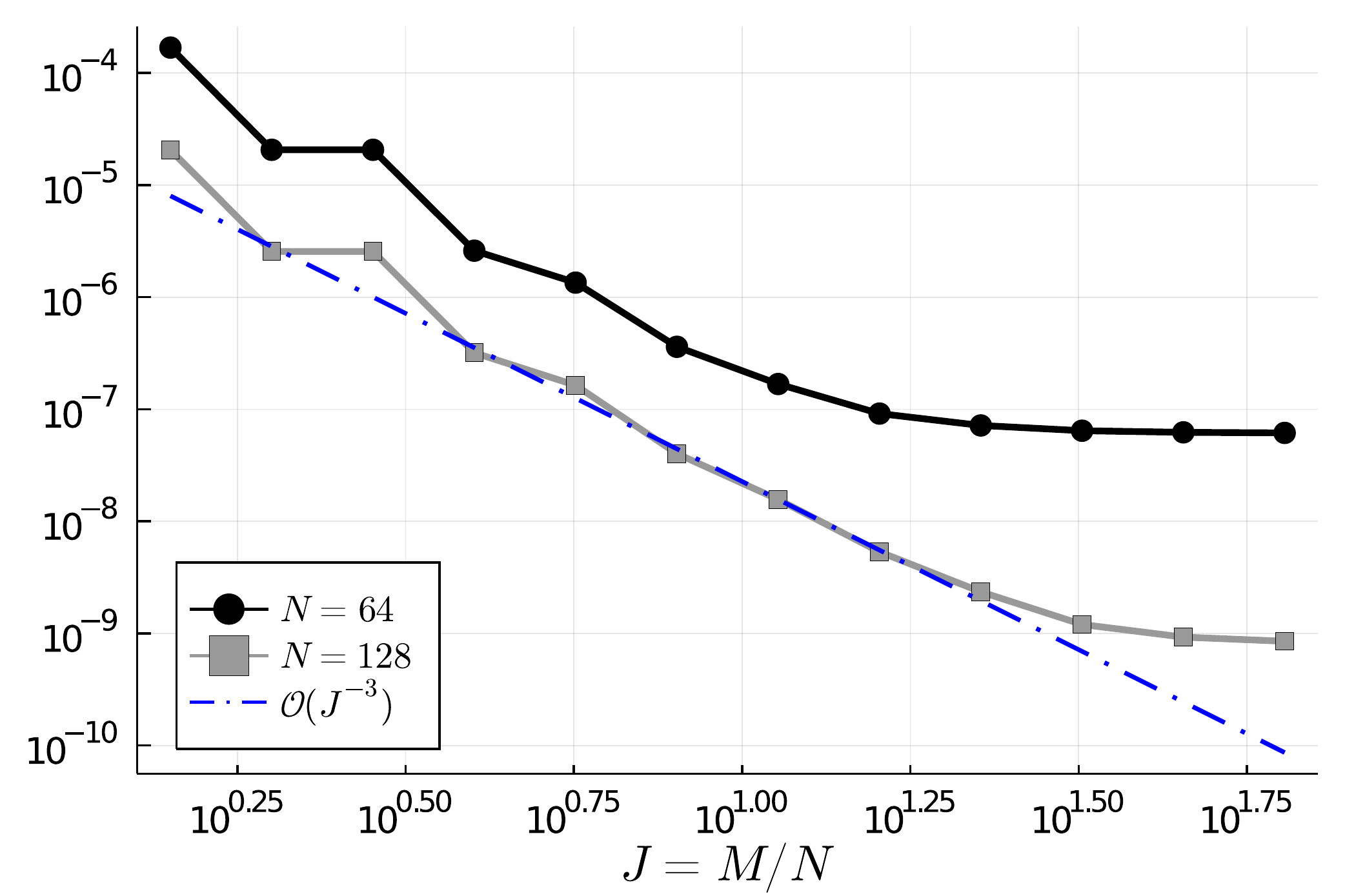}
\caption{The effect of linear oversampling for the same experiment as in Fig.~\ref{fig:sobolev_errors_for_smooth_case}. Here, $N=64,128$ and $M=JN$. For linear splines ($d=1$), \eqref{eqn:estimates_for_leading_term} predicts $O(J^{-(d+1)+2\alpha})=O(J^{-3})$ behaviour, for $1\leq J\lesssim N$.}
\label{fig:linear_oversampling_graph}
\end{figure}

Of course, faster than linear oversampling is unlikely to yield methods which are overall computationally competitive, though it cannot be ruled out a-priori that structured low-rank approximations to the system matrix in \eqref{eqn:discrete_orthogonality_condition} may (partially) offset the increased dimension of the matrix. Still, we are interested in the quantification of the benefit of linear oversampling. We highlight that, for $N$ sufficiently large, Theorem~\ref{thm:convergence_rates_equispaced_grids} actually has a consequence for linear oversampling as well. Indeed, if we fix $N$ sufficiently large and we choose $M=JN$ for a constant $J\in\mathbb{N}$, then the theorem predicts that in an initial range $1\leq J\lesssim N$ increasing $J$ results in a decay of the error of order $\mathcal{O}(J^{-(d+1)+2\alpha})=\mathcal{O}(J^{-3})$. This means, whilst linear oversampling improves the overall error just by a constant, the improvement is cubic in $J$ and so still worthwhile. The result is observed in practice as shown in Fig.~\ref{fig:linear_oversampling_graph}.

\subsection{Suboptimal collocation points}\label{sec:suboptimal_collocation_points}
Having understood the effect of oversampling in improving the convergence rate of the collocation method for an optimal choice of collocation points, we turn our attention to the case of suboptimal choices of these points. For the present examples we consider the double layer formulation of the Helmholtz equation on a smooth domain, i.e.
\begin{align*}
    V=\frac{1}{2}\mathcal{I}+\mathcal{D}
\end{align*}
with order $2\alpha=0$. We now consider two examples for the interior Dirichtlet problem on the kite shape shown in Fig.~\ref{fig:intro_example_singlelayeroversamplingfieldpointkite}, which is parametrised by
\begin{align*}
    z:t\mapsto (-\sin(2\pi t)-\cos(4\pi t),\cos(2\pi t)).
\end{align*}
In both cases we plot the error in a field point for the interior field, which in this case is given by
\begin{align*}
    |\tilde{\phi}(x)-\phi_N^{(M)}(x)|=\left|\mathcal{D}\left(\tilde{u}(y)-u_N^{(M)}(y)\right)(x)\right|=\left|\int_\Gamma\frac{\partial G}{\partial n_y}(x,y)\left(\tilde{u}(y)-u_N^{(M)}(y)\right)ds_y\right|
\end{align*}
and which according to \eqref{eqn:estimate_field_point_error_in_terms_of_sobolev_norms} captures the optimal convergence properties of the method in any Sobolev norm. We solve the interior Dirichlet problem with the field point $x=(0.1,0.2)$, wavenumber $k=5$ and plane wave boundary conditions
\begin{align*}
    \phi\big|_\Gamma(x_1,x_2)=e^{i\cos\theta x_1+i\sin\theta x_2},
\end{align*}
with $\theta=0$, for which an exact solution of the interior problem is given precisely in terms of a plane wave $\phi(x)=\phi(x_1,x_2)=\exp\left(i\cos\theta x_1+i\sin\theta x_2\right)$.

\textbf{The first example} concerns linear splines ($d=1$) on an equispaced mesh, but we take collocation points that are slightly offset. In particular we take
\begin{align*}
    \Delta_M=\left\{0.5/N+m/M\big|m=1,\dots,M\right\},
\end{align*}
i.e. for $M=N$ the collocation points are the midpoints of the spline mesh and for higher rates of oversampling the collocation points are shifted by $0.5/N$. The results are shown in Fig.~\ref{fig:equispaced_shifted_grid_kite}.

\begin{figure}[h!]
	\centering
	\includegraphics[width=0.6\linewidth]{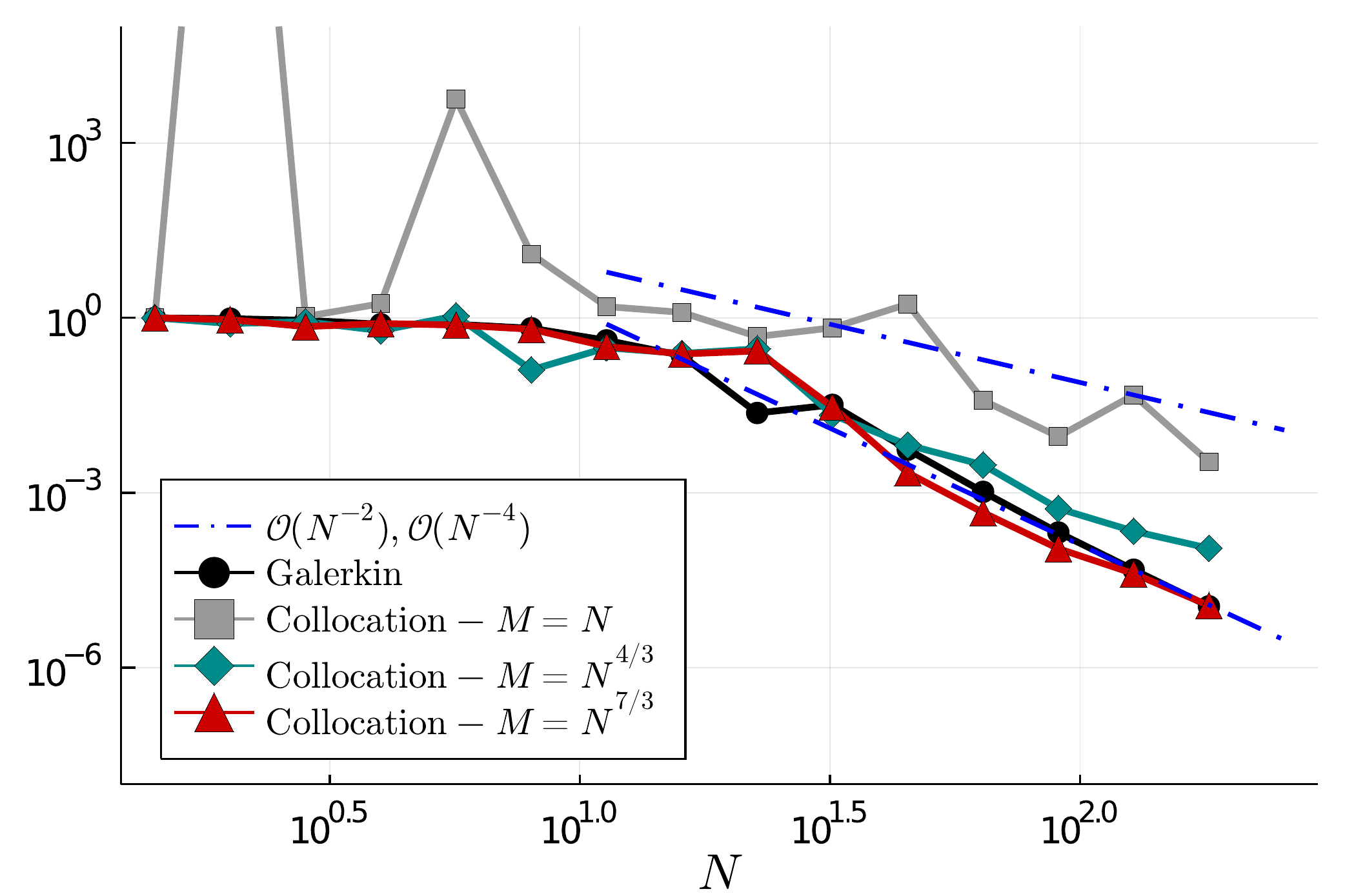}
	\caption{Error in an interior field point $\left|\mathcal{D}\left(\tilde{u}(y)-u_N^{(M)}(y)\right)(x)\right|$ for wave scattering on a smooth domain, with equispaced points that are offset from the equispaced spline mesh.}
	\label{fig:equispaced_shifted_grid_kite}
\end{figure}

We find that, as guaranteed by Corollary~\ref{cor:convergence_on_equispaced_grids_spline_basis}, there is convergence at rate $\mathcal{O}(N^{-2})$ for slightly more than linear oversampling $M=N^{\beta},$ with $\beta=1+\frac{1}{2d+1-4\alpha}=4/3$ and convergence at rate $\mathcal{O}(N^{-4})$ for slightly more than quadratic oversampling $M=N^{\beta}$, with $\beta=2+\frac{1}{2d+1-4\alpha}=7/3$. These two rates are indicated with the blue dashdotted lines in Fig.~\ref{fig:equispaced_shifted_grid_kite}.

We notice that the standard collocation method appears to converge too, albeit at a much more unsteady rate with significant problems for small $N$. This is not currently captured by our estimates and might mean that smaller amounts of oversampling can achieve the desired results, however we highlight that overall one may infer that a slight amount of oversampling can help enhance the robustness of the method to the choice of sampling points.

The positive effect of oversampling is even more noticable in our \textbf{second example} where we choose the collocation points in a highly suboptimal way -- we draw the points independently from a uniform random distribution:
\begin{align}\label{eqn:random_collocation_points}
    y_m\sim U[0,1),\quad m=1,\dots, M,
\end{align}
and $\{x_m\}_{m=1}^M=\{y_m\}_{m=1}^M$ with $0\leq x_1<x_2<\cdots<x_M<1$. In this example we take splines of degree $d=2$ (again on an equispaced mesh) to satisfy the assumptions of Corollary~\ref{cor:convergence_estimates_for_collocation_non-equispaced_points}, but other than that consider the same integral equation as in the previous example of Fig.~\ref{fig:equispaced_shifted_grid_kite}.
\begin{figure}[h!]
	\begin{subfigure}{0.49\textwidth}
		\centering
		\includegraphics[width=0.97\linewidth]{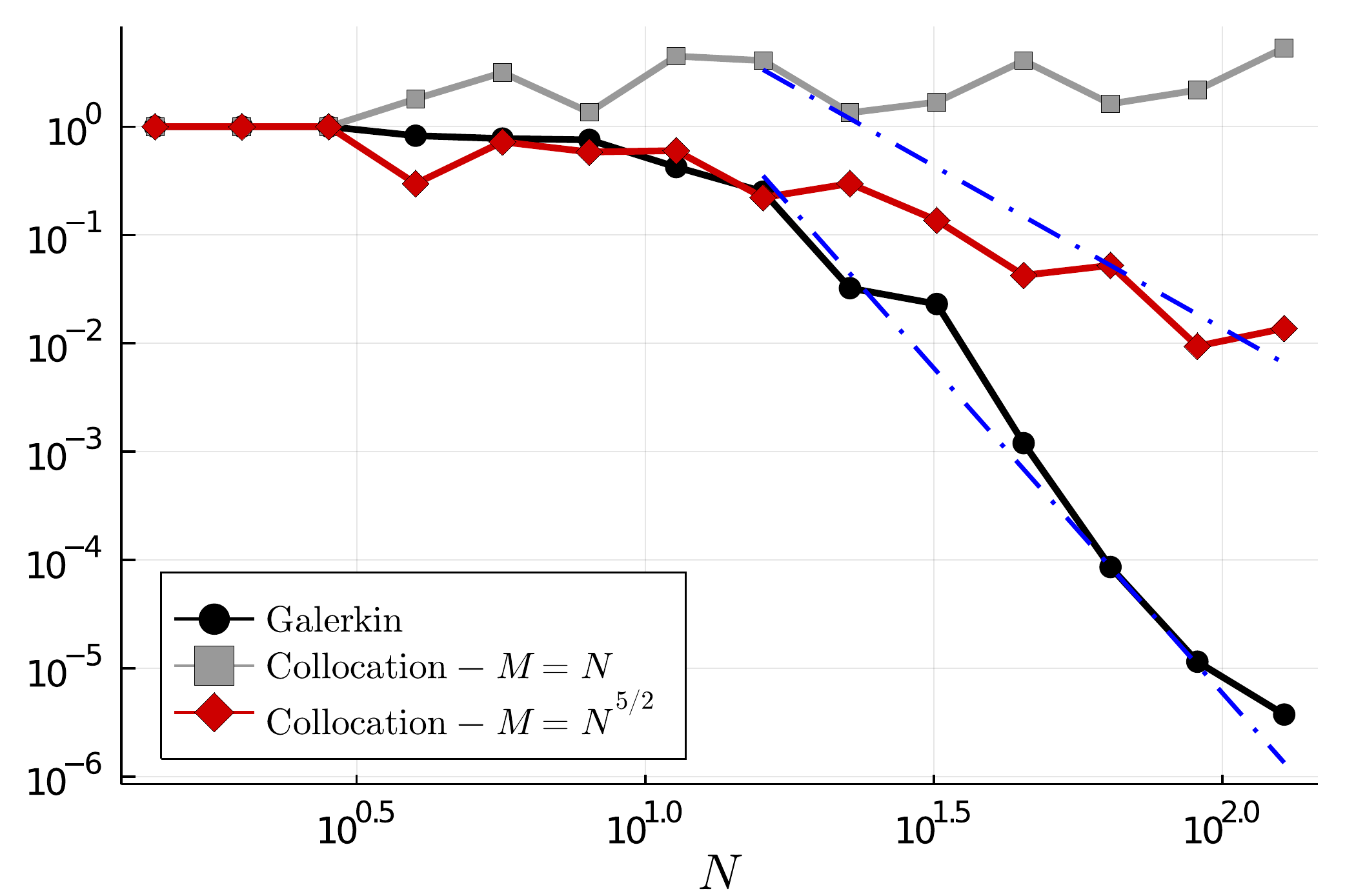}
		\caption{$\left|\mathcal{D}\left(\tilde{u}(y)-u_N^{(M)}(y)\right)(x)\right|$}
		\label{fig:random_sampling_kite-1}
	\end{subfigure}%
	\begin{subfigure}{0.49\textwidth}
		\centering
		\includegraphics[width=0.97\linewidth]{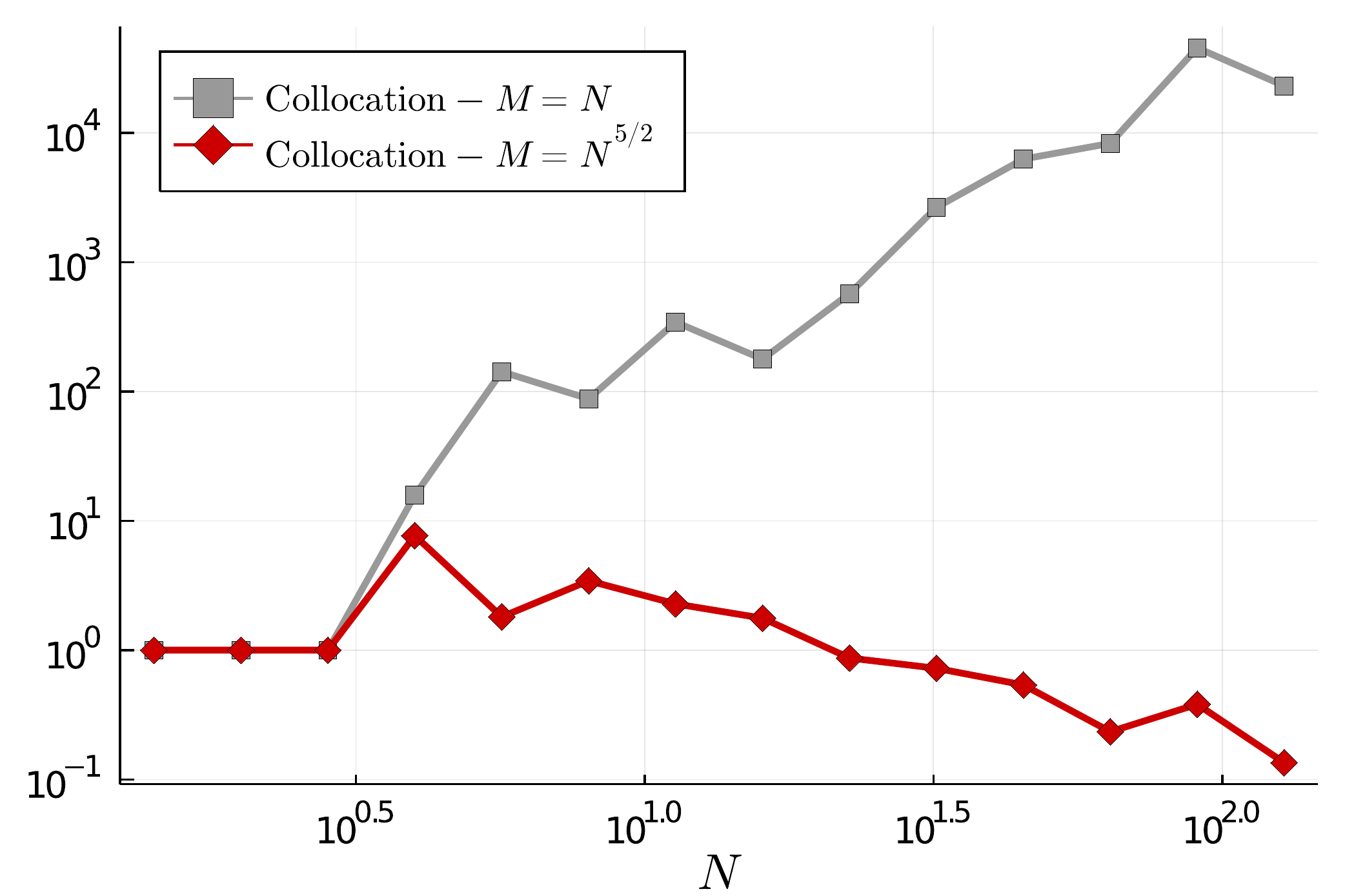}
		\caption{$\mathrm{d}(\Delta_M)^3MN^3$}
		\label{fig:random_sampling_kite-2}
	\end{subfigure}
	\caption{Double layer formulation of the interior Dirichlet problem for the Helmholtz equation. The sampling points are drawn uniformly at random $x_m\sim U[0,1)$.}
	\label{fig:random_sampling_kite}
\end{figure}

 As before in \S\ref{sec:non-equispaced_sampling_points} we let $\mathrm{d}(\Delta_M)=\max_{1\leq  m\leq M}|x_{m+1}-x_m|$. One can then show that (see e.g. Theorem~2.2 in \cite{holst1980}) the expected maximum distance in the collocation points is
\begin{align}\label{eqn:expected_value_max_distance}
    \mathbb{E}\left[\mathrm{d}(\Delta_M)\right]=\frac{1}{M-1}\sum_{m=1}^{M-1}\frac{1}{m}\leq \frac{\log (M-1)}{M-1}.
\end{align}
We recall from Corollary~\ref{cor:convergence_estimates_for_collocation_non-equispaced_points} that convergence in the energy space $H^{2\alpha}=H^{0}$ (since for the double layer potential $2\alpha=0$) is guaranteed if $\Delta_M=\Delta_{M(N)}$ is chosen such that for some $\epsilon>0$
\begin{align}\label{eqn:convergence_condition}
    \lim_{N\rightarrow 0}\mathrm{d}(\Delta_M)^3 MN^{3+\epsilon}=0.
\end{align}
Based on \eqref{eqn:expected_value_max_distance} we expect \eqref{eqn:convergence_condition} to be the case whenever $M\geq N^{\beta},\beta>4/3$. However due to the $\log$-term in \eqref{eqn:expected_value_max_distance} this convergence may only occur for rather large values of $N$. As such in the present example we increase the amount of oversampling (still in the form \eqref{eqn:random_collocation_points}) for our experiment to $M=N\lceil N^{1.5}\rceil\geq N^{2.5}$. The results are shown in Fig~\ref{fig:random_sampling_kite}. In Fig.~\ref{fig:random_sampling_kite-2} one can see the convergence quantity $\mathrm{d}(\Delta_M)^3MN^3$ which is seen to tend to zero for the oversampled collocation method, but which diverges for the case $M=N$
i.e. for the standard collocation method. In Fig.~\ref{fig:random_sampling_kite-1} we see the error in the field point $x=(0.1,0.2)$ for the Galerkin method, the collocation method with $M=N$ and for the oversampled collocation method with $M=N^{5/2}$. We notice that the standard collocation method does not converge. In contrast the oversampled collocation method exhibits convergence and the predicted asymptotic rate $\mathcal{O}(N^{2\alpha-(d+1)})=\mathcal{O}(N^{-3})$ starts to emerge for large $N$, although due to the random nature of the samples the convergence behaviour is slightly more inconsistent than in previous examples. In the plot this asymptotic rate is highlighted by the upper blue dashdotted line. Of course, as expected the Galerkin method converges at the rate $\mathcal{O}(N^{2\alpha-2(d+1)})=\mathcal{O}(N^{-6})$. This faster rate could be achieved with higher rates of oversampling in the collocation method, but this would become impractical very quickly as we increase $N$.

\subsection{Polygonal scatterers}
Our final numerical example concerns a polygonal domain. Specifically we consider the single layer formulation for the exterior scattering problem on a pentagonal domain as shown in figure \ref{fig:pentagon_geometry-field-point}.
We solve the integral equation using a spline basis of degree 1 based on the piecewise parametrisation $z:[0,1)\rightarrow\Gamma$ of the pentagon. We showed in Corollary~\ref{cor:convergence_estimates_Lipschitz_domains} that taking
\begin{align*}
    M=N^\beta,\quad \beta>3/2
\end{align*}
is sufficient to guarantee convergence at optimal rate in the energy space $H^{-1}$. For the present example we offset the collocation points by a quarter of the spline basis mesh, i.e. 
\begin{align*}
    \Delta_M=\left\{0.25/N+m/M\big|m=1,\dots,M\right\},
\end{align*}
to emphasize that the results are not dependent on an optimal choice of collocation points even for polygonal domains. The geometry is a regular polygon of sidelength $2\sin\frac{2\pi}{5}$ and we take $k=10$. In Fig.~\ref{fig:pentagon_shifted_grid_sobolev_error} the error for standard collocation and oversampled collocation methods are compared to the Galerkin method and the optimal rate of convergence provided by the $H^{-1}$-projection. We observe that all three methods follow the optimal rate very closely, albeit the oversampled collocation method does so with a smaller constant than the standard collocation. Of course, it seems even the standard collocation method converges which suggests that already smaller amounts of oversampling may be beneficial and guarantee convergence.

\begin{figure}[h!]
		\centering
\includegraphics[width=0.6\linewidth]{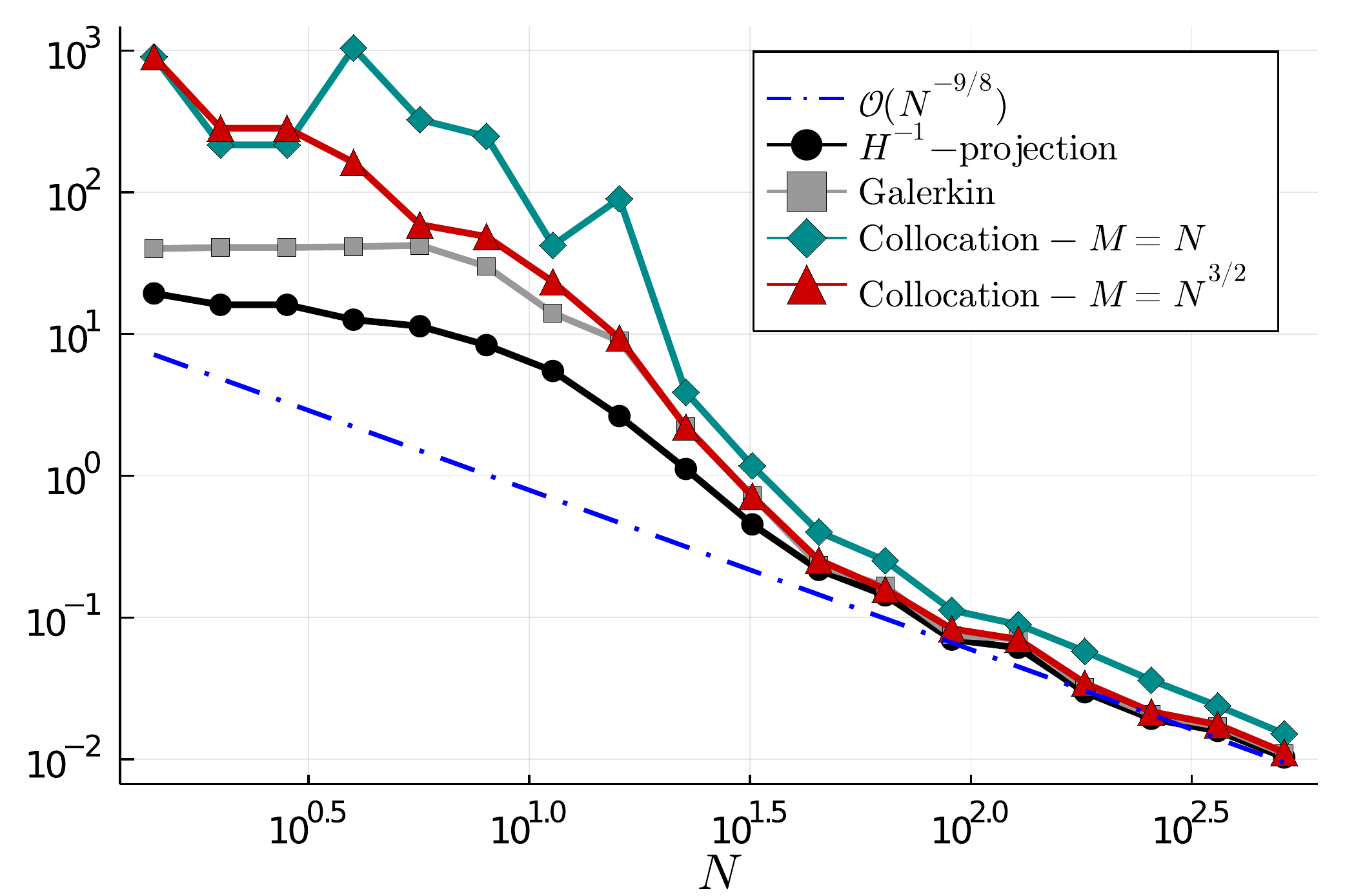}
\caption{Error in the numerical method $\|u-u_N^{(M)}\|_{H^{-1}}$ for a regular pentagonal scatterer, using the single layer potential (order $2\alpha = -1$) and linear splines ($d=1$), with offset equispaced collocation points.}
\label{fig:pentagon_shifted_grid_sobolev_error}
\end{figure}
Although there is no guarantee in the polygonal case as to whether the energy space provides the lowest Sobolev order for which optimal convergence holds we highlight in Fig.~\ref{fig:pentagon_shifted_grid_field-point_error} the convergence rates of the method in a field point, i.e. we plot
\begin{align*}
    |\mathcal{S}(\tilde{u}-u_N^{(M)})(x)|.
\end{align*}
The blue dashdotted line indicates the optimal convergence rate in $H^{-1}$, which is
\begin{align*}
    \mathcal{O}(N^{-1/2-\pi/\max_j\Omega_j})=\mathcal{O}(N^{-9/8})
\end{align*}
according to \eqref{eqn:regularity_of_solution_polygonal_scatterers}. It appears that the convergence rate in the field point is very close to this rate, and we also observe again that, even though standard and oversampled collocation methods appear to converge at similar rates, the latter does so with a smaller constant and improved stability for small values of $N$.

\begin{figure}[h!]
\begin{subfigure}{0.65\textwidth}
	\centering
	\includegraphics[width=0.99\linewidth]{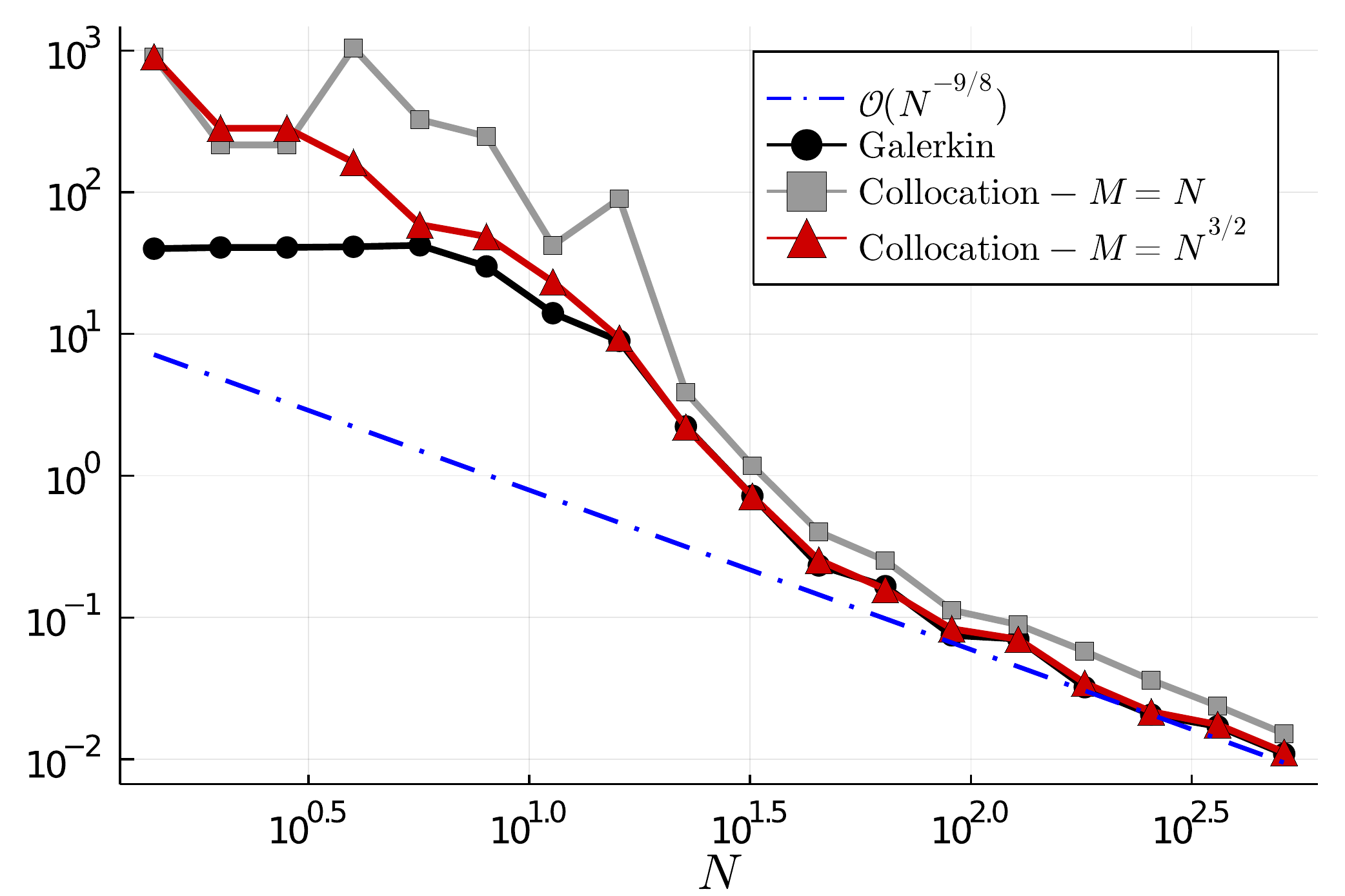}
	\caption{Convergence rates in the field point $|\mathcal{S}(\tilde{u}-u_N^{(M)})(x)|$.}
\label{fig:pentagon_shifted_grid_field-point_error}
	\end{subfigure}%
	\begin{subfigure}{0.35\textwidth}
	\centering
	\vspace{0.5cm}
	\includegraphics[width=0.99\linewidth]{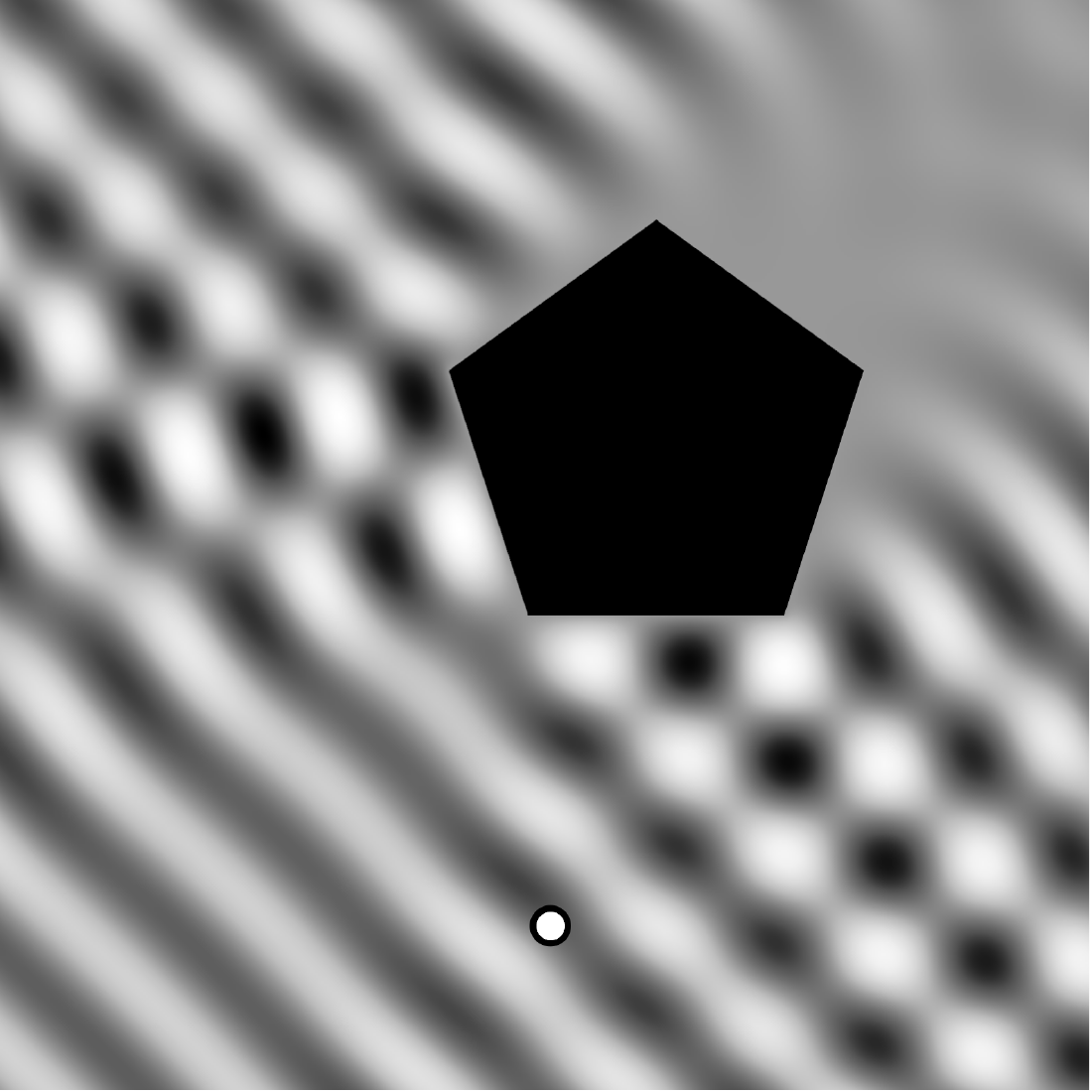}
	\vspace{0.225cm}
	\caption{The geometry and field point.}
	\label{fig:pentagon_geometry-field-point}
	\end{subfigure}
	\caption{Single layer formulation of the exterior Dirichlet problem for the Helmholtz equation on a pentagonal scatterer.}
\label{fig:pentagon_shifted_grid}
\end{figure}

\section{Conclusions and future research}\label{sec:conclusions}
In the present work, we considered \textit{oversampled collocation methods} for Fredholm integral equations, i.e. collocation methods for which the number of collocation points is greater than the dimension of the trial space. Specifically, in the method of consideration, the approximation is given as the least-squares solution to the overdetermined linear system arising from this oversampling process. Our goal was to understand whether this process can be used to enhance the approximation in the collocation setting, with a particular emphasis on avoiding having to choose the collocation points in a very specific, optimal way.

We sought to understand the properties of the oversampled collocation method both through rigorous analysis and numerical examples and our conclusions are twofold. Firstly, we showed that for sufficient amounts of (superlinear) oversampling the convergence rate of the collocation method can be improved using oversampling. Indeed in the limited number of cases where rigorous results for collocation methods do exist the convergence rate is found to be $\mathcal{O}(N^{-(d+1)+2\alpha})$ where $N$ and $d$ are the dimension and degree of the spline approximation space respectively and $2\alpha$ is the order of the integral operator. We show that due to the relationship of the oversampled collocation method with a Bubnov-Galerkin method, superlinear sampling can double the convergence rate to $\mathcal{O}(N^{-2(d+1)+4\alpha})$. Our main results in this direction are Theorems~\ref{thm:discrete_aubin_nitsche}\&\ref{thm:convergence_rates_equispaced_grids}. The former is formulated in a very general framework, based on a general selection of collocation points and regular boundary element spaces in the sense of Babu\v{s}ka \& Aziz \cite{babuska1992}, and provides a sufficient rate of oversampling to guarantee these improved convergence rates. Theorem~\ref{thm:convergence_rates_equispaced_grids} is specific to settings where the integral operator takes a certain pseudo-differential form which allows us to give an exact expression for the error and to show that in the corresponding setting the fastest convergence rate is achieved in the oversampled collocation method when $M\propto N^2$. Of course, oversampling at a quadratic rate may not be favourable in practice but the same results show that in some cases even linear oversampling at rate $M=JN$ can decrease the error of the method by a factor of $J^{-3}$, which seems certainly worthwhile.

Our second conclusion is that oversampling can help enhance the robustness of the method to a suboptimal choice of collocation points. The main result in this direction is Theorem~\ref{thm:convergence_in_energy_space} which provides a convergence guarantee for the oversampled collocation method for a very general choice of collocation points and trial spaces. Indeed this result quantifies a sufficient amount of oversampling that guarantees convergence of the method even for highly suboptimal choices of collocation points. This result is strongly visible in Fig.~\ref{fig:random_sampling_kite} where a suboptimal choice of collocation points leads the standard collocation method to diverge, whilst the oversampled version converges. Additionally, in Fig.~\ref{fig:equispaced_shifted_grid_kite} and Fig.~\ref{fig:pentagon_shifted_grid} we provided numerical evidence that especially for small values of $N$ (i.e. the initial range in the convergence plots) oversampling has a significant stabilising effect on the error of the collocation method.

Certainly, in the settings that were considered in this paper an optimal choice of collocation points is readily available, but we regard the present results as providing an initial analysis of the underlying mechanism with a view to more general settings such as integral equations on surfaces in 3D where an optimal choice may no longer be obvious or known. Indeed we highlight again that our results in Theorems~\ref{thm:convergence_in_energy_space}\&\ref{thm:discrete_aubin_nitsche} are formulated in a general way that allows for simple extension to higher dimensional domains. Future research on this topic will focus on exploring the properties of oversampling for collocation methods in more general settings. This includes in particular the extension of the present results to 3D boundary integral equations and also to understanding the effects of oversampling when there is redundancy in the trial spaces. Based on recent results in \cite{FNA2} we expect that oversampling together with suitable regularisation can act as a stabiliser towards redundancies in the basis spaces, which might provide a framework for rigorous analysis of collocation methods involving more general approximation spaces such as hybrid numerical-asymptotic basis functions as described by \cite{chandler-wilde_graham_langdon_spence_2012,gibbs2020}. A further direction for future research is the investigation of the merits of our current approach (relating a discrete sampling process to its continuous limit, and potentially the use of oversampling) to understand the convergence rates of fully discrete schemes such as Nyst\"{o}m methods \cite{bremer2012,bruno2013convergence,hao2014high}.

\section*{Acknowledgements}
The authors would like to thank Arieh Iserles (University of Cambridge) and Nigel Peake (University of Cambridge) for several stimulating discussions and feedback on the work. We would also like to express our gratitude to Evert Provoost (KU Leuven) for contributions to the code used in \S\ref{sec:numerical_results} and for performing initial numerical studies that helped inform this research.

\section*{Conflict of interest}
The authors declare that they have no conflict of interest.

\bibliographystyle{siam}      
\bibliography{bibliography}

\begin{thebibliography}{10}

\bibitem{adcock2012357}
{\sc B.~Adcock and A.~C. Hansen}, {\em {Stable reconstructions in Hilbert
  spaces and the resolution of the Gibbs phenomenon}}, Applied and
  Computational Harmonic Analysis, 32 (2012), pp.~357--388.

\bibitem{FNA1}
{\sc B.~Adcock and D.~Huybrechs}, {\em {Frames and Numerical Approximation}},
  SIAM Review, 61 (2019), pp.~443--473.

\bibitem{FNA2}
\leavevmode\vrule height 2pt depth -1.6pt width 23pt, {\em {Frames and
  Numerical Approximation II: Generalized Sampling}}, Journal of Fourier
  Analysis and Applications, 26 (2020), p.~87.

\bibitem{adcock2014}
{\sc B.~Adcock, D.~Huybrechs, and J.~Mart{\'\i}n-Vaquero}, {\em {On the
  Numerical Stability of Fourier Extensions}}, Foundations of Computational
  Mathematics, 14 (2014), pp.~635--687.

\bibitem{arens2016}
{\sc T.~Arens and T.~R{\"o}sch}, {\em A collocation method for weakly singular
  integral equations with super-algebraic convergence rate}, Numerische
  Mathematik, 134 (2016), pp.~441--472.

\bibitem{arnold1983asymptotic}
{\sc D.~N. Arnold and W.~L. Wendland}, {\em On the asymptotic convergence of
  collocation methods}, Mathematics of Computation, 41 (1983), pp.~349--381.

\bibitem{Arnold1985}
{\sc D.~N. Arnold and W.~L. Wendland}, {\em The convergence of spline
  collocation for strongly elliptic equations on curves}, Numerische
  Mathematik, 47 (1985), pp.~317--341.

\bibitem{babuska1992}
{\sc I.~Babu\v{s}ka and A.~K. Aziz}, {\em {Part I: Survey Lectures on the
  Mathematical Foundations of the Finite Element Method}}, in The Mathematical
  Foundations of the Finite Element Method with Applications to Partial
  Differential Equations, A.~K. Aziz, ed., Academic Press, 1972.

\bibitem{barnett2008stability}
{\sc A.~H. Barnett and T.~Betcke}, {\em Stability and convergence of the
  {M}ethod of {F}undamental {S}olutions for {H}elmholtz problems on analytic
  domains}, J. Comput. Phys., 227 (2018), pp.~7003--7026.

\bibitem{bremer2012}
{\sc J.~Bremer and Z.~Gimbutas}, {\em {A Nystr{\"o}m method for weakly singular
  integral operators on surfaces}}, Journal of Computational Physics, 231
  (2012), pp.~4885--4903.

\bibitem{bruno2013convergence}
{\sc O.~P. Bruno, V.~Dom{\'\i}nguez, and F.-J. Sayas}, {\em {Convergence
  analysis of a high-order Nystr{\"o}m integral-equation method for surface
  scattering problems}}, Numerische Mathematik, 124 (2013), pp.~603--645.

\bibitem{chandler1990}
{\sc G.~A. Chandler and I.~H. Sloan}, {\em Spline qualocation methods for
  boundary integral equations}, Numerische Mathematik, 58 (1990), pp.~537--567.

\bibitem{chandler-wilde_graham_langdon_spence_2012}
{\sc S.~N. Chandler-Wilde, I.~G. Graham, S.~Langdon, and E.~A. Spence}, {\em
  Numerical-asymptotic boundary integral methods in high-frequency acoustic
  scattering}, Acta Numerica, 21 (2012), pp.~89--305.

\bibitem{chandler2007galerkin}
{\sc S.~N. Chandler-Wilde and S.~Langdon}, {\em A {G}alerkin boundary element
  method for high frequency scattering by convex polygons}, SIAM Journal on
  Numerical Analysis, 45 (2007), pp.~610--640.

\bibitem{ciarlet2002finite}
{\sc P.~Ciarlet}, {\em {The Finite Element Method for Elliptic Problems}},
  Classics in Applied Mathematics, Society for Industrial and Applied
  Mathematics, 2002.

\bibitem{colton2013integral}
{\sc D.~Colton and R.~Kress}, {\em {Integral Equation Methods in Scattering
  Theory}}, Classics in Applied Mathematics, Society for Industrial and Applied
  Mathematics, 2013.

\bibitem{deano2017computing}
{\sc A.~Dea{\~n}o, D.~Huybrechs, and A.~Iserles}, {\em Computing Highly
  Oscillatory Integrals}, Other Titles in Applied Mathematics, Society for
  Industrial and Applied Mathematics, 2017.

\bibitem{gibbs2020}
{\sc A.~Gibbs, D.~P. Hewett, D.~Huybrechs, and E.~Parolin}, {\em Fast hybrid
  numerical-asymptotic boundary element methods for high frequency screen and
  aperture problems based on least-squares collocation}, SN Partial
  Differential Equations and Applications, 1 (2020), p.~21.

\bibitem{hackbusch2012integral}
{\sc W.~Hackbusch}, {\em {Integral Equations: Theory and Numerical Treatment}},
  International Series of Numerical Mathematics, Birkh{\"a}user Basel, 2012.

\bibitem{hao2014high}
{\sc S.~Hao, A.~H. Barnett, P.-G. Martinsson, and P.~Young}, {\em {High-order
  accurate methods for Nystr{\"o}m discretization of integral equations on
  smooth curves in the plane}}, Advances in Computational Mathematics, 40
  (2014), pp.~245--272.

\bibitem{holst1980}
{\sc L.~Holst}, {\em {On the Lengths of the Pieces of a Stick Broken at
  Random}}, Journal of Applied Probability, 17 (1980), pp.~623--634.

\bibitem{Hsiao2017}
{\sc G.~C. Hsiao, O.~Steinbach, and W.~L. Wendland}, {\em {Boundary Element
  Methods: Foundation and Error Analysis}}, in Encyclopedia of Computational
  Mechanics Second Edition, Volume 2, E.~Stein, R.~de~Borst, and T.~J.~R.
  Hughes, eds., Wiley, 2017, pp.~841--902.

\bibitem{hsiao1981}
{\sc G.~C. Hsiao and W.~L. Wendland}, {\em {The Aubin–Nitsche Lemma for
  Integral Equations}}, Journal of Integral Equations, 3 (1981), pp.~299--315.

\bibitem{HUYBRECHS201992}
{\sc D.~Huybrechs and A.-E. Olteanu}, {\em {An oversampled collocation approach
  of the Wave Based Method for Helmholtz problems}}, Wave Motion, 87 (2019),
  pp.~92--105.

\bibitem{maierhofer2020extended}
{\sc G.~Maierhofer, A.~Iserles, and N.~Peake}, {\em {An extended
  Filon--Clenshaw--Curtis method for high-frequency wave scattering problems in
  two dimensions}}, preprint, \url{https://arxiv.org/abs/2006.05551},  (2020).

\bibitem{morse1953methods}
{\sc P.~Morse and H.~Feshbach}, {\em Methods of Theoretical Physics. Part I},
  International series in pure and applied physics, McGraw-Hill, 1953.

\bibitem{saranen1985asymptotic}
{\sc J.~Saranen and W.~L. Wendland}, {\em On the asymptotic convergence of
  collocation methods with spline functions of even degree}, Mathematics of
  Computation, 45 (1985), pp.~91--108.

\bibitem{sloan1988quadrature}
{\sc I.~H. Sloan}, {\em A quadrature-based approach to improving the
  collocation method}, Numerische Mathematik, 54 (1988), pp.~41--56.

\bibitem{sloan_1992}
{\sc I.~H. Sloan}, {\em Error analysis of boundary integral methods}, Acta
  Numerica, 1 (1992), pp.~287--339.

\bibitem{sloan1989quadrature}
{\sc I.~H. Sloan and W.~L. Wendland}, {\em A quadrature-based approach to
  improving the collocation method for splines of even degree}, Zeitschrift
  f{\"u}r Analysis und ihre Anwendungen, 8 (1989), pp.~361--376.

\bibitem{torres1993}
{\sc R.~H. Torres and G.~V. Welland}, {\em {The Helmholtz Equation and
  Transmission Problems with Lipschitz Interfaces}}, Indiana University
  Mathematics Journal, 42 (1993), pp.~1457--1485.

\bibitem{weisstein2021}
{\sc E.~W. Weisstein}, {\em {Helmholtz Differential Equation--Polar
  Coordinates. {From MathWorld---A Wolfram Web Resource}}}.
\newblock Last visited on 24/03/2021.

\end{thebibliography}

\begin{appendix}
\section{Error estimate in the discrete inner product}\label{app:proof_error_bound_trapezoidal_rule}
Here we describe the proof of Lemma~\ref{lem:bound_equispaced_trapezoidal_rule_L^2_product}. Since $\tilde{x}$ introduces a simple phase shift in all Fourier modes we may, without loss of generality, assume $\tilde{x}=0$. Let $\mathcal{Q}_M[f]=1/M\sum_{m=0}^{M-1}f(m/M)$ be the trapezoidal rule, then we have the following well-known result:
	\begin{lemma} If $f\in L^2([0,1))$ then
		\begin{align*}
			\int_0^1 f(x)dx-\mathcal{Q}_M[f]=\sum_{j\neq 0} \hat{f}_{jM}.
		\end{align*}
	\end{lemma}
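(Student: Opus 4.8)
The plan is to substitute the Fourier expansion $f(x)=\sum_{m\in\mathbb{Z}}\hat f_m e^{2\pi i m x}$ directly into the quadrature functional and then exploit the discrete orthogonality of the roots of unity. Writing $\mathcal{Q}_M[f]=\frac1M\sum_{k=0}^{M-1}f(k/M)$ and inserting the series produces a double sum; after interchanging the (finite) quadrature sum with the (infinite) Fourier sum one obtains $\mathcal{Q}_M[f]=\sum_{m\in\mathbb{Z}}\hat f_m\,\frac1M\sum_{k=0}^{M-1}e^{2\pi i m k/M}$. The inner factor is the arithmetic mean of the $m$-th powers of the $M$-th roots of unity, which a one-line geometric-series computation (using $e^{2\pi i m}=1$ for integer $m$) evaluates to $1$ when $M\mid m$ and to $0$ otherwise. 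Hence only the frequencies that are integer multiples of $M$ survive, giving $\mathcal{Q}_M[f]=\sum_{j\in\mathbb{Z}}\hat f_{jM}$, and subtracting $\int_0^1 f=\hat f_0$ isolates the aliased high frequencies $\sum_{j\neq0}\hat f_{jM}$, which is the claimed identity.

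The step I expect to need the most care is the justification of the interchange of summation together with the use of the point values $f(k/M)$. For $f$ merely in $L^2$ the pointwise evaluation is not even well defined, so I would read (and apply) the identity under the standing assumption that the Fourier coefficients are absolutely summable, $\sum_m|\hat f_m|<\infty$. This is precisely the regime of interest: in Lemma~\ref{lem:bound_equispaced_trapezoidal_rule_L^2_product} one has $f\in H^r$ with $r>1/2$, and then $\sum_{m}|\hat f_m|\le|\hat f_0|+\big(\sum_{m\neq0}|m|^{-2r}\big)^{1/2}\|f\|_{H^r}<\infty$ by Cauchy--Schwarz, the prefactor being finite exactly because $2r>1$. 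Under absolute summability the series converges uniformly to a continuous representative of $f$, so the point values are unambiguous and Fubini's theorem for the finite--infinite double sum legitimises the rearrangement; no deeper analytic input than this is required.

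One small bookkeeping point is worth flagging: carrying the convention $\hat f_m=\int_0^1 e^{-2\pi i m t}f(t)\,dt$ through the computation yields $\int_0^1 f-\mathcal{Q}_M[f]=-\sum_{j\neq0}\hat f_{jM}$ (as one checks instantly on the monomial $f=e^{2\pi i M x}$). The overall sign is immaterial in every subsequent use, where the identity enters only through the absolute value $|\int_0^1 f-\mathcal{Q}_M[f]|=\big|\sum_{j\neq0}\hat f_{jM}\big|$, and it is this magnitude that is then controlled by Cauchy--Schwarz against the weighted Sobolev norms to produce the $M^{-r}$ estimate of Lemma~\ref{lem:bound_equispaced_trapezoidal_rule_L^2_product} in the remainder of Appendix~\ref{app:proof_error_bound_trapezoidal_rule}.
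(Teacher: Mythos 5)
Your proof is essentially identical to the paper's one-line argument: insert the Fourier series into the quadrature sum, exchange the finite and infinite sums, and use discrete orthogonality of the $M$-th roots of unity, so that only the aliased frequencies $jM$ survive. Your two caveats are both well taken — the stated hypothesis $f\in L^2$ is indeed too weak for pointwise evaluation and the lemma is only ever invoked for $f\in H^r$ with $r>1/2$, where absolute summability of $(\hat f_m)$ legitimises the point values and the rearrangement; and the identity as displayed does carry a sign (the paper's final equality also silently drops the $j=0$ term), which is immaterial downstream since only $\left|\int_0^1 f(x)\,dx-\mathcal{Q}_M[f]\right|$ enters the subsequent estimates.
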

	\begin{proof}
		\begin{align*}
			\mathcal{Q}_M[f]&=\frac{1}{M}\sum_{m=0}^{M-1}\sum_{k\in\mathbb{Z}}e^{2\pi i mk/M}\hat{f}_k=\frac{1}{M}\sum_{k\in\mathbb{Z}}\hat{f}_k\sum_{m=0}^{M-1}e^{2\pi i mk/M}=\sum_{j\neq 0}\hat{f}_{jM}.
		\end{align*}
	\end{proof}Thus we have immediately:
	\begin{corollary}\label{cor:quad_error_in_sobolev_norm}
	For all $t>1/2$, there is a constant $C_t>0$ (independent of $M$), such that
	\begin{align*}
		\left|\int_0^1 f(x)dx-\mathcal{Q}_M[f]\right|\leq C_t M^{-t}\|f\|_{H^t}.
	\end{align*}
	\end{corollary}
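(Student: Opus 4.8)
The plan is to invoke the preceding lemma, which expresses the quadrature error exactly as a tail sum of Fourier coefficients, and then to extract the factor $M^{-t}$ by a single application of the Cauchy--Schwarz inequality. Since the lemma gives
\begin{align*}
    \int_0^1 f(x)\,dx - \mathcal{Q}_M[f] = \sum_{j\neq 0}\hat{f}_{jM},
\end{align*}
it suffices to bound this series by $C_t M^{-t}\|f\|_{H^t}$ with a constant $C_t$ independent of $M$.

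The key structural observation is that every summand involves a frequency of the form $jM$ with $|jM| = |j|M \geq M$, so that a weight $|jM|^{-t}$ can be split off. I would write each term as $\hat{f}_{jM} = |jM|^{-t}\,|jM|^{t}\hat{f}_{jM}$ and apply Cauchy--Schwarz to obtain
\begin{align*}
    \left|\sum_{j\neq 0}\hat{f}_{jM}\right| \leq \left(\sum_{j\neq 0}|jM|^{-2t}\right)^{1/2}\left(\sum_{j\neq 0}|jM|^{2t}|\hat{f}_{jM}|^2\right)^{1/2}.
\end{align*}
In the first factor I would pull out $M^{-2t}$, leaving $M^{-2t}\sum_{j\neq 0}|j|^{-2t} = 2\zeta(2t)\,M^{-2t}$; this is exactly where the hypothesis $t>1/2$ is used, since $\sum_{j\neq 0}|j|^{-2t}$ converges precisely when $2t>1$. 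For the second factor I would note that $\{jM : j\neq 0\}$ is a subset of the nonzero integers, so by the definition \eqref{eqn:definition_sobolev_norms_in_terms_of_fourier_modes} of the Sobolev norm we have $\sum_{j\neq 0}|jM|^{2t}|\hat{f}_{jM}|^2 \leq \|f\|_{H^t}^2$. Multiplying the two factors yields the estimate with $C_t = \sqrt{2\zeta(2t)}$.

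There is no serious obstacle here, which is why the statement is phrased as an immediate corollary. The only point requiring genuine attention is selecting the splitting that simultaneously isolates the correct power $M^{-t}$ and leaves behind a \emph{convergent} tail, together with the observation that convergence of that tail is equivalent to the sharp threshold $t>1/2$ appearing in the hypothesis.
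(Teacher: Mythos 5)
Your proof is correct and is precisely the argument the paper intends: the paper states the corollary as "immediate" from the lemma, and the Cauchy--Schwarz splitting $\hat{f}_{jM}=|jM|^{-t}\cdot|jM|^{t}\hat{f}_{jM}$ with the convergent tail $2\zeta(2t)M^{-2t}$ (requiring $t>1/2$) and the bound $\sum_{j\neq 0}|jM|^{2t}|\hat{f}_{jM}|^2\leq\|f\|_{H^t}^2$ is exactly the standard way to fill in that step.
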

	In particular from Corollary~\ref{cor:quad_error_in_sobolev_norm} we have that for any $t>1/2$:
	\begin{align*}
		\left|\langle f,g\rangle-\langle f,g\rangle_M\right|\leq C_t M^{-t}\|f\bar{g}\|_{H^{t}}.
	\end{align*}
\begin{proof}[Proof of Lemma~\ref{lem:bound_equispaced_trapezoidal_rule_L^2_product}]
	We observe (using the notation $\lesssim$ when there is a constant independent of $u,w$ implicit in the inequality):
	\begin{align}\nonumber
		\|uw\|_{H^{r}}^2&= \left|\sum_{m\in \mathbb{Z} }\hat{u}_{-m}\hat{w}_m\right|^2+\sum_{n\in \mathbb{Z} }|n|^{2r}\left|\sum_{m\in \mathbb{Z} }\hat{u}_{n-m}\hat{w}_m\right|^2\\\nonumber
		&\lesssim \left(\sum_{m\in \mathbb{Z} }|\hat{u}_{-m}|^2\right)\left(\sum_{m\in \mathbb{Z} }|\hat{w}_{m}|^2\right)+ \sum_{n\in \mathbb{Z} }\left|\sum_{m\in \mathbb{Z} }|n-m|^{r}\hat{u}_{n-m}\hat{w}_m+\sum_{m\in \mathbb{Z} }|m|^{r}\hat{u}_{n-m}\hat{w}_m\right|^2\\\label{eqn:split_into_two_terms}
		&\lesssim \|u\|_0^2\|w\|_{0}^2+ \sum_{n\in \mathbb{Z} }\left|\sum_{m\in \mathbb{Z} }|n-m|^{r}\hat{u}_{n-m}\hat{w}_m\right|^2+\left|\sum_{m\in \mathbb{Z} }|m|^{r}\hat{u}_{n-m}\hat{w}_m\right|^2
	\end{align}
where in \eqref{eqn:split_into_two_terms} we used that for $t\geq 0$ there is a constant $C=C_t>0$ such that
\begin{align*}
	(|m|+|n|)^t\leq C(|m|^t+|n|^t),\quad \forall m,n\in\mathbb{Z}.
\end{align*}
By the discrete Minkowski integral inequality
\begin{align}\label{eqn:proof_of_banach_algebra_sobolev_spaces_step2}
\sum_{n\in \mathbb{Z} }\left|\sum_{m\in \mathbb{Z} }|n-m|^{r}\hat{u}_{n-m}\hat{w}_m\right|^2&\leq \left(\sum_{m\in \mathbb{Z} }\left(\sum_{n\in \mathbb{Z} }|n-m|^{2r}|\hat{u}_{n-m}|^2|\hat{w}_m|^2\right)^{\frac{1}{2}}\right)^2\\\nonumber
&\leq \|u\|_{r}^2\left(\sum_{m\in \mathbb{Z} }|\hat{w}_m|\right)^2\lesssim_s \|u\|_r^2\|w\|_s^2,\quad \text{\ any\ }s>1/2.
\end{align}
The final term in \eqref{eqn:split_into_two_terms} can be bounded similarly and this concludes the proof of Lemma~\ref{lem:bound_equispaced_trapezoidal_rule_L^2_product}.
\end{proof}
\section{Error estimate for the discrete inner product non-uniform collocation points}\label{app:error_trapezoidal_rule_non-uniform_grid}
Here we provide a proof of Lemma~\ref{lem:error_estimate_trapezoidal_rule_general_sampling}.
\begin{proof}[Proof of Lemma~\ref{lem:error_estimate_trapezoidal_rule_general_sampling}]
    For $f\in C^2$ we have the well-known estimate
    \begin{align}
        \left|\int_{a}^bf(x)-\frac{b-a}{2}(f(b)+f(a))\right|\leq \frac{(b-a)^3}{12}f''(\xi), \quad \text{some\ } \xi\in(a,b).
    \end{align}
    This means in particular for any choice of quadrature points $0=x_1<x_2<\dots<x_M<1$ that the trapezoidal rule with weights,
    \begin{align*}
    \int_{0}^1f(x)dx\approx \sum_{m=1}^M\frac{|x_{j+1}-x_{j-1}|}{2}f(x_j),
\end{align*}
    has an error of the form
    \begin{align*}
        \left|\int_{0}^1f(x)dx- \sum_{m=1}^M\frac{|x_{j+1}-x_{j-1}|}{2}f(x_j)\right|\leq \frac{M}{12}\sup_{x\in[0,1)}|f''(x)|\max_{1\leq j\leq N}|x_{j+1}-x_j|^3
    \end{align*}
    We can translate this to Sobolev spaces using Morrey's inequality: For any $l\in\mathbb{N}$
    \begin{align*}
       \left| f^{(l)}(x)\right|&=\left|\sum_{m\in\mathbb{Z}}(2\pi im)^le^{2\pi i m}\hat{f}_m \right|\leq \sum_{m\in\mathbb{Z}}|2\pi m|^l |\hat{f}_m|\\
       &\leq (2\pi)^l\left(\sum_{m\in\mathbb{Z}}|m|^{-2s} \right)^{1/2}\left(\sum_{m\in\mathbb{Z}}|m|^{2s+2l}|\hat{f}_m|^2\right)^{1/2}\lesssim_s \|f\|_{H^{l+s}}\quad \text{any\ }s>1/2,
    \end{align*}
    where $\lesssim_s$ indicates an implicit constant independent of $f$ but dependent on $s$. Thus we have for any $f\in H^r,r>5/2$,
    \begin{align*}
        \mathcal{E}\lesssim_r M\max_{1\leq j\leq N}|x_{j+1}-x_j|^3 \|f\|_{H^r}.
    \end{align*}
   Following through the same steps \eqref{eqn:split_into_two_terms} \& \eqref{eqn:proof_of_banach_algebra_sobolev_spaces_step2} as in Appendix~\ref{app:proof_error_bound_trapezoidal_rule} we find
    \begin{align*}
        |\langle f,g\rangle-\langle f,g\rangle_M|\lesssim_{r,s} M\max_{1\leq j\leq N}|x_{j+1}-x_j|^3\left(\|f\|_{H^r}\|g\|_{H^s}+\|f\|_{H^s}\|g\|_{H^r}\right),
    \end{align*}
    for any $r>5/2,s>1/2$.
\end{proof}
\section{Derivation of exact error expression for equispaced grids}\label{app:derivation_exact_error_expression}
Here we provide the derivation of \eqref{eqn:explicit_solution_linear_system} \& \eqref{eqn:exact_expression_error_equispaced_grids}. The arguments are analogous to the discussion in \cite[Section 2]{chandler1990} with very minor modificiations to adapt to our notation and the Bubnov-Galerkin setting. Given the pseudo-differential form \eqref{eqn:pseudo_differential_form_V} of $V$, its action on the basis $\psi_\mu$ is quickly determined to be
\begin{align*}
    V\psi_\mu&=\sum_{m\equiv\mu(N)}[m]^{2\alpha}\left(\frac{\mu}{m}\right)^{d+1}e^{2\pi i mx}=[m]^{2\alpha} \sum_{m\equiv \mu(N)}\left[\frac{m}{\mu}\right]^{2\alpha}\left(\frac{\mu}{m}\right)^{d+1}e^{2\pi i mx}\\
    &=[\mu]^{2\alpha} e^{2\pi i \mu x}\left(1+\Omega\left(Nx,\frac{\mu}{N}\right)\right),
\end{align*}
where
\begin{align*}
        \Omega(\xi,y)=|y|^{{d+1}-2\alpha}\sum_{l\neq 0}\frac{1}{|l+y|^{{d+1}-2\alpha}}e^{2\pi il\xi}.
\end{align*}
Thus we can write the discrete inner product \eqref{eqn:discrete_inner_product_equispaced_points} coming from our collocation points as follows
\begin{align*}
    \left\langle V\psi_{\mu}, V\psi_\nu\right\rangle_M=\begin{cases}0,&\text{if\ } \mu\neq \nu\\
    1,&\text{if\ }\mu=\nu=0\\
    [\mu]^{4\alpha}\frac{1}{J}\sum_{j=1}^J\left|1+\Omega\left(\xi_j,\frac{\mu}{N}\right)\right|^2,&\text{if\ }\mu=\nu\neq 0.
    \end{cases}
\end{align*}
Similarly, we can compute
\begin{align*}
\left\langle V\psi_\mu,\exp\left(2\pi in\, \cdot\,\right)\right\rangle_M=\begin{cases}0,&\text{if\ }\mu\not\equiv \nu(N),\\
\frac{1}{J}\sum_{j=1}^J \exp(2\pi i l\xi_j),&\text{if\ }n=lN,\mu=0,\\
\frac{1}{J}\sum_{j=1}^J\exp(2\pi i l\xi_j)[\mu]^{2\alpha}\left(1+\overline{\Omega\left(\xi_j,\frac{\mu}{N}\right)}\right),&\text{if\ }n=\mu+lN,\mu\neq 0.
\end{cases}
\end{align*}
Thus we have for a general $u$:
\begin{align*}
     \left\langle V\psi_{\mu},Vu\right\rangle_M&=\sum_{m\in\mathbb{Z}}[m]^{2\alpha}\hat{u}_m\left\langle\exp\left(2\pi im\, \cdot\,\right),A\psi_\mu\right\rangle_M\\
     &=\begin{cases}
     \frac{1}{J}\sum_{j=1}^J\sum_{n\equiv 0(N)}[n]^{2\alpha}\hat{u}_n\exp\left(2\pi i \frac{n}{N} \xi_j\right),&\text{if\ }\mu=0,\\
     [\mu]^{2\alpha}\frac{1}{J}\sum_{j=1}^J\sum_{n\equiv \mu(N)}\exp\left(2\pi i \frac{n-\mu}{N} \xi_j\right)[n]^{2\alpha}\hat{u}_n\left(1+\overline{\Omega\left(\xi_j,\frac{\mu}{N}\right)}\right),&\text{if\ }\mu\neq0.
     \end{cases}
\end{align*}
Hence the linear system \eqref{eqn:discrete_orthogonality_special_basis} for the coefficients $a_\mu$ of $u_N^{(M)}$ in the basis $\psi_\mu$ ($u^{(M)}_N=\sum_{\nu\in\Lambda_N}\hat{a}_\nu \psi_{\nu}$) is diagonal and we find
\begin{align}\label{eqn:explicit_solution_linear_system_appendix}
    a_\mu=\begin{cases}\frac{1}{J}\sum_{j=1}^J\sum_{n\equiv 0(N)}[n]^{2\alpha}\hat{u}_n\exp\left(\frac{n}{N}\xi_j\right),&\text{if\ }\mu=0\\
    D\left(\frac{\mu}{N}\right)^{-1}\frac{1}{J}\sum_{j=1}^J\sum_{n\equiv\mu(N)}\left[\frac{n}{\mu}\right]^{2\alpha}\exp\left(2\pi i \frac{n-\mu}{N}\xi_j\right) \hat{u}_n\left(1+\overline{\Omega\left(\xi_j,\frac{\mu}{N}\right)}\right),&\text{if\ }\mu\neq 0,
    \end{cases}
\end{align}
where
\begin{align*}
    D(y)=\frac{1}{J}\sum_{j=1}^J\left|1+\Omega\left(\xi_j,y\right)\right|^2.
\end{align*}
As in \S\ref{sec:exact_expression_error_equispaced_splines} we let the true solution to \eqref{eqn:original_integral_equation} be $\tilde{u}(x)=\sum_{m\in\mathbb{Z}}\hat{u}_m\exp(2\pi im x)$. Thus, simplifying \eqref{eqn:explicit_solution_linear_system_appendix} we find after a few steps of algebra the required expressions \eqref{eqn:exact_expression_error_equispaced_grids}:
\begin{align*}
  a_\mu-\hat{u}_\mu=\begin{cases}P_N,&\text{if\ }\mu=0\\
  -\frac{E(\mu/N)}{D(\mu/N)}\hat{u}_\mu+R_N(\mu),&\text{if\ }\mu\neq 0,
  \end{cases}
\end{align*}
where:
\begin{align*}
    P_N&=\frac{1}{J}\sum_{j=1}^J\sum_{\substack{n\equiv 0(N)\\n\neq 0}}[n]^{2\alpha}\hat{u}_n\exp\left(2\pi i n\xi_j/N\right)\\
    E(y)&=\frac{1}{J}\sum_{j=1}^J\Omega\left(\xi_j,y\right)\left(1+\overline{\Omega\left(\xi_j,y\right)}\right)\\
    R_N(\mu)&=D\left(\frac{\mu}{N}\right)^{-1}\frac{1}{J}\sum_{j=1}^J\sum_{\substack{n\equiv\mu(N)\\n\neq \mu}}\left[\frac{n}{\mu}\right]^{2\alpha}\exp\left(2\pi i\frac{n-\mu}{N}\xi_j\right) \hat{u}_n\left(1+\overline{\Omega\left(\xi_j,\frac{\mu}{N}\right)}\right).
\end{align*}
We can now use the fact that $\xi_j=j/J$ and the identity
\begin{align}\label{eqn:trig_identity}
    \frac{1}{J}\sum_{j=1}^J\exp(2\pi i m j/J)=\begin{cases}1,&\quad m\equiv 0 (J)\\
    0,&\quad m\not\equiv 0 (J)
    \end{cases}
\end{align}
to further simplify the above expressions:
\begin{align}\nonumber
    D(y)&=1+\frac{1}{J}\sum_{j=1}^J\left|\Omega\left(\xi_j,y\right)\right|^2+2\Re\left(\frac{1}{J}\sum_{j=1}^J\Omega\left(\xi_j,y\right)\right)\\\nonumber
    &=1+\frac{1}{J}\sum_{j=1}^J\left|\Omega\left(\xi_j,y\right)\right|^2+2|y|^{{d+1}-2\alpha}\sum_{l\neq 0}\frac{1}{|lJ+y|^{{d+1}-2\alpha}}\\\label{eqn:app_proof_that_D_uniformly_bounded_away_from_0}
    &\geq 1, \quad \forall y\in[-1/2,1/2]
\end{align}
and similarly we find
\begin{align*}
    Z_N&=\sum_{\substack{n\in\mathbb{Z}\\n\neq 0}}\left[nJN\right]^{2\alpha}\hat{u}_{nJN}=\sum_{\substack{n\in\mathbb{Z}\\n\neq 0}}\left[nM\right]^{2\alpha}\hat{u}_{nM}\\
    E(y)&=|y|^{d+1-2\alpha}\sum_{l\neq0}\frac{1}{|lJ+y|^{d+1-2\alpha}}+\frac{1}{J}\sum_{j=1}^J\left|\Omega(\xi_j,y)\right|^2,
\end{align*}
where we made extensive use of the trigonometric identity \eqref{eqn:trig_identity}. Finally, we simplify the expression for $R_N(\mu)$
\begin{align*}
    R_N(\mu)&=D\left(\frac{\mu}{N}\right)^{-1}\left(\sum_{k\neq 0}\left[\frac{\mu+kM}{\mu}\right]^{2\alpha}\hat{u}_{\mu+kM}\right.\\
    &\quad+\left.\sum_{\substack{n\equiv\mu(N)\\n\neq \mu}}\left[\frac{n}{\mu}\right]^{2\alpha}\hat{u}_n\left|\frac{\mu}{N}\right|^{d+1-2\alpha}\frac{1}{J}\sum_{j=1}^J\sum_{l\neq 0}\frac{1}{|l+\mu/N|^{d+1-2\alpha}}\exp\left(2\pi i\left(\frac{n-\mu}{N}-l\right)\frac{j}{J}\right)\right)\\
   &=D\left(\frac{\mu}{N}\right)^{-1}\left(\sum_{k\neq 0}\left[\frac{\mu+kM}{\mu}\right]^{2\alpha}\hat{u}_{\mu+kM}\right.\\
   &\quad\quad\left.+\sum_{k\neq 0}\left[\frac{\mu+kN}{\mu}\right]^{2\alpha}\hat{u}_{\mu+kN}\left|\frac{\mu}{N}\right|^{d+1-2\alpha}\sum_{\substack{l\equiv k (J)\\l\neq 0}}\left|\frac{1}{l+\mu/N}\right|^{d+1-2\alpha}\right).
\end{align*}

\section{A compact perturbation argument}\label{app:compact_perturbation_argument}
Here we outline a brief overview of the compact perturbation argument referenced in Remark~\ref{rmk:modified_oversampled_col_and_compact_perturbation_argument} that is standard in the relevant literature for the analysis of collocation methods, see for instance \S3 in \cite{Arnold1985}. Suppose we proved the following apriori estimate for some $s<t$: If for some $a_n\in S_N$ and some $\tilde{a}\in H^{t}$:
\begin{align}\label{eqn:unperturbed_apriori_estimate}
    \left\langle\chi_N,V_0a_N\right\rangle_M=\left\langle\chi_N,V_0\tilde{a}\right\rangle_M\quad\forall \chi_N\in S_N\quad\text{then}\quad\|\tilde{a}-a_N\|_{s}\lesssim N^{s-t}\|\tilde{a}\|_t.
\end{align}
Under the compact perturbation $V=V_0+\mathcal{K}$ where $\mathcal{K}:H^{p}\rightarrow H^q$ is continuous for all $p,q\in\mathbb{R}$ the perturbed linear conditions become
\begin{align}\label{eqn:perturbed_linear_conditions}
    \left\langle\chi_N,(V_0+\mathcal{K})u_N\right\rangle_M=\left\langle\chi_N,(V_0+\mathcal{K})\tilde{u}\right\rangle_M\quad\forall \chi_N\in S_N.
\end{align}
The trick given by \cite{Arnold1985} is then to write \eqref{eqn:perturbed_linear_conditions} in the equivalent form
\begin{align*}
    \left\langle\chi_N,V_0u_N\right\rangle_M=\left\langle\chi_N,V_0\left(V_0^{-1}(V_0+\mathcal{K})\tilde{u}-V_0^{-1}\mathcal{K} u_N\right)\right\rangle_M\quad\forall \chi_N\in S_N,
\end{align*}
which means that by \eqref{eqn:unperturbed_apriori_estimate} we have
\begin{align*}
   \| \left(V_0^{-1}(V_0+\mathcal{K})\tilde{u}-V_0^{-1}\mathcal{K} u_N\right)-u_N\|_{s}\lesssim N^{s-t}\|V_0^{-1}(V_0+\mathcal{K})\tilde{u}-V_0^{-1}\mathcal{K} u_N\|_t.
\end{align*}
Simplifying both sides we have
\begin{align*}
     \| V_0^{-1}(V_0+\mathcal{K})\left(\tilde{u}-u_N\right)\|_{s}\lesssim N^{s-t}\|\tilde{u}+V_0^{-1}\mathcal{K}\left(\tilde{u}- u_N\right)\|_t.
\end{align*}
Now by the pseudo-differential form \eqref{eqn:pseudo_differential_form_V} $V_0$ is continuous and by assumption $V=V_0+\mathcal{K}$ is invertible, thus we have
\begin{align*}
   \|\tilde{u}-u_N\|_{s} \lesssim_s \| V_0^{-1}(V_0+\mathcal{K})\left(\tilde{u}-u_N\right)\|_{s}
\end{align*}
and by compactness of $\mathcal{K}$ and continuity of $V_0^{-1}$
\begin{align*}
    \|\tilde{u}+V_0^{-1}\mathcal{K}\left(\tilde{u}- u_N\right)\|_t\leq \|\tilde{u}\|_{t}+ C\|\tilde{u}- u_N\|_{s}.
\end{align*}
Thus we have overall
\begin{align*}
    \|\tilde{u}-u_N\|_{s} (1-CN^{s-t})\lesssim N^{s-t}\|\tilde{u}\|_{t}.
\end{align*}
Finally, since $(1-CN^{s-t})\rightarrow 1$ as $N\rightarrow\infty$, we find for some $N_0>0$ and all $N\geq N_0$
\begin{align*}
    \|\tilde{u}-u_N\|_{s} \lesssim N^{s-t}\|\tilde{u}\|_{t}.
\end{align*}
\end{appendix}
\end{document}